\documentclass[letterpaper,twoside,11pt,leqno]{article}
\usepackage{amsmath,amsthm,amssymb, enumerate}
            \newcommand{\marginalnote}[1]{}
            \addtolength{\textwidth}{0.6cm}
            \addtolength{\textheight}{1.3cm}
            \addtolength{\oddsidemargin}{0.8cm}
            \addtolength{\evensidemargin}{-1.2cm}

\usepackage[all]{xy}
\usepackage{graphicx}
\usepackage{epsfig}
\usepackage{color}	
\usepackage{ulem}	

%\swapnumbers 
\theoremstyle{plain}

\newtheorem{Thm}{Theorem}[section]
\newtheorem{Prop}[Thm]{Proposition}
\newtheorem{Lem}[Thm]{Lemma}
\newtheorem{Cor}[Thm]{Corollary}

\newtheorem*{Thm*}{Theorem}
\theoremstyle{remark}

\theoremstyle{definition}
\newtheorem{Rem}[Thm]{Remark}
\newtheorem{Ex}[Thm]{Example}

\newtheorem{Not}[Thm]{Notation}
\newtheorem{Def}[Thm]{Definition}
\newtheorem{Defs}[Thm]{Definitions}

\newtheorem{Que}[Thm]{Question}
%\numberwithin{enumerate}{equation}

\newcommand{\gen}[1]{\left\langle#1\right\rangle}

\newcommand{\gp}[2]{\gen{#1\mid #2}}

\newcommand{\Z}{\mathbb{Z}}

\newcommand{\ga}{\Gamma}
\newcommand{\word}{w}

\DeclareMathOperator{\supp}{supp}
\DeclareMathOperator{\Star}{St}
\DeclareMathOperator{\Link}{Lk}
\DeclareMathOperator{\KLink}{K-Lk}

\def\coloneqq{\mathrel{\mathop\mathchar"303A}\mkern-1.2mu=}

\begin{document}
\date{}
\author{Yago Antol\'{i}n  and Laura Ciobanu}
\title{Geodesic growth in right-angled and even Coxeter groups}
\maketitle

\begin{abstract}
The objective of this paper is to detect which combinatorial properties of a regular graph can completely determine the geodesic growth of the right-angled Coxeter or Artin group this graph defines, and to provide the first examples of right-angled and even Coxeter groups with the same geodesic growth series. 

\bigskip

\noindent 2000 Mathematics Subject Classification: 20E08, 20F65.

\noindent Key words: geodesic growth, Artin groups, Coxeter groups, regular languages.
\end{abstract}

\section{Introduction}
%Throughout the paper, the groups will always be assumed to be finitely generated.

The \textit{geodesic growth function} of a group $G$ with respect to a finite generating set $S$ counts, for each positive integer $n$, the number of geodesics of length $n$ starting at $1$ in the Cayley graph of $G$ with respect to $S$. The \textit{geodesic growth series} is the formal series that takes the values of the geodesic growth function as series coefficients (see Definition \ref{defgrowth}).

One of the first beautiful results in this area states that the geodesic growth series of a hyperbolic group, with respect to any finite generating set, is a rational function. This follows from the fact that the language of geodesics is regular \cite{Epsteinetc}. Such behaviour also appears in other important classes of groups: Artin groups of large (\cite{HoltRees}) and spherical (\cite{Charney}) type,  Coxeter groups (\cite{BjornerBrenti}), Garside groups over the Garside generators (\cite{CharneyMeier}), or orientable surface groups of finite genus (\cite{GN}), to name just a few. However, unlike standard (spherical) group growth, geodesic growth is highly sensitive to the generating set. By an example due to Cannon \cite[Remark]{neushap}, there is a group $G$ and generating sets $S$ and $S'$ such that $G$ has a regular set of geodesics with respect to $S$, and non-regular with respect to $S'$. More recently, some inroads have been made into understanding those groups that posses generating sets for which the geodesic growth is polynomial \cite{Bridsonetc}. 

 The groups that we treat in this paper, right-angled and even Coxeter and right-angled Artin groups, are known to have a regular language of geodesics, and therefore rational geodesic growth series, with respect to the standard generating sets (see \cite{JJJ} or \cite[Theorem 4.8.3]{BjornerBrenti}, for proofs of these facts). 

Our goal here is to give more qualitative information on the geodesic growth. More precisely, this paper was motivated by the following Question: 

\begin{Que}\cite[Question 1]{JJJ} Can two different right-angled Artin groups have the same geodesic growth series?\end{Que}
 The answer, stated in Corollaries \ref{cor:sameGGC} and \ref{cor:sameGGA}, is positive:

\medskip

 {\it Let $G_1$ and $G_2$ be two right-angled Artin or Coxeter groups based on link-regular graphs that have the same $f$-polynomial and the same size link for each clique of a given dimension. Then $G_1$ and $G_2$ have the same geodesic growth series.}

\medskip

This is one of our main results, and it is proved in Section \ref{MT}. There we analyze the automata that recognize the geodesic languages of the two given groups and make use of their properties to reach the criteria in the hypothesis. This result and its proof was inspired by an analogous statement for automatic growth of right-angled Coxeter groups in \cite[Theorem 14]{GS2009}, where a different definition of link-regularity is used. In Section \ref{relation} we prove that the two definitions of link-regularity are in fact equivalent. 

When one sets the additional condition that the graphs that determine the groups be triangle-free, the result also follows from employing the growth formula for languages defined by `forbidden words', as introduced in \cite{GN}. This line of proof can then be generalized to even Coxeter groups, which is the topic of Section \ref{evenCoxeter}. The rigid chains of forbidden words for triangle-free even Coxeter groups are described in Lemmas \ref{Lem:rigidsequence1} and \ref{Lem:rigidsequence2} and this characterization is used to obtain the main result of Section \ref{evenCoxeter}:

\medskip

\textbf{Theorem 6.15}.
{\it Let $(G,S)$ be an even Coxeter system with graph $\Gamma.$
Suppose that $\Gamma$ is triangle-free,  and star-regular, that is, the stars of any two vertices in $\Gamma$ are isomorphic as graphs.
Then the geodesic growth function depends only on $|S|$ and the isomorphism class of the star of the vertices.

In particular, if $(G_1,S_1)$ and $(G_2,S_2)$ are triangle-free, star-regular, even Coxeter systems with $|S_1|=|S_2|$ and $\Star(v)\cong \Star(u)$, $v\in V\Gamma_1$, $u\in V\Gamma_2$, then $G_1$ and $G_2$ have the same geodesic growth. }

\medskip

While we found sufficient conditions for two right-angled Artin or Coxeter groups to have the same geodesic growth, we have not found necessary conditions for this to happen. For example, it is not clear whether a pair of groups based on two non-isomorphic trees can have the same geodesic or same automatic growth. We discuss some of the connections between the spherical, automatic and geodesic growth of right-angled Coxeter groups in Section \ref{relation}, and give a formula for the geodesic growth of the key players in this paper, the right-angled Coxeter groups based on regular triangle-free graphs, in Section \ref{sec:formula}.

The Appendix contains a description of the geodesic words that represent elements in the centralizers of generators in even Coxeter groups. This is used in the proof of Theorem \ref{ThmevenCoxeter}, and is basically a result of Bahls and Mihalik \cite{BahlsMihalik}. 

%%%%%%%
%SECTION%
%%%%%%%

\section{Definitions and notation}

Let $S$ be a finite set and $S^*$ the free monoid on $S$. We indentify $S^*$ with the set of words over $S$, that is, finite sequences of elements of $S$, and we use $\equiv$ to denote equality of words. If $w_1, w_2$ are words over $S$, denote by $(w_1,w_2)$ the word obtained by concatenating $w_1$ and $w_2$.

Let $G=\gen{S}$ be a group generated by $S$. For an element $g$ of $G$, denote by $|g|\, (=|g|_S)$ the word length of $g$ with respect to $S$, and by $\pi_S\colon S^* \rightarrow G$ the natural projection.

A {\it Coxeter system} $(G,S)$ is a pair consisting of a group $G$ together with a distinguished generating set $S=\{s_i\}_{i\in I},$ for which there is a presentation 
$$\langle S \mid\mid (s_is_j)^{m_{i,j}}=1, \quad i,j\in I\rangle,$$
where $m_{i,j}\in \{0,1,2,3,\dots\}$ if $i\neq j$, $m_{i,i}=1$, (i.e. $s_i$ is an involution) and $m_{i,j}=m_{j,i}$. Here, we understand that $g^0=1.$   The Coxeter system is {\it even} (resp. {\it right-angled}) if the numbers $m_{i,j}$ are even for $i\neq j$ (resp. equal to 2 or $0$).
The group $G$ is called a {\it Coxeter group}, {\it even Coxeter group} or {\it right-angled Coxeter group} if there is a subset $S$ of $G$ such that $(G,S)$ is, respectively, a Coxeter system, even Coxeter system or a right-angled Coxeter system.

Given a Coxeter system $(G,S)$, let $\ga=\ga(G,S)$ be a simplicial labelled graph with vertex set $S=V\Gamma$ and edge set $E=E\Gamma$, where $\{s_i,s_j\}\in E\ga$ if  $m_{i,j}\neq0.$ The vertices are labelled by the elements of $S$ and the edges by the numbers $m_{i,j}.$ The Coxeter system $(G,S)$ is {\it triangle-free} if the graph $\ga(G,S)$ is triangle-free, that is, no three vertices of $\ga$ span a complete graph.

In the case of \textit{right-angled Coxeter groups} (racg's), we will omit the labels on the edges of $\ga$, since all of them are 2. The racg determined by $\Gamma$ is given by the presentation 
$$\langle s \in S \mid\mid s^2=1 \ \forall s\in S, \ \textrm{and}\  (ss')^2=1 \ \forall \{s, s'\} \in E \rangle.$$
If we don't require the generators to be involutions, we obtain the related \textit{right-angled Artin groups} (raag's). The raag determined by a graph $\Gamma$ is given by the presentation $$\langle s \in S \mid\mid ss'=s's   \ \forall \{s, s'\} \in E\Gamma \rangle.$$

For a Coxeter system $(G,S)$ and $\ga=\ga(G,S)$, we will use the same letters for vertices of $\ga$ and the corresponding generators.

Let $\mathcal{A}$ denote the set of non-empty subsets  $\sigma \subseteq V\ga$, with the property that the vertices of $\sigma$ span a complete subgraph or \textit{clique} of $\Gamma.$ The {\it size of } $\sigma\in \mathcal{A}$ is just $|\sigma|$.
For any $\sigma \in \mathcal{A}$,  we let $\Link(\sigma)$ denote the vertices in $V\Gamma\setminus \sigma$ that are connected with every vertex in $\sigma$. That is, 
$$\Link(\sigma)=\{v \in V\Gamma \setminus \sigma : \{v\} \cup \sigma \in \mathcal{A}\},$$
and we let $\Star(\sigma)$ denote the vertices in $\Gamma$ that are connected with every vertex in $\sigma$. That is, 
$$\Star(\sigma)=\{v \in V\Gamma : \{v\} \cup \sigma \in \mathcal{A}\}.$$

\begin{Defs} \label{defgrowth}
Let $G$ be a group generated by $S$ and let $\pi\colon S^*\to G$ be the natural projection. We define the following.
\begin{enumerate}
\item The \textit{geodesic growth function} $\gamma_{(G,S)}:\mathbb{N}\rightarrow \mathbb{N}$ is given by 
$$\gamma_{(G,S)}(r)=|\{w\in S^* : |w|=|\pi(w)|=r\}|,$$
and the \textit{geodesic growth series} of $G$ is given by $$\mathcal{G}_{(G,S)}(z)=\sum_{i=0}^\infty \gamma_{(G,S)}(i)z^i.$$ 
%\item The \textit{geodesic language} of the group $G$ is denoted by  $\mathcal{G}=\mathcal{G}_G$.
\item The \textit{spherical growth function} $\sigma_{(G,S)}:\mathbb{N}\rightarrow \mathbb{N}$ is given by 
$$\sigma_{(G,S)}(r)=|\{g\in G : |g|=r\}|,$$
and the \textit{spherical growth series} of $G$ is given by $$\Sigma_{(G,S)}(z)=\Sigma_S(z)=\sum_{i=0}^\infty \sigma_{(G,S)}(i)z^i.$$ 
\end{enumerate}
More generally, let $\mathcal{L}$ be a language over a finite alphabet $S$. 
By $\mathcal{L}(z)$ we denote the characteristic  formal power series of the language
$$\mathcal{L}(z)=\sum_{w\in \mathcal{L}}z^{|w|},$$
where $|w|$ is the length of $w$ as a word on $S$.\end{Defs}
 
We recall that the {\it $f$-polynomial} of a graph $\Gamma$ is the generating function for the number of cliques of size $i+1$ in $\Gamma$: that is, $f(t)=1+f_0t+f_1t^2+ \dots $, where $f_i$ is the number of $(i+1)$-cliques in $\Gamma$. The $f$-polynomial of a simplicial complex $K$ is the generating function for the numbers of $i$-dimensional simplices of $K$: that is,  $f(t)=1+f_0t+f_1t^2+ \dots $, where $f_i$ is the number of $i$-simplices in $K$.

\section{Main Theorem}\label{MT}

\begin{Def} \label{link-regular}
A graph $\Gamma$ is called \textit{link-regular} if for all $\sigma \in \mathcal{A}$, $|\Link(\sigma)|$ depends on $|\sigma|$ and not on $\sigma$ itself. If $\ga$ is link-regular, we will write $|\Link(i)|$ to denote $|\Link(\sigma)|$ with $|\sigma|=i$.
\end{Def}

Now denote by $\mathcal{D}$ the deterministic finite automaton accepting the geodesic language of a right-angled Coxeter group, as described in \cite{JJJ}. See \cite{hu} for definitions and more information on languages and automata.

\begin{Prop}\label{prop:automaton}
Let $G$ be a right-angled Coxeter group based on a simplicial graph $\Gamma$. Then $\mathcal{D}$ has states $\mathcal{S}=\mathcal{A}\cup \{\emptyset, \rho\}$, where $\emptyset$ is the start state, and $\rho$ is a single fail state. The accept states are $\mathcal{A}\cup \{\emptyset \}$ and the transition function $\mu:\mathcal{S} \times V\Gamma \rightarrow \mathcal{S}$ is given by
\begin{enumerate}
\item $\mu(\sigma, v)=\{v\} \cup (\Star(v)\cap \sigma)$ for all $\sigma  \in \mathcal{A}\cup \{\emptyset\}$ and $v \notin \sigma$ and
\item $\mu(\sigma,v)=\rho$ otherwise.
\end{enumerate} 
\end{Prop}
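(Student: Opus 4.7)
The plan is to identify the state reached after reading a word $w \in S^*$ with an explicit subset $\sigma(w) \subseteq V\Gamma$ (or the fail symbol $\rho$), and then verify that this identification is consistent with the transition rule in the statement. Concretely, set $\sigma(w) = \rho$ if $w$ is not a geodesic and otherwise let $\sigma(w)$ be the set of generators $v \in V\Gamma$ such that $v$ occurs in $w$ and every letter of $w$ appearing after the last occurrence of $v$ commutes with $v$ in $G$. In particular $\sigma(\varepsilon) = \emptyset$.

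First I would check that when $w$ is a geodesic, $\sigma(w)$ lies in $\mathcal{A} \cup \{\emptyset\}$. Indeed, if $u, v \in \sigma(w)$ are distinct with last occurrences at positions $i < j$, then by definition the letter at position $j$, which is $v$, commutes with $u$, so $\{u, v\} \in E\Gamma$; pairwise commutativity forces the vertices of $\sigma(w)$ to span a clique.

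The main ingredient is the classical characterization of geodesics in a right-angled Coxeter group, proved in \cite{JJJ}: a word $w$ fails to be geodesic precisely when it contains two occurrences of a single generator $v$ such that every letter between them commutes with $v$, in which case one slides the second $v$ back to meet the first and cancels them via $v^2 = 1$. Granted this, verifying the two transitions becomes a direct check. If $w$ is a geodesic and $v \in \sigma(w)$, then $w$ already contains $v$ followed only by letters commuting with $v$; appending $v$ creates a forbidden pair, so $wv$ is not a geodesic and the target state is correctly $\rho$. If $v \notin \sigma(w)$, the same characterization shows that no forbidden pair is introduced by appending $v$, so $wv$ remains a geodesic; moreover $v$ itself belongs to $\sigma(wv)$, while an element $u \in \sigma(w)$ survives in $\sigma(wv)$ exactly when the newly appended $v$ commutes with $u$, that is, when $u \in \Star(v)$. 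This yields $\sigma(wv) = \{v\} \cup (\Star(v) \cap \sigma(w))$, matching clause (1). That the resulting set is still a clique is automatic: $v$ is joined to every vertex of $\Star(v) \cap \sigma(w)$, and that set is already a clique inside $\sigma(w)$.

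With this set-up in place, the accept states are precisely those $\sigma(w)$ reached by geodesic prefixes $w$, namely $\mathcal{A} \cup \{\emptyset\}$, while $\rho$ acts as an absorbing sink corresponding to non-geodesics. The only non-routine content is the geodesic characterization recalled above; everything else is bookkeeping on how the set of "still-active" last occurrences evolves when a letter is appended, and I expect this tracking, rather than any deeper combinatorial obstacle, to be the most delicate step of the write-up.
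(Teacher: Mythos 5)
Your argument is correct, but it proves the proposition by a genuinely different route from the paper. The paper takes $\mathcal{D}$ literally as the product of the two-accept-state automata $\widehat{F_i}$ from the construction in \cite{JJJ}, identifies each non-fail $n$-tuple $(p_1,\dots,p_n)$ with the subset $\{v_i : p_i = v_i\}$, reads off the transition function from the product structure, and observes that tuples not corresponding to cliques are inaccessible. You instead build the automaton from scratch: you define the state $\sigma(w)$ of a geodesic $w$ intrinsically as the set of generators whose last occurrence is followed only by commuting letters, and verify the transition rule using the shuffle/deletion characterization of geodesics in a racg (non-geodesic iff some $v\cdots v$ with everything between in $\Star(v)$). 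Your approach is more self-contained and arguably more illuminating — it explains \emph{why} the states are cliques and why the update rule is $\{v\}\cup(\Star(v)\cap\sigma)$ — at the cost of importing the geodesic characterization as a black box and of not literally verifying that your automaton coincides with the one "as described in \cite{JJJ}" that the proposition refers to; since everything downstream in the paper uses only the structure asserted in the proposition, this mismatch is harmless, but a careful write-up should either add the one-line identification with the product automaton or rephrase the claim as the existence of a DFA with the stated states and transitions accepting the geodesic language. Two small points to make explicit: every $\sigma\in\mathcal{A}$ is actually reached (read the vertices of the clique in any order), and in the forbidden-pair analysis the inner occurrence of $v$ cannot itself be $v$ by minimality, so the relevant first occurrence is indeed the last occurrence of $v$ in $w$.
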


\begin{proof}

We rely on the notation in the proof of \cite[Theorem 1]{JJJ}. Suppose that $|V\Gamma|=n$ and that $G$ has generators $\{s_1, \dots, s_n\}$. For each $v_i\in V\Gamma$ (we exceptionally distinguish between generators and vertices in this case in order to make the exposition clearer) we consider the automaton $F_i$ recognizing the geodesic language of the group $(G_i, A_i)$, where $G_i=\langle s_i \mid s_i^2=1 \rangle$ and $A_i=\{s_i\}$, consists of two accepting states: the start state $1_i$, $v_i$, and one fail state $\rho_i$. There is only one transition between the accepting states: an $s_i$-edge from $1_i$ to $v_i$. The modification $\widehat{F_i}$ of $F_i$ has the same failing state, and the same two accepting states and the $s_i$-edge between them, as well as $s_j$-loops at $1_i$ for all $j \neq i$,  $s_j$-loops at $v_i$ for all $[v_j, v_i]=1$, and $s_j$-edges from $v_i$ to $1_i$ for all $[v_j, v_i]\neq1$. 

The automaton $\mathcal{D}$ is the product of the automata $\widehat{F_i}$, $i=1,\dots, n$. The states of $\mathcal{D}$ are $n$-tuples $(p_1,\dots, p_n)$, where $p_i$ is a state in $\widehat{F_i}$, the start state corresponds to $(1_1, \dots, 1_n)$, the failing states correspond to those $n$-tuples that contain a  coordinate with a failing state, and finally, there is an $s_j$-edge from $(p_1,\dots, p_n)$ to $(q_1,\dots, q_n)$ if there is an $s_j$-edge from $p_i$ to $q_i$ in $\widehat{F}_i$ for every $i$.

We now show that $\mathcal{D}$ coincides with the automaton described in the proposition. The non-fail states of $\mathcal{D}$ are by definition $n$-tuples $(p_1, \dots, p_n)$, where $p_i \in F_i\setminus \rho_i$, $1\leq i \leq n$, that is $p_i$ is either $1_i$ or $v_i$.  Then, a non-fail state is codified by a subset $\sigma(p_1,\dots,p_n)$ of $V\Gamma$ given by $\sigma(p_1,\dots,p_n)=\{v_i\in V\Gamma : p_i=v_i\}$, with $\emptyset$ corresponding to $(1_1, \dots, 1_n)$.

By the definition of $\mathcal{D}$ one can check that $\mu(\sigma, v)=\{v\} \cup (\Star(v)\cap \sigma)$ if $v \notin \sigma$ and $\mu(\sigma,v)=\rho$ otherwise. Thus all transitions starting at state corresponding to an element $\sigma$ of $\mathcal{A}$ end in a state corresponding to $\{v\} \cup (\Star(v)\cap \sigma)$, and as  all the vertices of $\Star(v)\cap \sigma$  commute with $v$, this state corresponds to an element of $\mathcal{A}$ as well. Thus if an $n$-tuple $(p_1, \dots, p_n)$ does not correspond to an element of $\mathcal{A}$, there is no path from $\emptyset$ to $(p_1, \dots, p_n)$, so these tuples correspond to states that are in fact not accessible.
\end{proof}

\begin{Defs}
Let $\sigma\in \mathcal{A}$, $\tau\subset \sigma$ and $j\in \mathbb{N}$.
We define  $$\Link_j(\sigma)=\{v\in V\Gamma\setminus \sigma: |\Star(v)\cap \sigma|=j\},$$ i.e. the set of vertices in $\Gamma \setminus \sigma$ that commute with exactly $j$ vertices in $\sigma$, and $$\Link_{\tau}(\sigma)=\{v \in \Gamma \setminus \sigma : \Star(v)\cap\sigma =\tau \}$$ i.e. the set of vertices outside of $\sigma$ that commute with all the vertices in $\tau$ and no vertex in $\sigma\setminus \tau$.

Let $\deg_{\tau}(\sigma)=|\Link_{\tau}(\sigma)|$ and $\deg_j(\sigma)=|\Link_j(\sigma)|$.
\end{Defs}

\begin{Lem}\label{lem:degj}
Let $G$ be a right-angled Coxeter group based on a link-regular simplicial graph $\Gamma$. 

The value $\deg_j(\sigma)$, $\sigma \in \mathcal{A}$, $j< |\sigma|$,  depends on $|\sigma|$, $j$ and the values $|\Link(k)|$ for $k=1,2,\dots$, and not on $\sigma$ itself.
\end{Lem}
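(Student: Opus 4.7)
The plan is to reduce $\deg_j(\sigma)$ to a sum of ``finer'' quantities $\deg_\tau(\sigma)$, one for each $j$-subset of $\sigma$, and then compute each $\deg_\tau(\sigma)$ by inclusion--exclusion. Since a vertex $v\in V\Gamma\setminus\sigma$ satisfies $|\Star(v)\cap\sigma|=j$ if and only if $\Star(v)\cap\sigma=\tau$ for exactly one $j$-subset $\tau\subseteq\sigma$, the sets $\Link_\tau(\sigma)$ with $|\tau|=j$ partition the set counted by $\deg_j(\sigma)$. Thus
$$\deg_j(\sigma)=\sum_{\tau\subseteq\sigma,\ |\tau|=j}\deg_\tau(\sigma),$$
and it suffices to prove that $\deg_\tau(\sigma)$ depends only on $j=|\tau|$, $|\sigma|$ and the numbers $|\Link(k)|$. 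Summing afterwards over the $\binom{|\sigma|}{j}$ choices of $\tau$ gives the lemma.

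For $\tau\subseteq\tau'\subseteq\sigma$, let $N_{\tau'}=|\{v\in V\Gamma\setminus\sigma:\tau'\subseteq\Star(v)\}|$. The plan is to express $N_{\tau'}$ in terms of $|\Link(\tau')|$. A vertex $v\notin\tau'$ satisfies $\tau'\subseteq\Star(v)$ exactly when $v\in\Link(\tau')$. The key observation is that since $\sigma\in\mathcal{A}$ is a clique, every vertex of $\sigma\setminus\tau'$ automatically lies in $\Link(\tau')$; hence removing the $|\sigma|-|\tau'|$ vertices of $\sigma\setminus\tau'$ from $\Link(\tau')$ yields exactly those vertices counted by $N_{\tau'}$, i.e.
$$N_{\tau'}=|\Link(\tau')|-(|\sigma|-|\tau'|).$$
By link-regularity, $|\Link(\tau')|=|\Link(|\tau'|)|$, so $N_{\tau'}$ depends only on $|\tau'|$, $|\sigma|$ and the values $|\Link(k)|$.

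Finally, $\deg_\tau(\sigma)$ is obtained from the $N_{\tau'}$ via standard Möbius inversion on the Boolean lattice of subsets of $\sigma$ containing $\tau$:
$$\deg_\tau(\sigma)=\sum_{\tau\subseteq\tau'\subseteq\sigma}(-1)^{|\tau'|-|\tau|}N_{\tau'}=\sum_{k=j}^{|\sigma|}(-1)^{k-j}\binom{|\sigma|-j}{k-j}\bigl(|\Link(k)|-(|\sigma|-k)\bigr),$$
where in the last equality we have grouped the $\tau'\supseteq\tau$ by their cardinality $k$ and used the previous paragraph. This expression manifestly depends only on $j$, $|\sigma|$, and the numbers $|\Link(k)|$, and the proof is complete. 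There is no genuine obstacle: the only subtle point is the clique observation that $\sigma\setminus\tau'\subseteq\Link(\tau')$, which is what makes $N_{\tau'}$ a simple affine function of $|\Link(|\tau'|)|$ and thus keeps the inclusion--exclusion sum intrinsic to $(j,|\sigma|,|\Link(k)|)$.
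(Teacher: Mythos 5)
Your proof is correct and follows essentially the same route as the paper: both partition $\Link_j(\sigma)$ into the sets $\Link_\tau(\sigma)$ with $|\tau|=j$, and both rest on the observation that, $\sigma$ being a clique, $\Link(\tau')\cap\sigma=\sigma\setminus\tau'$, so that $|\Link(\tau')|-(|\sigma|-|\tau'|)$ counts the vertices $v\notin\sigma$ with $\tau'\subseteq\Star(v)\cap\sigma$. The only difference is presentational: the paper solves the resulting triangular system $N_{\tau'}=\sum_{\tau'\subseteq\pi\subseteq\sigma}\deg_\pi(\sigma)$ by induction on $|\sigma|-|\tau'|$, whereas you invert it explicitly by M\"obius inversion, obtaining the same conclusion in closed form.
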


\begin{proof}
Let $\sigma \in \mathcal{A}$ and  $i=|\sigma|$.

\begin{equation}\label{eq:disjoint}
\text{If }\tau_1,\tau_2\subseteq \sigma, \, \tau_1\neq\tau_2 \text{ then } \Link_{\tau_1}(\sigma) \cap \Link_{\tau_2}(\sigma)=\emptyset.
\end{equation}
Indeed, without loss of generality assume that $|\tau_1|\leq|\tau_2|$, If $v \in \Link_{\tau_1}(\sigma) \cap \Link_{\tau_2}(\sigma)$, then $v$ commutes with at least one more vertex from $\sigma$ than those in $\tau_1$, which gives a contradiction.

It follows from the definition of $\Link_j$ and \eqref{eq:disjoint} that $\Link_j(\sigma)$ is the following disjoint union
\begin{equation}\label{eq:union}
\Link_j(\sigma)=\dot{\bigcup}_{\tau \subseteq \sigma, |\tau|=j}\Link_{\tau}(\sigma).
\end{equation}

We have that if $\tau \subset \sigma$, \begin{equation} \label{eq:linktau} \Link(\tau)=(\sigma\setminus \tau) \dot{\cup} (\dot{\bigcup}_{\tau \subset \pi \subseteq \sigma} \Link_{\pi}(\sigma)).\end{equation}
It is immediate to check that all the elements in the right-hand side of the equation  \eqref{eq:linktau} lie in $\Link(\tau)$. It only remains to check that every $v\in \Link(\tau)$ either lies in $(\sigma\setminus \tau)$ or in $\Link_{\pi}(\sigma)$, for some $\tau\subseteq \pi\subseteq \sigma$.  If $v\notin \sigma$,   let $\pi=\Star(v)\cap \sigma$ and notice that since $v\in \Link(\tau)$, $\tau\subset \pi$. Thus $v\in \Link_\pi (\sigma).$ The fact that the unions are disjoint follows from \eqref{eq:disjoint}. 

Now let $f(i,j)$ be the number of $j$-cliques in an $i$-clique, where $j \leq i$.

We claim that $\Link_{\tau}(\sigma)$ depends only on $i$ and $j$ and the values of $|\Link(k)|$.
We prove the claim by induction on $i-j$, i.e.  for any $\tau \subset \sigma$ of size  $j$ the size of  $\Link_{\tau}(\sigma)$ depends only on $i$ and $j$ and the values of $|\Link(k)|$, and not on $\sigma$ and $\tau$. For $i-j=1$, by  \eqref{eq:linktau}, we have $\Link_{\tau}(\sigma)=(\Link(\tau)\setminus \sigma)\setminus \Link(\sigma) $, where $|\tau|=i-1$. Thus $\deg_{\tau}(\sigma)=|\Link_{\tau}(\sigma)|=(|\Link(i-1)|-1)- |\Link(i)|$ depends only on $i$ and $j=i-1$.

In general, by  \eqref{eq:linktau}, we have $\Link_{\tau}(\sigma)=\Link(\tau) \setminus ((\sigma\setminus \tau)  \cup(\dot{\bigcup}_{\tau \subset \pi \subseteq \sigma} \Link_{\pi}(\sigma))), $ and since the unions in \eqref{eq:linktau} are disjoint,   
 \begin{equation}\label{eq:sum}
 \deg_{\tau}(\sigma)=|\Link(\tau)|-|\sigma\setminus \tau|-\sum_{\tau \subset \pi \subseteq\sigma}|\Link_\pi(\sigma)|.
 \end{equation}
So we can assume that $i-j> 1$ and that  $\deg_{\tau'}(\sigma')$, $\tau'\subseteq \sigma'$ only depends of $|\tau'|$ and $|\sigma'|$ and the values of $|\Link(k)|$ if $|\sigma'|-|\tau'|<i-j$. 
So in \eqref{eq:sum}, $|\Link_{\pi}(\sigma)|$ depends only on $|\pi|>j$ and $i$ and the values of $|\Link(k)|$, the number of  $\pi\subset \sigma$ of a given size $j'$ is $f(i,j')$, which only depends on $i$ and $j'$. We obtain that  $\deg_{\tau}(\sigma)=|\Link_{\tau}(\sigma)|$ depends on $j$, $i$ and the values of $|\Link(k)|$ only. 
This completes the proof of the claim.

We complete the proof of the Lemma by observing that, by \eqref{eq:union}, $\deg_j(\sigma)=f(i,j)\deg_{\tau}(\sigma)$. Thus $\deg_j(\sigma)$ only depends on $j$, $|\sigma|$ and the values $|\Link(k)|$.  \end{proof}

\begin{Def}
Let $G$ be right-angled Coxeter group based on $\ga$ and  $\mathcal{D}$ be the associated automaton described in  Propositon \ref{prop:automaton}. We say that a state $\sigma\in \mathcal{A}$ of $\mathcal{D}$ is an $i$-state if $|\sigma|=i$. In particular there is only one $0$-state, which corresponds to the starting state.
\end{Def}
\begin{Lem}\label{lem:transj}
Fix  $\sigma \in \mathcal{A}$  and consider the corresponding state in $\mathcal{D}$. The number of transitions (edges) from $\sigma$ to $(j+1)$-states is $\deg_j(\sigma)$. 
\end{Lem}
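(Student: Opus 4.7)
The plan is to read off the count directly from the transition rule for $\mathcal{D}$ given in Proposition \ref{prop:automaton}. The outgoing edges from $\sigma$ are labelled by the letters $v\in V\Gamma$, and distinct labels give distinct edges, so it suffices to count the labels $v$ for which $\mu(\sigma,v)$ is a state of size $j+1$.

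First I would dispose of the labels that send $\sigma$ to the fail state: by part (2) of Proposition \ref{prop:automaton}, $\mu(\sigma,v)=\rho$ whenever $v\in\sigma$, and $\rho$ is not a $(j+1)$-state. So the edges landing on non-fail states are exactly those labelled by vertices $v\in V\Gamma\setminus\sigma$, and for each such $v$ part (1) gives $\mu(\sigma,v)=\{v\}\cup(\Star(v)\cap\sigma)$. Since $v\notin\sigma$, this set has size $1+|\Star(v)\cap\sigma|$, and the proof of Proposition \ref{prop:automaton} already guarantees that it lies in $\mathcal{A}$ (the new vertex $v$ is joined to every vertex of $\Star(v)\cap\sigma$, and $\sigma\in\mathcal{A}$ ensures the remaining adjacencies).

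The condition $|\mu(\sigma,v)|=j+1$ therefore becomes $|\Star(v)\cap\sigma|=j$, which is exactly the defining condition for $v\in\Link_j(\sigma)$. Hence the map $v\mapsto (\text{edge labelled }v)$ is a bijection between $\Link_j(\sigma)$ and the edges out of $\sigma$ that end at $(j+1)$-states, giving the count $\deg_j(\sigma)=|\Link_j(\sigma)|$.

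There is no real obstacle: the statement is just an unpacking of the transition rule in Proposition \ref{prop:automaton} combined with the definition of $\Link_j(\sigma)$. The only minor point to keep in mind is to split the outgoing labels into those in $\sigma$ (which go to $\rho$) and those outside $\sigma$ (which produce the states counted above), so that one does not accidentally include or exclude transitions to the fail state.
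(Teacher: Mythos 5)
Your proof is correct and follows exactly the paper's argument: read off the transition rule from Proposition \ref{prop:automaton}, observe that for $v\notin\sigma$ the target state $\{v\}\cup(\Star(v)\cap\sigma)$ has size $j+1$ precisely when $v\in\Link_j(\sigma)$, and count. The extra care you take to exclude the labels $v\in\sigma$ (which go to the fail state) is a small completeness point the paper leaves implicit.
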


\begin{proof}
By Proposition \ref{prop:automaton} (1), the transition function is given by $\mu(\sigma,v)=\{v\}\cup (\Star(v)\cap\sigma)$ for $v\notin \sigma$. Then $\mu(\sigma,v)$ is a $(j+1)$-state if and only if $\Star(v)\cap\sigma$ has size $j$, which is equivalent to $v\in \Link_j(\sigma)$. Then there are exactly $\deg_j(\sigma)$ transition edges from $\sigma$ to $j$-states. 
\end{proof}

The following is an analog of Theorem 14, \cite{GS2009}.
\begin{Thm} \label{Thm:GGC}
Let $(G,S)$ be a right-angled Coxeter system with link-regular defining graph $\ga$. The geodesic growth of $G$ only depends on the $f$-polynomial of $\ga$ and the values $|\Link(\sigma)|,$  $\sigma \in \mathcal{A}(G,S)$.
\end{Thm}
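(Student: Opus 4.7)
The plan is to analyze the automaton $\mathcal{D}$ from Proposition \ref{prop:automaton} by a transfer-matrix argument that collapses all $k$-states of $\mathcal{D}$ into a single ``super-state'' for each $k\geq 0$. Since every non-fail state of $\mathcal{D}$ is accepting, $\gamma_{(G,S)}(n)$ is simply the total number of length-$n$ walks in $\mathcal{D}$ starting at $\emptyset$. For $k\geq 0$, let $p_k(n)$ denote the number of such walks terminating at some $k$-state; then $\gamma_{(G,S)}(n)=\sum_k p_k(n)$.

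To justify the lumping one must verify that, for any $k$-state $\sigma$, the number of outgoing transitions from $\sigma$ to the set of all $j$-states depends only on $k$ and $j$, not on $\sigma$ itself. Lemma \ref{lem:transj} identifies this count with $\deg_{j-1}(\sigma)$. By Lemma \ref{lem:degj}, whenever $j-1<k$ the quantity $\deg_{j-1}(\sigma)$ depends only on $k$, $j$, and the values $|\Link(i)|$. For $j=k+1$ the count equals $|\Link(\sigma)|=|\Link(k)|$ by link-regularity, and for $k=0$ a direct inspection of $\mu(\emptyset,\cdot)$ shows that all outgoing transitions go to $1$-states, with exactly $|V\ga|=f_0$ of them. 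Denote the resulting common value by $D(k,j)$; splitting each walk of length $n+1$ into a walk of length $n$ plus a single transition then gives
\[
p_j(n+1)=\sum_{k\geq 0} D(k,j)\,p_k(n),\qquad p_0(0)=1,\ \ p_k(0)=0\ \text{for}\ k\geq 1.
\]
The sum is effectively finite since the maximum clique size of $\ga$ is bounded.

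Because the matrix $D$ is entirely determined by $f_0$ together with the values $|\Link(\sigma)|$ for $\sigma\in\mathcal{A}$, so are the generating series $P_k(z)=\sum_n p_k(n)z^n$, and hence so is $\mathcal{G}_{(G,S)}(z)=\sum_k P_k(z)$. Since the $f$-polynomial encodes $f_0$ as its linear coefficient, the theorem follows. The one point requiring real care is the state-independence of $D(k,j)$, but this is precisely what Lemmas \ref{lem:degj} and \ref{lem:transj} deliver under link-regularity; the remaining work is merely packaging them into a linear recursion and invoking the standard rational-series calculus.
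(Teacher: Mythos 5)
Your proposal is correct and follows essentially the same route as the paper: lump the states of $\mathcal{D}$ by clique size, use Lemmas \ref{lem:transj} and \ref{lem:degj} (plus link-regularity for the top case $j=k+1$) to see the transition counts are state-independent, and run the resulting linear recursion. The only cosmetic difference is that you seed the recursion solely with $p_0(0)=1$ and the value $f_0=|V\ga|$, whereas the paper reads off the base cases $B_i(i)=i!\,f_{i-1}$ from the $f$-polynomial; both are fine, since for a link-regular graph the $f$-polynomial is itself recoverable from $f_0$ and the $|\Link(k)|$.
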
 

\begin{proof}
Since $\ga$ is link-regular, and in view of Lemma \ref{lem:degj}, we can write $\deg_j(i)$ for $\deg_j(\sigma)$ and $|\Link(i)|$ for $|\Link(\sigma)|$, $|\sigma|=i$.

Let  $\mathcal{D}$ be the deterministic automaton recognizing the language of geodesics $\mathcal{G}$ of $G$. Let $d=\max\{|\sigma|:\sigma\in\mathcal{A}\}$. Then for each $i \geq 0,$ let $B_i(m)$, $m \geq 0$ denote the number of words of length $m$, with respect to $S$, that are accepted by an $i$-state (the starting state is a $0$-state). The geodesic growth series can be written as the double sum
$$\mathcal{G}(z)=1 + \sum_{m=1}^{\infty}\sum_{i=1}^{d} B_i(m)z^m.$$

The theorem follows if we show that  $B_i(m)$ depends only on the $f$-polynomial and the values of $|\Link(\sigma)|$ for $\sigma \in \mathcal{A}$.  We will argue by induction on $m$.
Notice that for $m< i$, $B_i(m)$ is zero and $B_i(i)/i!$ is the number of $i$-cliques, which is the information contained in the $f$-polynomial. In particular, this shows that for $m\leq 1$, $B_i(m)$, $i=0,\dots, d$ only depend on the $f$-polynomial (and trivially on the values $|\Link(i)|$).

Suppose that $m\geq 2$. We form a recursion for $B_i(m)$ as follows. Any word $w$ of length $m$ that is accepted by an $i$-state $\sigma$  is obtained by multiplying a word $w'$ of length $m-1$ by some generator in $V\Gamma$. Given $w'$ of length $m-1$ accepted by the $j$-state $\tau$, let $\beta_i(\tau)$ be the number of transitions from $\tau$ to an $i$-state. First notice that $i\leq j+1$. 

By Lemma \ref{lem:transj} $\beta_i(\tau)=\deg_{(i-1)}(\tau)$. By Lemma \ref{lem:degj}, $\deg_{(i-1)}(\tau)$ only depends on  $j=|\tau|$ and the values of $\Link(k)$, $k=0,\dots, d$. We write $\beta_{i,j}$ for $\beta_i(\tau)$. 

%Given $w' \in \mathcal{G}$ of length $m-1$ ending in $\tau$, let $\alpha_i(\tau)$ be the number of $i$-simplices $\sigma$, which correspond to states in $\mathcal{AD}$, such that $\tau \sigma \in \mathcal{G}$. 

 One then has the recurrence
$$B_i(m)=\beta_{i,0}B_0(m-1) + \beta_{i,1}B_1(m-1)+ \dots + \beta_{i,i-1}B_{i-1}(m-1),$$ for $m \geq 0$. 

By the induction hypothesis, $B_i(m-1)$ only depend on the $f$-polynomial and $|\Link(i)|$, $0\leq i \leq d$, and thus the statement also follows for $B_i(m)$.
\end{proof}

\begin{Cor} \label{cor:sameGGC}
Let $\ga_1$ and $\ga_2$ be two graphs with the same $f$-polynomial, such that $|\Link(\sigma)|=|\Link(\sigma')|$ for all $\sigma \in \mathcal{A}(\ga_1)$ and $\sigma' \in \mathcal{A}(\ga_2)$ of the same size. Then the corresponding right-angled Coxeter groups $G_1$ and $G_2$ based on $\ga_1$ and $\ga_2$ have the same geodesic growth.
\end{Cor}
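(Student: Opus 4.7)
The plan is to reduce the corollary directly to Theorem \ref{Thm:GGC}, since the theorem has already established that the geodesic growth series of a right-angled Coxeter group on a link-regular graph is an explicit function of two pieces of data: the $f$-polynomial of the graph and the sequence $|\Link(i)|$, $i \geq 1$. The corollary is essentially a repackaging of that statement as a comparison result between two groups.

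First I would observe that the hypothesis of the corollary forces both $\ga_1$ and $\ga_2$ to be link-regular in the sense of Definition \ref{link-regular}. Indeed, the condition that $|\Link(\sigma)| = |\Link(\sigma')|$ whenever $|\sigma| = |\sigma'|$ (even when $\sigma$ and $\sigma'$ are taken in \emph{different} graphs) in particular forces $|\Link(\sigma)|$ to depend only on $|\sigma|$ within each individual graph. So we get common well-defined numbers $|\Link(i)|$, independent of whether we compute them in $\ga_1$ or $\ga_2$.

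Next I would apply Theorem \ref{Thm:GGC} to each of $(G_1, S_1)$ and $(G_2, S_2)$ separately. The theorem guarantees that $\mathcal{G}_{(G_j,S_j)}(z)$ is determined by the $f$-polynomial of $\ga_j$ together with the numbers $|\Link(i)|$ for $i = 1, \ldots, d$, where $d$ is the maximum clique size (which is also encoded in the $f$-polynomial, as the degree of the common $f$-polynomial plus one). Since both the $f$-polynomial and the sequence $|\Link(i)|$ are, by hypothesis, shared between $\ga_1$ and $\ga_2$, we conclude that $\mathcal{G}_{(G_1,S_1)}(z) = \mathcal{G}_{(G_2,S_2)}(z)$.

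There is no real obstacle here: the entire content of the corollary is already in Theorem \ref{Thm:GGC}, and the proof amounts to checking that the hypotheses of the corollary supply exactly the input data required by the theorem. The only minor point worth stating carefully is that link-regularity of each graph follows from the stronger cross-graph hypothesis, which is immediate from Definition \ref{link-regular}.
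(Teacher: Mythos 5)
Your proposal is correct and is exactly the argument the paper intends: the corollary is stated without proof as an immediate consequence of Theorem \ref{Thm:GGC}, and your reduction (noting that the cross-graph hypothesis forces link-regularity of each graph and supplies common values of the $f$-polynomial and $|\Link(i)|$) is the standard way to deduce it.
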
 

To show the right-angled Artin version of the previous Corollary we need the following definiton.
\begin{Def}
The \textit{double} of a graph $\ga$ has two vertices $v^{(1)}$ and $v^{(2)}$ for each vertex $v$ of $\ga$, and if $v$ and $u$ are joined by an edge in $\ga$, then each of $v^{(1)}$ and $v^{(2)}$ is joined to each of $u^{(1)}$ and $u^{(2)}$.
\end{Def}
See Figure \ref{fig:double} for an example.
\begin{figure}[ht]

\begin{center}
\includegraphics{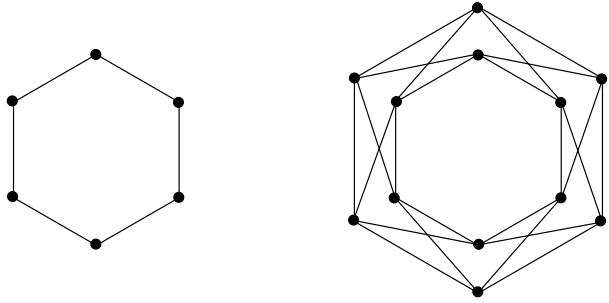}

\caption{The hexagon and the double of the hexagon.}\label{fig:double}
\end{center}
\end{figure}

\begin{Cor} \label{cor:sameGGA}
Let $\ga_1$ and $\ga_2$ be two graphs with the same $f$-polynomial, such that $|\Link(\sigma)|=|\Link(\sigma')|$ for all $\sigma \in \mathcal{A}(\ga_1)$ and $\sigma' \in \mathcal{A}(\ga_2)$ of the same size. Then the corresponding right-angled Artin groups $G_1$ and $G_2$ based on $\ga_1$ and $\ga_2$ have the same geodesic growth.
\end{Cor}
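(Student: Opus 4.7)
The plan is to reduce the right-angled Artin case to Corollary~\ref{cor:sameGGC} via the doubling construction. The strategy is in three steps: (i) show that for every graph $\ga$ the right-angled Artin group on $\ga$ (with symmetric generating set) and the right-angled Coxeter group on the double $D(\ga)$ have the same geodesic growth series; (ii) check that the hypotheses on $\ga_1, \ga_2$ transfer to the corresponding hypotheses on $D(\ga_1), D(\ga_2)$; (iii) apply Corollary~\ref{cor:sameGGC} to the doubles and transport equality of geodesic growth back via (i).

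For step (i), identify the generating set $V\ga\cup V\ga^{-1}$ of the raag with the generating set $VD(\ga)$ of the racg via $v\leftrightarrow v^{(1)}$ and $v^{-1}\leftrightarrow v^{(2)}$; both alphabets have size $2|V\ga|$. A length-$m$ word over the common alphabet has an underlying \emph{shape} $(v_{j_1},\dots,v_{j_m})\in (V\ga)^m$, obtained by forgetting signs and superscripts. For each vertex $v$, partition the positions whose underlying letter is $v$ into \emph{$v$-blocks}, declaring two such positions block-equivalent when every intermediate position has underlying vertex in $\Star(v)$. A raag-geodesic realizing the given shape must have all signs within each $v$-block agreeing, for otherwise an occurrence $v(\cdots)v^{-1}$ separated only by $v$-commuting letters would admit a cancellation; this gives $2$ sign choices per block. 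A racg-geodesic on $D(\ga)$ realizing the same shape must have alternating superscripts within each $v$-block, for otherwise two equal generators $v^{(a)}$ separated only by $v$-commuting letters would cancel; this again gives $2$ choices per block (the starting superscript). Hence in both settings the number of geodesics with a prescribed shape equals $2^{\sum_v b_v}$, where $b_v$ is the number of $v$-blocks of the shape; summing over all shapes of length $m$ equates the two geodesic growth functions.

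For step (ii), note that $v^{(1)}$ and $v^{(2)}$ are never joined by an edge of $D(\ga_i)$, so each $k$-clique of $D(\ga_i)$ has distinct underlying vertices that form a $k$-clique of $\ga_i$, while each $k$-clique of $\ga_i$ lifts to exactly $2^k$ cliques of $D(\ga_i)$. Thus the $f$-polynomial of $D(\ga_i)$ is determined by that of $\ga_i$. Moreover, for a $k$-clique $\widetilde{\sigma}$ of $D(\ga_i)$ with underlying $k$-clique $\sigma$ of $\ga_i$, one has $|\Link_{D(\ga_i)}(\widetilde{\sigma})|=2|\Link_{\ga_i}(\sigma)|$, since each vertex of $\Link_{\ga_i}(\sigma)$ contributes both its copies. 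Consequently the hypotheses on $\ga_1,\ga_2$ imply the same hypotheses on $D(\ga_1),D(\ga_2)$, so Corollary~\ref{cor:sameGGC} yields equality of the geodesic growth of the racg's on the two doubles, which by step (i) is the common geodesic growth of $G_1$ and $G_2$.

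The main obstacle is step (i). The doubling bookkeeping for cliques and links in step (ii) is routine. The delicate part of step (i) is to verify carefully that, for a fixed shape, the raag geodesic condition is equivalent to constancy of signs within each $v$-block and that the racg geodesic condition on $D(\ga)$ is equivalent to alternation of superscripts within each $v$-block. Both verifications rely on the standard geodesic criterion in the respective groups — a word fails to be geodesic exactly when it contains, after commutations, two adjacent letters that cancel — applied one vertex at a time.
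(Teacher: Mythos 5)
Your proposal is correct and follows the same overall strategy as the paper: double the graphs, verify that the $f$-polynomial and link-size hypotheses transfer to the doubles (your step (ii) is exactly the paper's computation, $f_{\ga^2}(t)=f_\ga(2t)$ and links doubling in size), and then invoke Corollary \ref{cor:sameGGC}. The one genuine difference is step (i): the paper does not prove the raag-to-racg transfer from scratch but cites Droms--Servatius (Lemma 2), which says the Cayley graph of the raag on $\ga$ is isomorphic as an undirected graph to that of the racg on $\ga^2$, and equality of geodesic growth follows since geodesic counts from the basepoint are an invariant of the (vertex-transitive) Cayley graph. Your replacement is a direct bijective count: for a fixed shape, geodesics in the raag correspond to sign assignments constant on each $v$-block, and geodesics in the racg on the double correspond to superscript assignments alternating on each $v$-block, each contributing $2^{\#\text{blocks}}$. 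This is a valid and self-contained substitute, resting on the standard geodesic criterion in both classes of groups (no two occurrences of the same underlying generator, with opposite sign in the raag case and equal letter in the racg case, separated only by letters in its star); it buys independence from the Droms--Servatius reference and makes explicit why the basepoint plays no role, at the cost of a slightly longer verification.
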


\begin{proof}
Let $\ga_1^2$ and $\ga_2^2$ be the doubles of $\ga_1$ and $\ga_2$.  Let $f_{\ga_1}(t)$ and $f_{\ga_1^2}(t)$ be the $f$-polynomials of $\ga_1$ and its double, respectively. It is easy to check that each clique $\sigma\in \mathcal{A}(\ga_1)$ of size $i$  produces $2^i$ cliques of size $i$ in $\mathcal{A}(\ga_1^2)$, and these are the only $i$-cliques in $\mathcal{A}(\ga_1^2)$. Thus $f_{\ga_1^2}(t)=f_{\ga_1}(2t)$, and therefore $\ga_1^2$ and $\ga_2^2$ have the same $f$-polynomial. It is also easy to see that the link of an $i$-clique of $\mathcal{A}(\ga_1^2)$ has twice as many vertices as that of the link of an $i$-clique in $\mathcal{A}({\ga_1})$. This shows that $\ga_1^2$ and $\ga_2^2$ are also link-regular, with same size links of identical size cliques.

 Lemma 2 in \cite{DromsServatius} states that the Cayley graph of the right-angled Artin group based on a graph $\ga$ is isomorphic as undirected graph to the Cayley graph of the right-angled Coxeter group based on $\ga^2$.  Then, by Corollary \ref{cor:sameGGC}, the racg's based on $\ga_1^2$ and $\ga_2^2$ have identical geodesic growth, and by (\cite{DromsServatius}, Lemma 2), so do the raag's based on $\ga_1$ and $\ga_2$.

\end{proof}

\begin{Rem} The smallest examples of racg's (or raag's) with the same geodesic growth provided by Theorem \ref{Thm:GGC} are those based on a cycle of length $8$, on one hand, and based on the union of two cycles of length $4$, on the other hand (see Figure \ref{fig:squares}). These two groups are non-isomorphic, see for example \cite[Theorem 5.2]{Bahls}. For more examples of such pairs of graphs see Examples 15 and 16 in \cite{GS2009}.
\end{Rem} 

\begin{figure}[ht]
\begin{center}
\setlength{\unitlength}{3947sp}%
\begingroup\makeatletter\ifx\SetFigFont\undefined%
\gdef\SetFigFont#1#2#3#4#5{%
  \reset@font\fontsize{#1}{#2pt}%
  \fontfamily{#3}\fontseries{#4}\fontshape{#5}%
  \selectfont}%
\fi\endgroup%
\begin{picture}(3086,1735)(1118,-3480)
\thinlines
{\color[rgb]{0,0,0}\put(1155,-3443){\framebox(710,709){}}
}%
{\color[rgb]{0,0,0}\put(3741,-3354){\circle*{58}}
}%
{\color[rgb]{0,0,0}\put(4167,-2941){\circle*{58}}
}%
{\color[rgb]{0,0,0}\put(4149,-2355){\circle*{58}}
}%
{\color[rgb]{0,0,0}\put(3753,-1936){\circle*{58}}
}%
{\color[rgb]{0,0,0}\put(3180,-1924){\circle*{58}}
}%
{\color[rgb]{0,0,0}\put(2760,-2355){\circle*{58}}
}%
{\color[rgb]{0,0,0}\put(2749,-2944){\circle*{58}}
}%
{\color[rgb]{0,0,0}\put(2753,-2350){\line( 0,-1){588}}
\put(2752,-2938){\line( 1,-1){415.500}}
\put(3167,-3354){\line( 1, 0){588}}
\put(3755,-3356){\line( 1, 1){416}}
\put(4172,-2941){\line( 0, 1){589}}
\put(4173,-2352){\line(-1, 1){415.500}}
\put(3758,-1936){\line(-1, 0){588}}
\put(3170,-1935){\line(-1,-1){416}}
}%
{\color[rgb]{0,0,0}\put(1853,-3443){\circle*{58}}
}%
{\color[rgb]{0,0,0}\put(1155,-3443){\circle*{58}}
}%
{\color[rgb]{0,0,0}\put(1858,-2728){\circle*{58}}
}%
{\color[rgb]{0,0,0}\put(1156,-2734){\circle*{58}}
}%
{\color[rgb]{0,0,0}\put(1865,-2498){\circle*{58}}
}%
{\color[rgb]{0,0,0}\put(1865,-1782){\circle*{58}}
}%
{\color[rgb]{0,0,0}\put(1155,-2480){\circle*{58}}
}%
{\color[rgb]{0,0,0}\put(1162,-1787){\circle*{58}}
}%
{\color[rgb]{0,0,0}\put(1155,-2498){\framebox(710,710){}}
}%
{\color[rgb]{0,0,0}\put(3168,-3354){\circle*{58}}
}%
\end{picture}%

\caption{The left graph consists of two cycles of length 4. The right graph consists of a cycle of length 8.}\label{fig:squares}
\end{center}
\end{figure}

\begin{Rem} One cannot drop the condition that the graphs have the same $f$-polynomial. For example, for the racg based on a cycle of length $6$ the geodesic growth series is $ \dfrac{1+z+2z^2}{1-5z+2z^2}$ (this can been seen by employing the formula from Theorem \ref{formula}), and the racg based on the union of two cycles of length $3$ has geodesic growth series $\dfrac{1+3z+6z^2+6z^3}{1-3z-6z^2-6z^3}$ (this can be seen using the formula for the geodesic growth of free products). Then these two groups are link-regular, have the same number of generators, but have different geodesic growth.
\end{Rem}

We want to finish this section with two natural questions. The 
first one is: to what extent is our theorem optimal, i.e. are there other 
pairs of groups with the same geodesic growth? For example, if we 
restrict our attention to groups based on trees, computer experiments 
indicate that racg's based on non-isomorphic trees have different geodesic 
growth. Is the geodesic growth a complete invariant for groups based on 
trees?

\section{Relation with other types of growth} \label{relation}

If $(G,S)$ is a right-angled Coxeter system, $\mathcal{A}$ can be regarded as a generating set of $G$ via $\sigma\mapsto \prod_{s\in \sigma} s \in G$. In this context, the set $\mathcal{A}$ is called the {\it automatic generating set} of $G$.

This term was used in  \cite{GS2009} for right-angled Coxeter groups, and its justification is the following.
In \cite{NibloReeves98}, Niblo and Reeves showed that a group acting cocompactly and properly discontinously on a CAT(0) space has
an induced automatic structure with respect to a generating set associated to the action. In the case of right-angled Coxeter groups, the automatic structure
coming from the action on the Davis complex is with respect to the generating set $\mathcal{A}$.

The set $\mathcal{A}$ coincides with  the set of nonempty simplices in $K$, the {\it flag completion} or {\it nerve} of $\Gamma$, that is, the abstract simplicial complex whose simplices are elements of $\mathcal{A}.$ Then, for $\sigma\in \mathcal{A}$ we can define its K-link as

$$\KLink(\sigma)=\{\tau\in K:\tau\cup \sigma \in K\} .$$

In \cite{GS2009}, the authors use a different definition of \textit{link-regularity}, which we will call K-\textit{link-regularity}.  They say that $K$ is K-\textit{link-regular} if the $K$-link of every $i$-simplex,  $0 \leq i \leq dim(K)$, has the same $f$-polynomial. We here show that these definitions are in fact equivalent.

Requiring that the K-link of every $i$-simplex,  $0 \leq i \leq dim(K)$, has the same $f$-polynomial clearly implies that the link of every $i$-clique depends only on $i$. This is because for $\sigma\in\mathcal{A}$, $|\Link(\sigma)|$ is the $t$-coefficient in the $f$-polynomial of $\KLink(\sigma)$. This shows one implication of the equivalence. The following result shows the other implication, i.e. that Definition \ref{link-regular} imposes the equality of the $f$-polynomials of the K-links.

\begin{Lem}
Let $\Gamma$ be a link-regular graph. Then the K-link of  every $i$-simplex,  $0 \leq i \leq dim(K)$, has the same $f$-polynomial.
\end{Lem}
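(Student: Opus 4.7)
The plan is to prove, by induction on $j\geq 1$, that for every $\sigma\in\mathcal{A}$ with $|\sigma|=i$ the number $N_\sigma(j)$ of $j$-element cliques of $\Gamma$ contained in $\Link(\sigma)$ depends only on $i$, $j$ and the sequence of values $|\Link(k)|$, and so (by link-regularity) only on $i$ and $j$. Since a $j$-simplex of $\KLink(\sigma)$ is a $(j+1)$-element clique $\tau$ of $\Gamma$ disjoint from $\sigma$ with $\tau\cup\sigma\in\mathcal{A}$, and such a $\tau$ is exactly a $(j+1)$-clique contained in $\Link(\sigma)$, this is equivalent to saying that every coefficient of the $f$-polynomial of $\KLink(\sigma)$ depends only on $|\sigma|$, which is the content of the lemma. (The interpretation is consistent with the paper's remark, used just before the lemma, that the $t$-coefficient of the $f$-polynomial of $\KLink(\sigma)$ is $|\Link(\sigma)|$.)

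The base case $j=1$ is immediate: $N_\sigma(1)=|\Link(\sigma)|=|\Link(i)|$ by link-regularity.

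For the inductive step I would double-count the set of pairs $(\tau,v)$ with $\tau\subseteq\Link(\sigma)$ a $j$-clique and $v\in\tau$. Counting by $\tau$ first gives $j\cdot N_\sigma(j)$. Counting by $v$ first, I would check that $(\tau,v)\mapsto(v,\tau\setminus\{v\})$ is a bijection onto pairs $(v,\tau')$ with $v\in\Link(\sigma)$ and $\tau'$ a $(j-1)$-clique in $\Link(\sigma\cup\{v\})$; the only non-trivial point is that every vertex of $\tau\setminus\{v\}$ lies in $\Link(\sigma\cup\{v\})$, which is where one uses that $\tau$ is a clique (so its vertices are adjacent to $v$) together with $\tau\subseteq\Link(\sigma)$. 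Since $|\sigma\cup\{v\}|=i+1$, the inductive hypothesis gives $N_{\sigma\cup\{v\}}(j-1)=N(i+1,j-1)$, independent of $v$, and summing over the $|\Link(i)|$ choices of $v$ yields
$$j\cdot N_\sigma(j)=|\Link(i)|\cdot N(i+1,j-1).$$
This closes the induction and in fact produces the closed form $N(i,j)=\tfrac{1}{j!}\prod_{k=0}^{j-1}|\Link(i+k)|$.

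I do not foresee any serious obstacle: the argument is essentially a single double-count combined with link-regularity, and the only mildly delicate step is the bijection used in the inductive step. An alternative route would be to derive the lemma from Lemma~\ref{lem:degj} by expressing $N_\sigma(j+1)$ as a sum of the quantities $\deg_\pi(\sigma)$ for $\pi\subseteq\sigma$, but the induction above is more direct and gives the explicit product formula for free.
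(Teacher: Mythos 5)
Your proof is correct and follows essentially the same strategy as the paper's: induction on $j$ combined with a handshake-style double count and link-regularity. The only (cosmetic) difference is that the paper counts $(j+1)$-cliques of $\Link(\sigma)$ by summing $|\Link(\sigma\cup\tau)|$ over the $j$-cliques $\tau\subseteq\Link(\sigma)$ (invoking the inductive hypothesis for the number of such $\tau$), whereas you sum the $(j-1)$-clique counts of $\Link(\sigma\cup\{v\})$ over the vertices $v\in\Link(\sigma)$ (invoking the inductive hypothesis at the larger clique $\sigma\cup\{v\}$); both recursions are valid and unroll to the same closed form $\tfrac{1}{j!}\prod_{k=0}^{j-1}|\Link(i+k)|$.
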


\begin{proof}
An $i$-simplex $\sigma\in K$ will be also regarded as an $(i+1)$-clique of $\ga$.

The standard result that the number of edges in a graph is half the sum of vertex degrees in that graph can be generalized to the following. For any graph $\Gamma$, the number of $(i+1)$-cliques in $\Gamma$ is given by $\frac{1}{i+1}\sum_{\tau \subseteq K, |\tau|=i}|\Link(\tau)|.$ To see this, notice that for $i$-clique $\tau$ and $v\in \Link(\tau)$, $\tau \cup v$ represents an $(i+1)$-clique, and that when enumerating all pairs  $(\tau, v)$, with $|\tau|=i$ and $v \in \Link(\tau)$, each $(i+1)$-clique gets counted $i+1$ times, once for each vertex in it.

Now fix an i-simplex $\sigma$ in $K$. Its $f$-polynomial $f^{\sigma}(t)=1+f_0^{\sigma}t +f_1^{\sigma}t^2+ \dots $ is determined by the numbers $f_j^{\sigma}$ of $j$-simplices in $\KLink(\sigma)$, for all $0\leq j \leq d-i$. We will prove by induction on $j$ that $f_j^{\sigma}$ depends only on $i$ and $j$, and not on the simplex $\sigma$. For $j=0$ we have $f_0^{\sigma}=|\Link(\sigma)|=|\Link(i+1)|$. By the above $$f_j^{\sigma}=\frac{1}{j+1}\sum_{\tau \subseteq \Link(\sigma), |\tau|=j}|\Link(\tau)\cap \Link(\sigma)|$$ is the number of $(j+1)$-cliques in $\Link(\sigma)$, in other words, the number of $j$-simplices in $\KLink(\sigma)$. Since $\Link(\tau)\cap \Link(\sigma)= \Link(\sigma \cup \tau)$, we get $$f_j^{\sigma}=\frac{1}{j+1}\sum_{\sigma \subset \pi \subseteq K, |\pi|=i+j}|\Link(\pi)|=\frac{1}{j+1}\sum_{\sigma \subset \pi \subseteq K, |\pi|=i+j}|\Link(i+j)|.$$ 
However, the number of $(i+j)$-cliques containing $\sigma$ is equal to $f_{(j-1)}^{\sigma}$, and since by induction this only depends on $i$ and $j$, we obtain that $f_{j}^{\sigma}$ is a function of only $i$ and $j$ as well. Therefore the K-link of  every $i$-simplex,  $0 \leq i \leq \text{dim}(K)$, has the same $f$-polynomial.
\end{proof}

It is well known that the spherical growth of raag's and racg's with respect to the standard generating set depends entirely on the $f$-polynomial of the generating graph (\cite{chiswell}). However, as pointed out in \cite{GS2009} and \cite{JJJ}, this is not the case for  the geodesic growth of racg's or the spherical growth with respect to the automatic generating set. That is, there are racg's with the same standard growth, i.e. same $f$-polynomial, but different automatic or geodesic growth.

Hence, if two graphs $\ga_1$ and $\ga_2$ are link-regular, have the same $f$-polynomial and $|\Link(\sigma)|=|\Link(\sigma')|$ for all $\sigma\in \mathcal{A}(\ga_1)$ and $\sigma'\in\mathcal{A}(\ga_2)$ with $|\sigma|=|\sigma'|$, then the respective racg's $G(\ga_1)$ and $G(\ga_2)$ have
\begin{enumerate}
\item[(i)]  the same geodesic growth with respect to the standard generating set,
\item[(ii)] the same spherical growth with respect to the automatic generating set,
\item[(iii)] the same spherical growth with respect to the standard generating set.
\end{enumerate}

It is not clear what the relation between the different types of 
growth considered in this section is in general. For example, it is not clear if there exist racg's with the same geodesic growth series but different spherical 
growth series. Also, it is not clear if, for two groups, equal 
geodesic growth implies equal spherical growth with respect to the 
automatic generating or viceversa.
%%%%%%
%Section%
%%%%%%
\section{Geodesic growth formula for racg's based on regular triangle-free graphs}\label{sec:formula}

Here we provide the geodesic growth formula for racg's determined by regular triangle-free graphs.

\begin{Thm}\label{formula}
Let $\ga$ be an $l$-regular triangle-free graph with $n$ vertices, $n\geq 4$. The geodesic growth series for the right-angled Coxeter group based on $\ga$ is
\begin{equation}\label{eq:THEformula}
\mathcal{G}(z)=\dfrac{1-(l-3)z+2z^2}{1+(-n-l+3)z+(-2n+2+nl)z^2}
\end{equation}

\begin{proof}
The proof is a consequence of Lemma \ref{lem:sum} and the formulas \eqref{eq:sum3}, \eqref{eq:Eu} and \eqref{eq:Ee}, which follow.\end{proof}
\end{Thm}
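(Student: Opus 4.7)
The plan is to exploit the finite automaton $\mathcal{D}$ of Proposition \ref{prop:automaton}. Since $\ga$ is triangle-free, $\mathcal{A}$ consists only of the $n$ vertices and the $nl/2$ edges of $\ga$, so the non-fail states of $\mathcal{D}$ split into the start state $\emptyset$, the $1$-states, and the $2$-states. Using $l$-regularity and the absence of triangles, I can read off the degree counts $\deg_j(\sigma)$ directly from the definitions: $\deg_0(\emptyset)=n$; from any $1$-state, $\deg_0=n-1-l$ and $\deg_1=l$; and from any $2$-state $\{u,v\}$, triangle-freeness forces $\deg_2=0$ (there is no common neighbour) and makes the $l-1$ further neighbours of $u$ disjoint from the $l-1$ further neighbours of $v$, yielding $\deg_1=2(l-1)$ and $\deg_0=n-2l$. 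By Lemmas \ref{lem:degj} and \ref{lem:transj} these counts depend only on the size of the state and equal the number of transitions of the corresponding type.

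Next, following the recursive bookkeeping of Theorem \ref{Thm:GGC}, I set $E_u(z)=\sum_{m\geq 1}B_1(m)z^m$ and $E_e(z)=\sum_{m\geq 1}B_2(m)z^m$, where $B_i(m)$ counts words of length $m$ accepted at some $i$-state, so that
\[
\mathcal{G}(z)=1+E_u(z)+E_e(z).
\]
Translating the transition counts above into generating-function form gives the $2\times 2$ linear system
\begin{align*}
E_u(z) &= nz+(n-1-l)\, z\, E_u(z)+(n-2l)\, z\, E_e(z),\\
E_e(z) &= l\, z\, E_u(z)+(2l-2)\, z\, E_e(z);
\end{align*}
these are the three formulas alluded to in the statement (one for $\mathcal{G}$, one for $E_u$, one for $E_e$).

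Finally I would solve the system. The second equation yields $E_e(z)=\dfrac{lz}{1-(2l-2)z}\, E_u(z)$; substituting into the first and clearing denominators gives $E_u$ in closed form, with the key algebraic collapse $(n-1-l)(2l-2)-(n-2l)l=nl-2n+2$ producing the denominator $1-(n+l-3)z+(nl-2n+2)z^2$ of \eqref{eq:THEformula}. Summing $1+E_u+E_e$ over this common denominator, the $z$-coefficient of the numerator reduces to $3-l$ and the $z^2$-coefficient to $2$, yielding the claimed expression. The only obstacle is the algebraic simplification in this last step, and it is manageable precisely because triangle-freeness truncates the clique lattice at dimension two, keeping the linear system just $2$-dimensional and explicitly solvable.
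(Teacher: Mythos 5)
Your proposal is correct, and I verified the algebra: the transition counts $\deg_0,\deg_1$ from $1$- and $2$-states are exactly as you state (triangle-freeness gives $N(u)\cap N(v)=\emptyset$ for adjacent $u,v$, so $\deg_1(\{u,v\})=2(l-1)$, $\deg_0(\{u,v\})=n-2l\geq 0$), the $2\times 2$ system is the right translation of the recurrences $B_i(m)=\sum_j \beta_{i,j}B_j(m-1)$, and solving it does reproduce \eqref{eq:THEformula}. However, your route is genuinely different from the paper's. The paper does not use the automaton in this section at all: it works directly with the language, decomposing geodesics by their suffixes of length $1$, $2$ and $3$ (the sets $E_u$, $E_{uv}$, $E_{uvt}$), proving in Lemma \ref{lem:uv2} when appending a letter preserves geodesity, and then carrying out a case analysis over ordered triples $(u,v,t)$ in Lemma \ref{lem:sum} to express $\sum_{u,v,t}E_{uvt}(z)$ in terms of $\sum_e E_e(z)$ and $\sum_u E_u(z)$; combining this with the three bookkeeping identities \eqref{eq:sum3}, \eqref{eq:Eu}, \eqref{eq:Ee} gives a single linear equation for $\mathcal{G}(z)$. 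So the three formulas you identify are not the ones cited in the statement, though your aggregates $E_u(z)$, $E_e(z)$ do coincide with the paper's $\sum_{v}E_v(z)$ and $\sum_{e}E_e(z)$ (in the triangle-free case a word is accepted at the $2$-state $\{u,v\}$ exactly when it ends in the directed edge $uv$ or $vu$). Your transfer-matrix approach is arguably cleaner and reuses the machinery of Proposition \ref{prop:automaton} and Lemmas \ref{lem:degj} and \ref{lem:transj}; the paper's suffix method is chosen because it is the one that generalizes to the even Coxeter setting of Section \ref{evenCoxeter} via forbidden words, where no such small automaton is available.
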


Let  $\Gamma$ be a regular, triangle-free finite graph with $n$ vertices and $l=|\Link(v)|$, for all $v\in \Gamma V$. Let $\widehat{\ga}$ be its corresponding oriented graph, that is, $\widehat{\ga}$ has the same vertex set as $\ga$ and if $u,v$ were connected by an edge in $\ga$, they are connected by two edges in $\widehat{\ga}$, one from $u$ to $v$ and other from $v$ to $u$. 

Let $\mathcal{G}$ denote the language of geodesics of the associated right-angled Coxeter group.
Denote by $E_u$, $E_{uv}$ and $E_{uvt}$ the sets of geodesic words ending in $v$, $uv$ and $uvt$, respectively, where $u,v,t \in V\ga$. We have 
\begin{align}
\mathcal{G}(z)& =1+\sum_{u\in V\ga} E_u(z)\label{eq:sum1},\\
\mathcal{G}(z)& =1+nz+\sum_{u,v\in V\ga} E_{uv}(z)\label{eq:sum2},\\
\mathcal{G}(z)& =1+nz+n(n-1)z^2+\sum_{u,v,t\in V\ga} E_{uvt}(z),\label{eq:sum3}
\end{align}
 where $E(z)$ denotes the characteristic formal power series of a set $E$.
From \eqref{eq:sum1} we obtain
\begin{equation}\label{eq:Eu}
 \sum_{v\in V\ga} E_v(z)= \mathcal{G}(z)-1.
\end{equation}

\begin{Lem}\label{lem:uv2}

\begin{itemize}
\item[\rm{(1)}] If $u\notin \Star(v)$ then $E_{uv}=E_u\cdot v.$ In particular, $E_{uv}(z)= E_v(z)\cdot z$.
If $u=v$ then $E_{uv}$ is empty and $E_{uv}(z)=0$.

\item[\rm{(2)}] If $\{u,v\}\not\subset \Star(t)$ then $E_{uvt}=E_{uv}\cdot t$. In particular, $E_{uvt}(z)= E_{uv}(z)\cdot z$.

\item[\rm{(3)}] If $\{u,v\}\subset \Star(t)$ then 
\begin{enumerate}
\item[\normalfont{(i)}] If $u\neq t\neq v\neq u$, then $E_{uvt}(z)=E_{utv}(z)=E_{ut}(z)\cdot z=E_e(z)\cdot z$, where $e$ is the edge from $u$ to $t$.
\item[\normalfont{(ii)}] If $u=t$ or $v=t$ or $u=v$ then $E_{uvt}(z)=0.$
\end{enumerate}
\end{itemize}
\end{Lem}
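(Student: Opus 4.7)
The plan is to prove each of (1), (2), (3) by examining when appending a letter to a geodesic preserves the geodesic property. The essential tool is the classical word-problem criterion for right-angled Coxeter groups: a word is non-geodesic if and only if it contains two occurrences of some generator $s$ whose intervening letters all commute with $s$ (so that the two copies of $s$ can be brought together and cancelled).

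For (1), the inclusion $E_{uv}\subseteq E_u\cdot v$ is immediate, since any prefix of a geodesic is a geodesic. Conversely, let $xu\in E_u$. If $u=v$, then $xuu$ plainly violates the criterion, so $E_{uv}=\emptyset$. If $u\notin \Star(v)$, then $u\neq v$ and $u$ does not commute with $v$; the terminal $v$ in $xuv$ cannot cancel with an earlier $v$ in $xu$, because any such cancellation would require $v$ to commute with the intervening $u$, contradicting the hypothesis. Part (2) follows by the same template: if $\{u,v\}\not\subset \Star(t)$, then at least one of $u,v$ does not commute with $t$, so the terminal $t$ in $xuvt$ cannot slide past the block $uv$ to reach an earlier $t$, and $xuvt$ is geodesic whenever $xuv$ is.

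For (3)(i), triangle-freeness combined with $\{u,v\}\subset \Star(t)$ and pairwise distinctness of $u,v,t$ forces $u$ and $v$ to be non-adjacent, so $u\notin \Star(v)$. Since $v$ and $t$ commute in $G$, the substitution $xuvt\mapsto xutv$ is a length-preserving bijection between words representing the same group element; being geodesic depends only on length and group element, so this bijection carries $E_{uvt}$ onto $E_{utv}$, giving $E_{uvt}(z)=E_{utv}(z)$. Part (2) then applies to the last three letters $(u,t,v)$, since $u\notin \Star(v)$ implies $\{u,t\}\not\subset \Star(v)$, yielding $E_{utv}=E_{ut}\cdot v$ and so $E_{utv}(z)=E_{ut}(z)\cdot z$. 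For (3)(ii), the sub-cases $u=v$ and $v=t$ each place two equal letters adjacent at the end of $xuvt$; and $u=t$ combined with $v\in \Star(t)=\Star(u)$ means $v$ commutes with $u$, so the two copies of $u$ in $xuvu$ flank a commuting letter and cancel. In each sub-case $xuvt$ is non-geodesic, so $E_{uvt}=\emptyset$. The only delicate point is the appeal to the word-problem criterion for right-angled Coxeter groups; beyond that, each item is routine bookkeeping.
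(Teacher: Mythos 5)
Your proof is correct and follows essentially the same route as the paper's: the shuffle/cancellation criterion for geodesics in right-angled Coxeter groups yields (1) and (2), the commuting swap $xuvt\mapsto xutv$ gives the bijection in (3)(i), and triangle-freeness reduces (3)(i) to an application of (2). The only caveat — shared with the paper's own argument — is that (2) tacitly assumes $t\neq v$ (if $v=t$ the appended $t$ cancels with the adjacent $v$ and $E_{uvt}$ is empty), a degenerate case that never occurs where the lemma is invoked.
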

\begin{proof}
(1) The second fact is obvious. For the first one, let $w$ be a word ending in $u$. If $(w,v)$ is not geodesic then $w$ can be factorized as $(w_1,v ,w_2)$, with all the generators involved in $w_2$ belonging to $\Star(v)$. Since $w_2$ ends in $u \notin \Star(v)$, $(w,v)$ is geodesic.

(2) The proof is the same as the one for (1).

(3) The statement (ii) is obvious. We prove (i). Let $w\in E_{uvt}$. Then $w=(w_1,u,v,t)$. Since $t$ and $v$ commute, $(w_{1},u,t,v)$ is geodesic and this defines a bijection between $E_{uvt}$ and $E_{utv}$. Now, since $\Gamma$ is triangle free, if $t\neq v\neq u$ and $u,v\in \Star(t)$ then $v\notin \Star(u)$ and hence $E_{utv}=E_{ut} \cdot v$, by (2). Finally $u$ and $t$ are endpoints of an edge.
\end{proof}

For each edge $e\in E\widehat{\ga}$, we define $E_e(z)$ to be the characteristic function of $E_{uv}$, where $u$ is the initial vertex of $e$ and $v$ is the terminal vertex of $e$.
From \eqref{eq:sum2},  \eqref{eq:Eu} and the Lemma \ref{lem:uv2}(1) we obtain
\begin{align*}
\mathcal{G}(z) 
&=1+ nz+ \sum_{v\in V\ga, u\notin \Star(v)} E_{u}(z) \cdot z +\sum_{e\in E\widehat{\ga}} E_{e}(z)\\
&= 1+nz+(n-l-1) (\mathcal{G}(z)-1) z+ \sum_{e\in E\widehat{\ga}} E_{e}(z).
\end{align*}
Then finally,
\begin{equation}\label{eq:Ee}
\sum_{e\in E\widehat{\ga}} E_{e}(z)=%\dfrac{\mathcal{G}(z)(1-(n-3)z) -1-nz+(n-3)z}{2n}=
\mathcal{G}(z)(1-(n-l-1)z) -1-(l+1)z.
\end{equation}
Recall that $|V\ga|=n$ and $|\Link(v)|=l$.

\begin{Lem}\label{lem:sum}
$$\sum_{u,v,t\in V\ga} E_{uvt}(z)=(n+l-3) \sum_{e\in E\widehat{\ga}} E_e(z)z+( -l^2+2l+n^2-2n-nl+1)\sum_{u\in V\ga}E_u(z)z^2.$$
\end{Lem}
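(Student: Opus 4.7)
The plan is to partition the triple sum $\sum_{u,v,t\in V\ga} E_{uvt}(z)$ according to the cases of Lemma \ref{lem:uv2}. Each nonzero term falls into one of two types: \textbf{Type A}, in which $u,v,t$ are distinct and $\{u,v\}\subset\Star(t)$, so $E_{uvt}(z)=E_{e}(z)z$ with $e=(u,t)$ by Lemma \ref{lem:uv2}(3)(i); and \textbf{Type B}, in which $u\neq v$, $v\neq t$, and $\{u,v\}\not\subset\Star(t)$, so $E_{uvt}(z)=E_{uv}(z)z$ by Lemma \ref{lem:uv2}(2). The Type A contribution is immediate: for a fixed oriented edge $e=(u,t)\in E\widehat{\ga}$, the admissible $v$'s are the $l-1$ vertices of $\Link(t)\setminus\{u\}$, so Type A contributes $(l-1)z\sum_{e}E_{e}(z)$.

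For Type B I would further split according to whether $u\in\Link(v)$ or $u\notin\Star(v)$, applying Lemma \ref{lem:uv2}(1) in the latter case to rewrite $E_{uv}(z)=E_v(z)z$. If $u\in\Link(v)$ then $(u,v)$ is an oriented edge and, since $\Gamma$ is triangle-free, no $t\notin\{u,v\}$ satisfies $\{u,v\}\subset\Star(t)$; hence the valid $t$'s are the $n-2$ vertices outside $\{u,v\}$, contributing $(n-2)z\sum_{e}E_{e}(z)$. Together with Type A this produces the coefficient $n+l-3$ of $z\sum_{e}E_{e}(z)$, as required. If instead $u\notin\Star(v)$ (which subsumes the subcase $u=t$, allowed precisely because $v\notin\Star(u)$), the term becomes $E_v(z)z^2$, and the remaining task is to count admissible pairs $(u,t)$ for each fixed $v$ in the two subsubcases $u=t$ and $u\neq t$.

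The key combinatorial ingredient is the identity
$$\sum_{u\notin\Star(v)}|\Link(u)\cap\Link(v)| \;=\; l(l-1) \qquad \text{for every } v\in V\ga,$$
which is the only point at which $l$-regularity and triangle-freeness are used jointly. I would prove it by swapping the order of summation: the left side counts pairs $(u,t)$ with $t\in\Link(v)$, $u\in\Link(t)$, $u\notin\Star(v)$; for each of the $l$ vertices $t\in\Link(v)$, triangle-freeness forces every $u\in\Link(t)\setminus\{v\}$ to satisfy $u\notin\Star(v)$, leaving exactly $l-1$ choices. Plugging this in, the $u=t$ subsubcase contributes $(n-l-1)$ per $v$ and the $u\neq t$ subsubcase contributes $(n-l-1)(n-2)-l(l-1)$ per $v$; their sum simplifies to $(n-l-1)(n-1)-l(l-1)=-l^2-nl+2l+n^2-2n+1$, which is exactly the claimed coefficient of $z^2\sum_{u}E_u(z)$. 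The main obstacle is precisely this combinatorial identity; everything else is routine case-bookkeeping and polynomial arithmetic.
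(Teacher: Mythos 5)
Your proof is correct and follows essentially the same route as the paper: the same case split (distinct $u,v,t$ with $\{u,v\}\subset\Star(t)$; adjacent $u,v$; non-adjacent $u\neq v$) with the same counts $l-1$, $n-2$ and $(n-1)(n-l-1)-l(l-1)$, and your double-counting proof of $\sum_{u\notin\Star(v)}|\Link(u)\cap\Link(v)|=l(l-1)$ simply makes explicit the step the paper asserts without justification. One small caveat: since $E_{uv}=E_u\cdot v$, the correct specialization of Lemma \ref{lem:uv2}(1) is $E_{uv}(z)=E_u(z)\,z$ (the ``$E_v(z)\cdot z$'' in the paper's statement of that lemma is a typo), so the summand in your last case is $E_u(z)z^2$ and the bookkeeping should nominally be done per fixed $u$ over pairs $(v,t)$; this changes nothing here, because the per-$u$ and per-$v$ counts coincide (both equal $(n-1)(n-l-1)-l(l-1)$) and the resulting coefficient multiplies the symmetric sum $\sum_{w\in V\ga}E_w(z)$.
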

\begin{proof}
We will split the sum  $\sum_{u,v,t\in V\ga} E_{uvt}(z)$ in different cases depending on  $t$.

{\bf Case 1:} $\{u,v\} \subset \Star(t)$.

If $\{u,v\}\subset\Star(t)$, then by Lemma \ref{lem:uv2}(3) we are only interested in the cases $t\neq u\neq v\neq t$. Then $E_{uvt}(z)=E_e(z) z$, where $e$ is the edge from $u$ to $t$. 

\begin{itemize}
\item Fixing $t$ and $u$, there are $l-1$ posibilities for $v$, that is,  $E_e(z) z$, where $e$ is the edge form $u$ to $t$, appears $l-1$ times in this case. Hence the contribution of these triples is $$(l-1)\sum_{e\in E\widehat{\ga}} E_e(z)z.$$
\end{itemize}

{\bf Case 2:} $\{u,v\} \not\subset \Star(t)$.

{\bf Subcase 2.1:} Suppose that $u$ and $v$ are at distance 1. 

Then $E_{uvt}(z)=E_e(z)\cdot z$ by Lemma \ref{lem:uv2} (2), where $e$ is the edge from $u$ to $v$.
\begin{itemize}
\item   For each edge from $u$ to $v$ there are $n-2$ vertices different from $\{u,v\}$. Since $\ga$ is triangle free, if $t$ is any of these $n-2$ vertices, $\{u,v\}\not\subset \Star(t)$. The contribution of these triples to the sum is $$(n-2) \sum_{e\in E\widehat{\ga}} E_e(z)z.$$
\end{itemize}

 {\bf Subcase 2.2:} Suppose that $u$ and $v$ are at distance greater than 1. 

Since $\{u,v\}\not\subset\Star(t)$ and $t\neq v$, then $E_{uvt}(z)=E_{uv}(z)z=E_u(z)z^2$, by Lemma \ref{lem:uv2} (1), (2). Notice that $v$ can be any vertex except for $t$ and the vertices between $u,v$ that are at distance more than $1$ from $v$.
\begin{itemize}
\item   Fixing $t$, there are $n-1$  possibilities for $v$. There are $n-l-1$ vertices at distance more that 1 from $v$. We have to substract the cases when $u$ and $v$ are at distance 2 and in the link of $t$. This quantity is $l(l-1)$. For each of these $t$, there are $(n-1)(n-l-1)-l(l-1)$ possibilities for $u$ and $v$, i.e., for a given $u$, there are $(n-1)(n-1-l)-l(l-1)$ possibilities for $v$ and $t$ in this case. The contribution of these triples to the sum is $$[(n-1)(n-1-l)-l(l-1)]\sum_{u\in V\ga}E_u(z)z^2.$$
\end{itemize}

It is easy to check that the total sum is exactly the one claimed in the lemma.
\end{proof}
\begin{Rem}
It is possible to use the ideas of Theorem \ref{formula} to obtain a formula for the geodesic growth series of  any ragc based on a link-regular graph. However, the combinatorics involved get more complicated. 

This approach can also be used to give an alternative proof of Theorem \ref{Thm:GGC}. 
If $d=\max\{ |\sigma|: \sigma \in \mathcal{A}\}$, one can use an inductive argument on $d$ to show that $\mathcal{G}(z)$ is rational and only depend on the $f$-polynomial of $\ga$ and the values of $|\Link(k)|$, $k=1,\dots, d$. 

One argues as follows: first one uses link-regularity to generalize Lemma \ref{lem:sum}. That is, the sum of the characteristic series of words ending in suffixes of length $d+1$ is a linear combination of the sums of the characteristic series of words ending in $j$ letters spanning a $j$-clique, $1\leq j \leq d$.

Then  one uses the induction hypothesis to argue that there exist $d+1$ formulas as \eqref{eq:sum1}, \eqref{eq:sum2},\eqref{eq:sum3} that only depend   on the $f$-polynomial of $\ga$ and the values of $|\Link(k)|$, $k=1,\dots, d$.

Theorem \ref{formula} follows this inductive strategy for the case $d=2$.
\end{Rem}

\section{Geodesic growth of even Coxeter groups} \label{evenCoxeter}

The objective of this section is to use  Grigorchuk and Nagnibeda's technique \cite{GN} to describe the  geodesic growth of triangle-free even Coxeter groups, and show that two star-regular, triangle-free, even Coxeter groups with the same number of generators have the same geodesic growth.

Before plunging into the geodesic growth we recall a few facts about Coxeter systems. We start with a standard characterization of Coxeter systems, a proof of which can be found, for example, in \cite[Theorem 1.5.1]{BjornerBrenti}.
\begin{Thm}[The Deletion Condition]
Let $G$ be a group generated by a set of involutions $S\subseteq G.$ Then $(G,S)$ is a Coxeter system if and only if the following holds: for every word $\word\equiv(x_1,\dots,x_n)$ in $S$ that is not geodesic, there exist indices $i<j$ such that $(x_1, \dots,\hat{x_{i}},\dots,\hat{x_{j}},\dots, x_n)$, the word obtained by deleting $x_i$ and $x_j$,  represents the same element of $G$ as $\word$.
\end{Thm}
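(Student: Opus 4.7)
This is a biconditional and the two implications rest on different ideas, so the plan is to handle them separately.

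For the forward direction, assume $(G,S)$ is a Coxeter system; the route is through the Exchange Condition via Tits' geometric representation. Set $V = \R^{|S|}$ with basis $\{\alpha_s\}_{s\in S}$, define the symmetric bilinear form $B(\alpha_s,\alpha_t) = -\cos(\pi/m_{s,t})$ (with the value $-1$ when $m_{s,t}=0$), and let each $s\in S$ act on $V$ as the $B$-orthogonal reflection $\sigma_s \colon v \mapsto v - 2B(v,\alpha_s)\alpha_s$. A direct computation shows that $\sigma_s\sigma_t$ has order exactly $m_{s,t}$, so these maps assemble into a representation $\rho \colon G \to GL(V)$, which Tits' classical cone argument proves faithful. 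Splitting the root system $\Phi = G\cdot\{\alpha_s\}$ into $\Phi^+\sqcup \Phi^-$ by the sign of coordinates, one establishes the central length formula
\[
\ell(g) = |\Phi^+ \cap g^{-1}(\Phi^-)|
\]
by induction on $\ell(g)$, using that left multiplication by $s\in S$ flips the sign of precisely the root $\alpha_s$.

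From the length formula I would extract the Exchange Condition: if $s_1\cdots s_k$ is a reduced expression and $\ell(s_1\cdots s_k s)\leq k$, then the root $s_k s_{k-1}\cdots s_{i+1}(\alpha_s)$ changes sign at some first index $i$, and a short manipulation gives $s_1\cdots s_k s = s_1\cdots \hat{s_i}\cdots s_k$. Deletion then follows by a clean bootstrapping: given a non-geodesic $(x_1,\dots,x_n)$, let $j$ be the least index with $(x_1,\dots,x_j)$ non-geodesic. Then $(x_1,\dots,x_{j-1})$ is reduced and $\ell(x_1\cdots x_j)\leq \ell(x_1\cdots x_{j-1})$, so applying Exchange with $w = x_1\cdots x_{j-1}$ and $s = x_j$ yields $i<j$ with $x_1\cdots x_j = x_1\cdots\hat{x_i}\cdots x_{j-1}$; substituting into the original word deletes exactly the letters $x_i$ and $x_j$ as required.

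For the converse, assume $G=\gen{S}$ has involutory generators and Deletion holds. Define $m_{s,t}$ to be the order of $st$ in $G$ (setting $m_{s,s}=1$), let $W$ be the abstract Coxeter group on $S$ with these relations, and consider the canonical surjection $\pi \colon W\twoheadrightarrow G$. The goal is to prove $\pi$ is injective, which delivers the Coxeter presentation of $G$. By the forward direction, $W$ itself satisfies Deletion. The natural plan is a strong induction on $\ell_W(w)$ showing that $\pi(w)=1$ forces $w=1_W$: a reduced expression $(s_1,\dots,s_n)$ for $w$ in $W$ is non-geodesic in $G$, so Deletion in $G$ produces $i<j$ such that $(s_1,\dots,\hat{s_i},\dots,\hat{s_j},\dots,s_n)$ also represents $1$ in $G$; this shorter word defines an element $w'\in\ker\pi$ with $\ell_W(w')\leq n-2$, which induction forces to be trivial in $W$.

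The main obstacle is closing this induction. The conclusion $w'=1$ in $W$ translates, with $A = s_1\cdots s_{i-1}$ and $B = s_{i+1}\cdots s_{j-1}$ in $W$, to $w = A\cdot s_i\cdot B s_j B^{-1}\cdot A^{-1}$ in $W$, and forcing this to be trivial amounts to showing $s_i = B s_j B^{-1}$ in $W$ — an identity that the Deletion hypothesis only gives us in $G$. Equivalently, one must prove that the two reduced expressions in $G$ produced by Deletion are already connected by braid moves of lengths $m_{s,t}$, which is a Matsumoto-type theorem in the absence of a prior Coxeter presentation. The standard way out is either a gallery-and-folding argument on the Cayley graph of $G$ or a delicate double induction on word length tracking how successive Deletion steps interact; this combinatorial core of the converse is the real work, whereas the forward direction is essentially mechanical once the geometric representation is in place.
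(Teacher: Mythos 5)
The paper does not prove this statement at all: it is quoted as a standard characterization of Coxeter systems with a citation to \cite[Theorem~1.5.1]{BjornerBrenti}, so there is no in-paper argument to compare against. Judged on its own terms, your forward direction is a correct and standard outline (Tits representation, positive/negative roots, the length formula $\ell(g)=|\Phi^+\cap g^{-1}(\Phi^-)|$, Exchange, and then Deletion by taking the least $j$ with $(x_1,\dots,x_j)$ non-geodesic and exchanging into the reduced prefix). The only point worth making explicit there is that $\ell(ws)\le\ell(w)$ really forces $\ell(ws)=\ell(w)-1$ via the sign homomorphism, which is what the Exchange Condition needs; you use this implicitly.

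The converse, however, is not a proof but a correct diagnosis of why your plan does not close, and that gap is genuine. As you observe, Deletion in $G$ hands you the identity $s_1\cdots s_n = s_1\cdots\hat{s_i}\cdots\hat{s_j}\cdots s_n$ only in $G$, and the induction on $\ell_W$ needs the corresponding identity in $W$; nothing in your setup supplies it. The missing ingredient is precisely the combinatorial heart of the theorem: from Deletion one first derives the Exchange Condition \emph{in $G$}, and from Exchange alone (with no presentation assumed) one proves Tits' word property --- every word over $S$ can be carried to a reduced word for the same element of $G$ using only deletions of adjacent equal letters and braid moves of length $m_{s,t}$, and any two reduced words for the same element are connected by braid moves. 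That statement is what lets you conclude that every relation in $G$ is a consequence of the relations $(st)^{m_{s,t}}=1$, i.e.\ that $\pi\colon W\twoheadrightarrow G$ is injective; its proof is a double induction on length that you name but do not carry out. Without it the biconditional is only half proved, so the proposal as written is incomplete. If you want to finish along these lines, prove Deletion $\Rightarrow$ Exchange (easy: apply Deletion to $(x_1,\dots,x_k,s)$ and rule out $j\le k$ using that the prefix is reduced), then follow the Exchange-only proof of the word property as in Bourbaki or Bj\"orner--Brenti.
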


A nice property of even Coxeter groups is that the abelianization consists of $|S|$ copies of $\Z/2\Z$.
\begin{Lem}\label{lem:abelianized}
Let $(G,S)$ be a Coxeter system. Then there exists a homomorphism from $G$ to $\Z/2\Z$.
Moreover, if $(G,S)$ is an even Coxeter system, then $G_{\text{ab}}\cong (\Z/2\Z)^{|S|}.$  
\end{Lem}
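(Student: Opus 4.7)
The plan is to build the required homomorphisms explicitly from the Coxeter presentation and then assemble them into an isomorphism in the even case.

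For the first statement, I would define $\phi\colon G\to \Z/2\Z$ on the free monoid $S^{*}$ by $\phi(s)=1$ for every $s\in S$, and check that $\phi$ respects each defining relation $(s_is_j)^{m_{i,j}}=1$. The image of the left-hand side is $2m_{i,j}\cdot 1=0$ in $\Z/2\Z$, so $\phi$ descends to a group homomorphism (this is the standard ``length-parity'' or sign map, and it exists for any Coxeter system, not just the even ones).

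For the second statement, the idea is to produce, for each $s\in S$, a separate homomorphism $\phi_s\colon G\to \Z/2\Z$ by declaring $\phi_s(s)=1$ and $\phi_s(t)=0$ for $t\in S\setminus\{s\}$. The verification that $\phi_s$ is well-defined uses the evenness hypothesis in an essential way: for the diagonal relation $s_i^{2}=1$ the image is $2\phi_s(s_i)=0$, while for $i\neq j$ the image of $(s_is_j)^{m_{i,j}}$ is $m_{i,j}\bigl(\phi_s(s_i)+\phi_s(s_j)\bigr)$, which vanishes in $\Z/2\Z$ because $m_{i,j}$ is even. This is exactly the step that fails for a general Coxeter system, so I expect it to be the only subtle point; it is also the reason the statement needs the even hypothesis.

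Finally, I would package these into a single homomorphism $\Phi=(\phi_s)_{s\in S}\colon G\to (\Z/2\Z)^{|S|}$, which factors through $G_{\mathrm{ab}}$ as $\bar\Phi\colon G_{\mathrm{ab}}\to (\Z/2\Z)^{|S|}$ because the target is abelian. In the opposite direction, since each generator $s$ satisfies $s^{2}=1$ in $G$ (hence in $G_{\mathrm{ab}}$), the assignment $e_s\mapsto [s]$ extends to a well-defined homomorphism $\Psi\colon (\Z/2\Z)^{|S|}\to G_{\mathrm{ab}}$, which is surjective because the classes $[s]$ generate $G_{\mathrm{ab}}$. The composition $\bar\Phi\circ\Psi$ sends $e_s$ to $e_s$, hence is the identity on $(\Z/2\Z)^{|S|}$, which forces $\Psi$ to be injective and therefore an isomorphism. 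This gives $G_{\mathrm{ab}}\cong (\Z/2\Z)^{|S|}$, completing the proof.
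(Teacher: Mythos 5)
Your proof is correct and follows essentially the same route as the paper: both arguments compute the abelianization directly from the Coxeter presentation, using that an even relation $(s_is_j)^{m_{i,j}}$ dies in $(\Z/2\Z)^{|S|}$ while the sign map $s\mapsto 1$ handles the general case. You simply spell out in more detail (via the explicit coordinate maps $\phi_s$ and the retraction argument) what the paper states in one line.
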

\begin{proof}
When we abelianize $G$, the relation $(sr)^m$ is either a consequence of $sr=rs$ (when $m$ is  even) or implies that $s=r$  (when $m$ is odd). Hence, if $G$ is non-trivial, the abelianization is a non-trivial  product of $\Z /2\Z$'s. If $(G,S)$ is even, then $G_{\text{ab}}\cong (\Z/2\Z)^{|S|}.$ 
\end{proof}
The following observation will be useful. 
\begin{Cor}\label{cor:forbidden}
Let $(G,S)$ be an even Coxeter system and $\word \equiv (x_1,\dots, x_n)$ be a geodesic.
Suppose that there exist $s,t\in S$ such that $(s,\word) $ and $(\word, t)$ are geodesics, but $(s, \word, t)$ is not geodesic. Then $(s,\word, t)$ represents the same word as $\word$ and $s=t.$
\end{Cor}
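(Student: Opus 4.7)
The plan is to combine the Deletion Condition with Lemma~\ref{lem:abelianized}. Because the word $(s, w, t)$ of length $n+2$ is not geodesic, the Deletion Condition provides two positions $i<j$ whose removal yields a word representing the same element of $G$. First I would rule out every choice of these positions except the pair $\{s,t\}$; then the abelianization would force $s=t$.

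Writing $(s, w, t) = (x_0, x_1, \dots, x_n, x_{n+1})$ with $x_0 = s$ and $x_{n+1} = t$, I split into four cases according to which two of these positions the deletions hit. If both deleted positions lie in $\{1, \dots, n\}$, let $w'$ be $w$ with those two letters removed; then $sw't = swt$ in $G$, and cancelling the involutions $s$ on the left and $t$ on the right yields $w' = w$ in $G$ with $|w'|=n-2$, contradicting that $w$ is geodesic. If $i=0$ and $j\in\{1,\dots,n\}$, let $w''$ be $w$ with one letter removed; then $w''t = swt$ gives $w'' = sw$ in $G$, but $|w''|=n-1<n+1=|sw|$ because $(s,w)$ is geodesic, a contradiction. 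The symmetric case $i\in\{1,\dots,n\}$, $j=n+1$ is handled identically using that $(w,t)$ is geodesic. Only the pair $i=0$, $j=n+1$ remains, giving $swt = w$ in $G$, which is the first claim.

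For the second claim I would apply the abelianization map from Lemma~\ref{lem:abelianized}: its codomain $(\Z/2\Z)^{|S|}$ has the elements of $S$ mapping onto a basis, so the relation $swt = w$ in $G$ becomes $s+t=0$ in $(\Z/2\Z)^{|S|}$, which forces $s=t$ as elements of $S$. I do not anticipate any serious obstacle; the only care needed is in the bookkeeping of the four deletion cases and in carefully distinguishing ``equal as elements of $G$'' from ``equal as words'' throughout. The boundary case $n=0$ (with $w$ empty) is handled automatically, since the first three cases become vacuous and only the $\{s,t\}$-deletion survives.
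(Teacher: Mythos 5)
Your proposal is correct and follows exactly the paper's argument: apply the Deletion Condition to $(s,w,t)$, use the geodesicity of $w$, $(s,w)$ and $(w,t)$ to rule out every deletion pair except $\{s,t\}$, and then invoke the abelianization of Lemma~\ref{lem:abelianized} to conclude $s=t$. Your four-case bookkeeping is just a fuller write-up of the paper's one-line assertion that ``the only possibility is to suppress $s$ and $t$.''
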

\begin{proof}
Since $(s, w, t)$ is not geodesic, the Deletion Condition implies that we can suppress two letters of $(s,\word, t).$ As $(s, \word)$ and $(\word, t)$ are geodesics, the only possibility is to suppress $s$ and $t$.  Lemma \ref{lem:abelianized} implies $s=t$.
\end{proof}

From now on we will use the terminology from Grigorchuk and Nagnibeda's paper \cite{GN}. Let $\mathcal{G}$ denote the language of geodesics of a Coxeter group over the set $S$. To compute the growth of the language of geodesics, we will describe $\mathcal{G}$ as a language determined by a set of {\it forbidden words} $U$. That is, there is a set $U\subseteq S^*$, such that any word on $S$ is in $\mathcal{L}$ exactly when it does not contain a subword of $U$. 
According to \cite{GN}, the set $U$ must satisfy the two natural conditions: 
\begin{enumerate}
\item[(U1)]  $S\cap U=\emptyset$; 
\item[(U2)] any forbidden word contains no proper forbidden subword.
\end{enumerate}

For $s\in S$, let $C(s)=\{g\in G: gs=sg\}$, the centralizer of $s$. 

\begin{Lem}\label{lem:langforbwords0}
Let $(G,S)$ be an even Coxeter system. For $s\in S$ let 
$$U(s)=\{(s,\word,s) : (\word,s) \text{ is a geodesic word such that} \ \pi(w) \in C(s) \}.$$ 
Then $\mathcal{G}$, the language of geodesics of $G$ over $S$, is determined by the set of forbidden words $U=U(G,S)\coloneqq \cup_{s\in S} U(s)$. Moreover, $U$  satisfies {\normalfont{(U1)}} and {\normalfont{(U2)}}.
\end{Lem}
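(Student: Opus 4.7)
The plan is to establish both directions of the characterization of $\mathcal{G}$ by $U$ and then check conditions (U1) and (U2). Condition (U1) is immediate because every element of $U$ has the form $(s,w,s)$ and hence length at least $2$, so $S\cap U=\emptyset$. For the forward direction (words containing a forbidden subword are non-geodesic), I would just compute: if $(s,w,s)\in U(s)$, then $\pi(s,w,s)=s\pi(w)s=\pi(w)s^2=\pi(w)$ using $\pi(w)\in C(s)$ and $s^2=1$, so any occurrence of $(s,w,s)$ as a subword can be replaced by $w$, shortening the word by two letters. Hence any word containing an element of $U$ is non-geodesic.

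The reverse direction is the heart of the argument. Given a non-geodesic word $v\equiv(x_1,\dots,x_n)$, I would choose a contiguous subword $(x_i,\dots,x_j)$ that is non-geodesic but of minimal length among non-geodesic contiguous subwords of $v$. By minimality both $(x_i,\dots,x_{j-1})$ and $(x_{i+1},\dots,x_j)$ are geodesic. The Deletion Condition yields indices $i\le p<q\le j$ such that removing $x_p$ and $x_q$ gives the same element; if $p>i$ or $q<j$ then the proper subword $(x_i,\dots,\widehat{x_p},\dots,\widehat{x_q},\dots,x_j)$ would exhibit non-geodesicity of $(x_i,\dots,x_{j-1})$ or $(x_{i+1},\dots,x_j)$, contradicting minimality. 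Hence $p=i$ and $q=j$. Applying Corollary \ref{cor:forbidden} with $s=x_i$, $t=x_j$, $\word=(x_{i+1},\dots,x_{j-1})$ gives $x_i=x_j$ and $s\pi(\word)s=\pi(\word)$; the latter says exactly $\pi(\word)\in C(s)$, and $(w,s)=(x_{i+1},\dots,x_j)$ is geodesic by minimality. Thus $(x_i,\dots,x_j)\in U(s)$, and $v$ contains a forbidden subword.

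For (U2), let $(s,\word,s)\in U(s)$ and consider any proper contiguous subword. Every such subword is contained either in the prefix $(s,\word)$ or in the suffix $(\word,s)$. The suffix is geodesic by definition of $U(s)$, and the prefix is geodesic because $\pi(w)\in C(s)$ forces $\pi(s,\word)=s\pi(\word)=\pi(\word)s=\pi(\word,s)$, so $|\pi(s,\word)|=|\word|+1$. By the two directions already established, no geodesic word can contain an element of $U$, so no proper subword of $(s,\word,s)$ lies in $U$, which is condition (U2). The main obstacle I anticipate is the reverse direction: one must argue carefully that minimality forces the deleted letters produced by the Deletion Condition to sit precisely at the two endpoints of the subword, since only then does Corollary \ref{cor:forbidden} apply and give both $s=t$ and the centralizer condition required to place the word in $U(s)$.
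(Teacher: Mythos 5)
Your proof is correct and follows essentially the same route as the paper: the forward direction via $\pi(s,\word,s)=\pi(\word)$, and the reverse direction via a minimal non-geodesic subword whose endpoints are forced by the Deletion Condition, so that Corollary \ref{cor:forbidden} yields $s=t$ and $\pi(\word)\in C(s)$. Your treatment of (U2) is in fact slightly more careful than the paper's, since you explicitly verify that the prefix $(s,\word)$ is geodesic by comparing it with the geodesic $(\word,s)$.
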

\begin{proof}
The set $U$ satisfies (U1) since each word in $U$ has length at least 2. It also satisfies (U2) since words of $U$ are non-geodesics, but every proper subword is geodesic. In particular, any word $\word \in S^*$ containing a subword of $U$ is non-geodesic.

It only remains to show that if $\word \in S^*$ does not contain any subword of $U$, then $\word$ is geodesic. Suppose that this is not true, that is, there exists $\word \in S^*$ such that $\word$ does not contain any subword of $U$ and $\word$ is not geodesic. We can assume further that $\word$ is the shortest with this property, that is, any proper subword is geodesic. Since every element of $S$ is non-trivial in $G$, we can assume that $\word$ has length greater than 2.  We can express $\word$ as  $(s,\word_1,t)$ where $s,t\in S$ and $\word_1\in S^*.$ Then $(s,\word_1)$ and $(\word_1,t)$ are geodesics. By Corollary \ref{cor:forbidden}, $s=t$ and $(s, \word_1, s)$ represents the same word as $\word_1$. Then, by minimality, $(\word_1)$ is a geodesic word representing some element of $C(s)$. 
\end{proof}

The previous lemma tells us that, in order to understand the forbidden words, we need to understand the centralizers of generators. This was studied by Brink \cite{Brink} in terms of root systems.

\begin{Not}
Let $(G,S)$ be an even Coxeter system with relation matrix $(m_{s,t}).$ For $s,t\in S$, $m_{s,t}\neq 0$, we denote by $a_{t,s}$ the word $(t,s,t,s, \dots, t)$ of length $m_{s,t}-1$ (i.e. $(ts)^{m_{s,t}/2-1}t$), and  we let $A(s)$ be the set $\{a_{t,s} :  t \in \Link(s)\}$. \end{Not}

\begin{Thm}\label{thm:centralizers}
Suppose $(G,S)$ is an even Coxeter system. Then

\begin{enumerate}
\item[\normalfont{(i)}] $C(s)$ is a Coxeter group generated by $\{s\}\cup A(s).$
\item[\normalfont{(ii)}] If a word on $\{s\}\cup A(s)$ is geodesic, then it is also geodesic as a word on $S$.
\item[\normalfont{(iii)}] Let $A(s)=\{a_1,\dots,a_n\}.$ If $\Star(s)$ spans a tree $\ga(W,S)$, then $C(s)\cong \gp{s,a_1,\dots, a_n}{s^{2},a_i^{2}, [s,a_i]\, (i=1,\dots,n)}.$

\end{enumerate}
\end{Thm}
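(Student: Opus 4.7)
For part (i), the inclusion $\{s\} \cup A(s) \subseteq C(s)$ is by direct computation inside the dihedral parabolic $\langle s, t\rangle$ of order $2m_{s,t}$: since $m_{s,t}$ is even, its center is generated by $(st)^{m_{s,t}/2}$, and a short calculation shows $a_{t,s} = s \cdot (st)^{m_{s,t}/2}$, so $a_{t,s}$ commutes with $s$ in $G$. The reverse inclusion $C(s) \subseteq \langle s, A(s)\rangle$ is the heart of the argument. I would take a geodesic $w \in S^*$ representing $g \in C(s)$ and induct on $|w|$: because $sg = gs$, the word $(s, w, s)$ is non-geodesic while its two proper truncations $(s, w)$ and $(w, s)$ can be analyzed via the forbidden-word description of Lemma \ref{lem:langforbwords0}. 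Applying Corollary \ref{cor:forbidden} together with the Deletion Condition, one peels an initial or terminal factor from $\{s\} \cup A(s)$ off $w$, leaving a shorter geodesic whose image still centralizes $s$, and concludes by induction. Once the generation statement holds, the fact that $C(s)$ is a Coxeter group on the listed generators follows from the Dyer--Deodhar theorem on reflection subgroups, since every element of $\{s\} \cup A(s)$ is an involution conjugate to a simple reflection of $G$.

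For part (ii), let $W = (c_1,\ldots,c_k)$ be a geodesic over $\{s\} \cup A(s)$ with $S$-expansion $w$, and suppose $w$ is not $S$-geodesic. The Deletion Condition yields indices $i < j$ whose removal preserves $\pi(w)$. Each block $a_{t,s}$ is an alternating word of length $m_{s,t}-1$ in the dihedral parabolic $\langle s, t\rangle$, hence reduced in that parabolic, hence $S$-geodesic in $G$; similarly $s$ is trivially geodesic. Thus the two deleted positions cannot lie in a single block. Analyzing how the deletion bridges distinct blocks, and using that the relations of $G$ are all of even exponent to push the deleted letters through intermediate blocks while preserving block boundaries, one rewrites $\pi(W)$ as an element of $\langle s\rangle \cup A(s)$-length strictly less than $k$, contradicting the assumed geodesicity of $W$ in $C(s)$.

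For part (iii), the tree hypothesis means that no two distinct $t, t' \in \Link(s)$ are adjacent in $\Gamma$, so $m_{t,t'} = 0$. By (i), $C(s)$ is a Coxeter group on $\{s, a_1, \ldots, a_n\}$, and the relations $s^2 = a_i^2 = 1$ and $[s, a_i] = 1$ are already in hand. It remains to rule out any nontrivial relation between distinct $a_i, a_j$. Restricting to the rank-three standard parabolic $P = \langle s, t_i, t_j\rangle$ of $G$—itself a Coxeter group with $m_{t_i,t_j} = \infty$—and applying (i) to $P$, the centralizer of $s$ in $P$ is generated by $\{s, a_i, a_j\}$. Since $t_i t_j$ has infinite order in $P$ (visible in the canonical Tits representation, where the roots of $t_i$ and $t_j$ have non-real inner product), the product $a_i a_j$ also has infinite order, giving $m(a_i, a_j) = 0$ and the claimed presentation.

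The main obstacle will be the peeling step in (i): given a geodesic $w$ with $\pi(w) \in C(s)$, one must show that the failure of $(s, w, s)$ to be geodesic, combined with evenness, forces either the first or the last letter of $w$ to begin an $a_{t,s}$-block for some $t \in \Link(s)$. This requires a delicate interaction between the Deletion Condition, the abelianization Lemma \ref{lem:abelianized}, and the dihedral structure inside each $\langle s, t\rangle$—exactly the content of the Bahls--Mihalik analysis referenced in the Appendix.
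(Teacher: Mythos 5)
Your part (ii) is where the real content of this theorem lives, and it is not proved. ``Analyzing how the deletion bridges distinct blocks'' and ``pushing the deleted letters through intermediate blocks while preserving block boundaries'' is a description of what must be shown, not an argument. The paper's proof isolates exactly what that analysis requires: Lemma \ref{lem:fact1}(b) shows that if $(a_1,\dots,a_n,t)$ fails to be geodesic then, by the abelianization Lemma \ref{lem:abelianized}, the cancelling letter inside some block $a_i$ must itself be a $t$, so $a_i\equiv a_{t,s}$; by Corollary \ref{cor:length} the cancellation must occur at the first or last letter of that block; and a further length count (prepending an $s$) rules out the first letter. Only then does one get the clean conclusion that $t$ --- and hence the whole block $a_{t,s}$, since $s$ commutes with everything in sight --- commutes with the suffix $a_{i+1}\cdots a_n$, which is what lets Lemma \ref{lem:fact3} shorten the word over $A(s)$ and contradict geodesicity in $C(s)$. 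None of these steps is routine, and your sketch identifies none of them. (For (i) you candidly flag the ``peeling'' step as the main obstacle; note that the paper does not reprove (i) either but cites Brink via \cite{Bahls}, so quoting that result is legitimate, whereas leaving it as an acknowledged obstacle is not a proof.)

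For (iii) you take a genuinely different route (root systems and parabolic restriction, versus the paper's combinatorial argument that a relation $(a_{t,s}a_{r,s})^n=1$ would, via Lemma \ref{lem:fact1}, force a geodesic suffix with $\supp(\cdot)\subseteq\Star(t)$ to contain $r$, impossible when $\Star(s)$ spans a tree). But your decisive step is wrong as stated: what must be shown is that the product of the two \emph{reflections} $a_i=a_{t_i,s}$ and $a_j=a_{t_j,s}$ has infinite order, and this does not follow formally from $t_it_j$ having infinite order; moreover ``the roots of $t_i$ and $t_j$ have non-real inner product'' is not meaningful, since the Tits form is real-valued. The correct criterion is $|B(\beta_i,\beta_j)|\ge 1$ for the positive roots $\beta_i,\beta_j$ of $a_i,a_j$. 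Using $B(\alpha_s,\beta_i)=0$ (because $a_i$ centralizes $s$) and $B(\alpha_{t_i},\alpha_{t_j})\le -1$, one can compute
$$B(\beta_i,\beta_j)=-\frac{1+\cos(\pi/m_i)\cos(\pi/m_j)}{\sin(\pi/m_i)\sin(\pi/m_j)}\le -1,$$
which would close the gap --- but that computation is the whole proof of (iii) on your route, and it is absent from your write-up.
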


\begin{proof}
(i) is a restatement of  Brink's result \cite{Brink} in terms of generators, and it can be found in \cite[Theorem 2.6]{Bahls}.

(ii) is the content of the unpublished  \cite[Theorem 4.10]{BahlsMihalik} and we prove it in the appendix \ref{s:centralizers} for completeness.

(iii) follows from Brink's description of the centralizers.
\end{proof}

In view of the Theorem \ref{thm:centralizers} and Lemma \ref{lem:langforbwords0} we can give a more accurate description of the set of forbidden words

\begin{Lem}\label{lem:langforbwords}
Let $(G,S)$ be an even Coxeter system. For $s\in S$ let 
$$U(s)=\{(s,\word,s) : \word \text{ is a geodesic word in  } A(s) \}.$$ 
Then $\mathcal{G}$, the language of geodesics of $G$ over $S$, is determined by the set of forbidden words $U=U(G,S)\coloneqq \cup_{s\in S} U(s)$.\end{Lem}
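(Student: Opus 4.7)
The strategy is to show that the set $U$ defined in this lemma coincides with the set
$$U_{\text{old}}\coloneqq\bigcup_{s\in S}\{(s,w,s):(w,s)\text{ is }S\text{-geodesic and }\pi(w)\in C(s)\}$$
from Lemma \ref{lem:langforbwords0}; the conclusion then follows immediately from that lemma. The structural input I rely on is that, in the even case, every $m_{s,t}$ is even, so $s$ commutes with every $a_{t,s}$. Combined with Theorem \ref{thm:centralizers}(i), this makes $\{s\}$ and $A(s)$ disconnected in the Coxeter diagram of $(C(s),\{s\}\cup A(s))$, so $C(s)=\langle s\rangle\times\langle A(s)\rangle$ as Coxeter groups. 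Consequently, every $\pi(w)\in C(s)$ factors uniquely as $s^{\epsilon}a$ with $\epsilon\in\{0,1\}$ and $a\in\langle A(s)\rangle$; moreover, appending an $s$ on either side of an $A(s)$-geodesic expression of $a$ yields an $\{s\}\cup A(s)$-geodesic for $sa$ of length one more.

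For $U\subseteq U_{\text{old}}$, given $(s,w,s)\in U$ we have $\pi(w)\in\langle A(s)\rangle\subseteq C(s)$. Using the observation above together with Theorem \ref{thm:centralizers}(ii), the $S$-expansion of an $\{s\}\cup A(s)$-geodesic for $\pi(w)s$ is $S$-geodesic of length $|w|_S+1$, hence $(w,s)$ is $S$-geodesic.

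For the reverse inclusion, let $(s,w,s)\in U_{\text{old}}$ and write $\pi(w)=s^{\epsilon}a$ with $a\in\langle A(s)\rangle$. By Theorem \ref{thm:centralizers}(ii) applied to appropriate $\{s\}\cup A(s)$-geodesics, $\ell_S(\pi(w))=\epsilon+\ell_S(a)$ and $\ell_S(\pi(w)s)=(1-\epsilon)+\ell_S(a)$, the latter because $s^2=1$ and $s$ commutes with $a$. If $\epsilon=1$ this forces $|ws|_S=|w|_S-1$, contradicting $(w,s)$ being $S$-geodesic. Hence $\epsilon=0$, so $\pi(w)\in\langle A(s)\rangle$ and $(s,w,s)\in U$.

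\textbf{Main obstacle.} The only technical delicacy is keeping the length conversions between the $\{s\}\cup A(s)$-metric on $C(s)$ and the $S$-metric on $G$ straight via Theorem \ref{thm:centralizers}(ii), and ensuring that the factorization $\pi(w)=s^{\epsilon}a$ is genuinely unique, which requires $s\notin\langle A(s)\rangle$ inside $C(s)$. Both points are guaranteed by the direct product structure coming from the disconnectedness of $s$ from $A(s)$ in the Coxeter diagram of $C(s)$; once this is in hand, both inclusions are length bookkeeping arguments.
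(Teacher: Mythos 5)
Your forward inclusion $U\subseteq U_{\mathrm{old}}$ (where $U_{\mathrm{old}}$ denotes the forbidden set of Lemma \ref{lem:langforbwords0}) is correct, and the observation that $C(s)=\langle s\rangle\times\langle A(s)\rangle$ with additive word length over $\{s\}\cup A(s)$ is a genuinely useful way to organize the length bookkeeping. The gap is in the reverse inclusion. Membership of $(s,\word,s)$ in $U(s)$ requires the word $\word$ itself to be the $S$-expansion of a word $(a_{t_1,s},\dots,a_{t_n,s})$ over the alphabet $A(s)$ that is geodesic over that alphabet --- this is how $U$ is used afterwards (Lemma \ref{lem:intersect} and Figure \ref{fig:wordw} take $\omega_i\in A(s)^*$). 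Your argument only establishes that the \emph{element} $\pi(\word)$ lies in $\langle A(s)\rangle$ (i.e.\ that $\epsilon=0$); it does not show that the particular geodesic spelling $\word$ is such a concatenation, and an element of $\langle A(s)\rangle$ can admit geodesic $S$-spellings that are not concatenations of the $a_{t,s}$.

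This is not a removable technicality. Take $S=\{s,t,r\}$ with $m_{s,t}=4$, $m_{s,r}=m_{t,r}=2$, so $A(s)=\{(t,s,t),(r)\}$ and $\langle A(s)\rangle\cong(\Z/2\Z)^2$. The word $\word\equiv(t,s,r,t)$ is a geodesic spelling of $a_{t,s}a_{r,s}=tstr$ with $(\word,s)$ geodesic, so $(s,t,s,r,t,s)\in U_{\mathrm{old}}$; but $(t,s,r,t)$ is not a concatenation of $(t,s,t)$ and $(r)$, and one checks directly that the non-geodesic word $(s,t,s,r,t,s)$ contains no subword of $U$ at all. Hence $U\neq U_{\mathrm{old}}$ and $U$ fails to determine $\mathcal{G}$ for this (non-triangle-free) even system, so no length computation alone can close the gap: the identification really uses triangle-freeness. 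In that case distinct $t,t'\in\Link(s)$ are non-adjacent, the $S$-expansion of a reduced $A(s)$-word contains no alternating $\{s,t\}$-subword of length $m_{s,t}$ and no $\{t,t'\}$-braid subword, so by Tits'/Matsumoto's theorem it is the \emph{unique} geodesic spelling of its image; only then does ``$\pi(\word)\in\langle A(s)\rangle$ and $(\word,s)$ geodesic'' force $\word$ to be such an expansion. (Alternatively, one can reach the same conclusion via Lemmas \ref{lem:fact1} and \ref{lem:fact3} of the appendix.) The paper offers no written proof of this lemma and also omits the triangle-free hypothesis from its statement, but every subsequent use of it assumes that hypothesis; your write-up should do the same and should address the word-versus-element issue explicitly.
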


In order to apply the technique of \cite{GN} we have to understand the (rigid) chains of forbidden words. A  word is called a  {\it chain} if, for some positive integer $m$, it admits a  ``staggered presentation of forbidden words'' $(u_{i})_{i=1}^m$ 
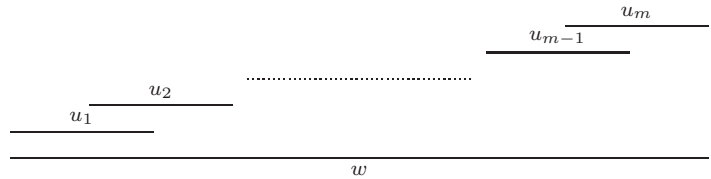
\begin{figure}[h]
\centerline{
\xymatrix@R=4pt{
& & & & & &  &\ar@{-}[rr]^{u_m}& &\\
& & & & & & \ar@{-}[rr]^{u_{m-1}} && &\\
& & & \ar@{..}[rrr]& & & & & &\\
& \ar@{-}[rr]^{u_2}  & &  & & & & & &\\
\ar@{-}[rr]^{u_1}  & & & & & & && &\\
\ar@{-}[rrrrrrrrr]_{w}& & & & & & && &}
}
\caption{Staggered presentation of forbidden words for $w$}
\label{fig:chainpres}
\end{figure}

\noindent where each line in Figure \ref{fig:chainpres} corresponds to an occurrence in $w$ of a forbidden subword $u_i\in U$, $i\in\{1,\dots,m\}$  such that  the intersection of two subwords $u_i$ and $u_j$ is non empty if and only if $|i-j|\leq 1.$

We introduce the formal definition of a chain in  Definition \ref{def:chain}. Before we need the following concept.
\begin{Def}
Let $w_1,w_2$ be two words over some alphabet. We define the {\it intersection} of $w_1$ and $w_2$, denoted $w_1\sqcap w_2$ to be the set of words $w_3$ that are a suffix of $w_1$ and a prefix of $w_2$. We say that a word $w$ is {\it the amalgamation of $w_1$ and $w_2$ over $w_3$}, if $w_3\in w_1 \sqcap w_2$ and $w\equiv(w_1',w_3,w_2')$ where $w_1'$ (resp. $w_2'$) is obtained from $w_1$ (resp. $w_2$) by deleting the suffix (resp. prefix) $w_3$. We will write $w\equiv w_1*_{w_3}w_2$.
\end{Def}

\begin{Def}[Chain]\label{def:chain}
A chain is a triple $(w, (u_{i})_{i=1}^m,(z_{i})_{i=1}^{m-1})$, where each $u_i$ is a forbidden word, $z_{i}$ is a non-empty word of $u_i\sqcap u_{i+1}$, $z_1\neq u_1$,  $|z_i|+|z_{i+1}|\leq|u_{i+1}|$, and $w=((\dots (u_1*_{z_1}u_2)*_{z_2}u_3)\dots *_{z_{m-1}}u_m$.
The length of the chain is the length of $w$.

A  {\it rigid chain of rank m} is defined by induction on $m$.
A  rigid chain of rank 1 is a forbidden word. A chain $(w, (u_{i})_{i=1}^2,(z_{i})_{i=1}^{1})$ of rank 2 is rigid if $w$ contains no forbidden subwords but  $u_1$ and $u_2$. Suppose that the rigid chains of rank $m-1$ have already been defined. Then a chain $(w, (u_{i})_{i=1}^m,(z_{i})_{i=1}^{m-1})$ is rigid of rank $m$ if $(w', (u_{i})_{i=1}^{m-1},(z_{i})_{i=1}^{m-2})$ is a rigid chain of rank $m-1$  and  the suffix of $w$ following $u_{m-2}$ contains  no forbidden subword except $u_m$.
\end{Def}

The relation between the complete growth of a language and the rigid chains is given in \cite[Theorem 2]{GN}. Using the ring homomorphism map $\Z[S^*]\to \Z$, $w\mapsto 1$ for all $w \in S^*$, we obtain the following key result. 

\begin{Thm}\label{thm:criterium}Let $\mathcal{L}\subseteq S^*$ be a language determined by a set of forbidden words $U$ satisfying (U1) and (U2). The growth function of $\mathcal{L}$ is completely determined by $|S|$ and the values of $|Q_n^{(m)}|,$ the number of rigid chains of rank $m$ and length $n$.\end{Thm}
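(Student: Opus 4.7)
The plan is to invoke \cite[Theorem 2]{GN} in its ambient noncommutative form and then push the resulting identity down to the ring $\Z[[z]]$ via the length-grading homomorphism mentioned in the statement. In this way, the combinatorial complexity of the language $\mathcal{L}$ is repackaged into the counts of rigid chains of each rank and length, and no other information about $S$ or $U$ survives.

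Concretely, I would first recall the Grigorchuk--Nagnibeda theorem, which expresses the characteristic series $\chi_\mathcal{L}=\sum_{w\in\mathcal{L}}w$, viewed as an element of the noncommutative power series ring $\Z\langle\langle S\rangle\rangle$, via an inclusion--exclusion identity involving $\chi_{S^*}=\sum_{w\in S^*}w$ and the formal sums $R^{(m)}=\sum_{w\in Q^{(m)}}w$, where $Q^{(m)}$ denotes the set of rigid chains of rank $m$. Schematically, one obtains an identity of the form
\begin{equation*}
\chi_{S^*} \;=\; \chi_\mathcal{L}\cdot\Bigl(1 + \sum_{m\geq 1}(-1)^{m-1}R^{(m)}\Bigr),
\end{equation*}
the convolution being the concatenation product. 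I would then apply the continuous ring homomorphism $\Z\langle\langle S\rangle\rangle\to \Z[[z]]$ that sends each $s\in S$ to $z$ (equivalently, every $w\in S^*$ to $z^{|w|}$); this is the length-graded refinement of the augmentation $w\mapsto 1$ alluded to in the statement. Under this map, $\chi_{S^*}$ becomes $\frac{1}{1-|S|z}$ and each $R^{(m)}$ becomes $\sum_{n}|Q_n^{(m)}|\,z^n$. The identity above descends to an equation in $\Z[[z]]$ whose coefficients involve only $|S|$ and the integers $|Q_n^{(m)}|$, and solving for $\mathcal{L}(z)$ expresses the growth series of $\mathcal{L}$ as a function of these data alone.

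The main point to verify is that the length-grading homomorphism commutes with the infinite summation on the right-hand side of the GN identity. This reduces to a routine convergence check in the length filtration: each forbidden word has length at least $2$ (by (U1)), so a rigid chain of rank $m$ has length that grows with $m$, and both sides are $z$-adically summable. Once the passage to $\Z[[z]]$ is justified, everything else is formal, and the conclusion that $\mathcal{L}(z)$ depends on $|S|$ and the $|Q_n^{(m)}|$ alone is immediate from the form of the identity.
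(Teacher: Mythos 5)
Your proposal is correct and follows essentially the same route as the paper, which derives the theorem in one line by citing \cite[Theorem 2]{GN} and projecting the noncommutative complete-growth identity to a commutative power series ring via the length/augmentation homomorphism. Your version simply spells out the details (the shape of the inclusion--exclusion identity, the specialization $s\mapsto z$, and the $z$-adic convergence check) that the paper leaves implicit.
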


We will use Theorem \ref{thm:criterium} to find two different even Coxeter system with the same geodesic growth.
%\begin{Cor}\label{cor:criterium}
%Let $\mathcal{L}_1,\mathcal{L}_2\subseteq S^*$ given by forbidden sets $U_1$ and $U_2$. Let $|Q_n^{(m)}|$ (resp. $|P_n^{(m)}|$) denote the number of words of length $n$ that are chains of rank $m$ for $\mathcal{L}_1$ (resp.  $\mathcal{L}_2$).  If for all $n,m$, $|Q_n^{(m)}|=|P_{n}^{(m)}|$, then $F_{\mathcal{L}_1}(z)=F_{\mathcal{L}_2}(z).$
%\end{Cor}

For the rest of the section $(G,S)$ will be an even Coxeter systems and $U=U(W,S)$ the set of forbidden words described in Lemma \ref{lem:langforbwords}.
\begin{Lem}\label{lem:intersect}
Let $(G,S)$ be a triangle-free, even Coxeter system.  
Let $u_1,u_2\in U$,  such that $u_1\equiv(s,\omega_1,s)$ and $u_2\equiv(t,\omega_2,t)$, where $\omega_1$ is a geodesic  word in $A(s)$ and $\omega_2$ is a geodesic word in $A(t)$. Suppose that $w_2\in u_1\sqcap u_2$ and $w_2$ is a proper subword of $u_1$ and $u_2$. Then we can write $u_1\equiv(w_1,w_2)$, $u_2\equiv(w_2,w_3)$. Let $w=u_1*_{w_2}u_2$.  

\begin{enumerate}
\item[(i)] If $s\neq t$ then $w_2\equiv(t,a_{s,t})$ if and only if  $u_1$ and $u_2$ are the only forbidden subwords of $w$ and $w_2$ is non-empty.
\item[(ii)] If $s=t$ then $u_1\equiv(s,s)$ or $u_2\equiv(s,s)$  if and only if  $u_1$ and $u_2$ are the only forbidden subwords of $w$  and $w_2$ is non-empty.

In this case, if any of the two equivalent conditions hold, $w_2$ is forced to be equal to $(s)$.
\end{enumerate}
\end{Lem}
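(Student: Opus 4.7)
My plan is to analyze forbidden subwords of $w=u_1*_{w_2}u_2$ by splitting them according to their position in the factorization $w=(w_1,w_2,w_3)$. Any forbidden subword of $w$ either lies entirely inside $u_1=(w_1,w_2)$ (whence it equals $u_1$ by the minimality property (U2)), entirely inside $u_2=(w_2,w_3)$ (whence it equals $u_2$), or \emph{straddles} $w_2$, meaning it starts strictly inside $w_1$ and ends strictly inside $w_3$. The whole lemma therefore reduces to controlling the straddling forbidden subwords. By Lemma~\ref{lem:langforbwords} any such $u_3$ has the form $(r,\omega_3,r)$ with $\omega_3$ a geodesic word in $A(r)$, and so all its letters lie in $\{r\}\cup\Link(r)$. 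Since $u_3$ contains $w_2$, which carries both $t$ (the first letter of $u_2$) and $s$ (the last letter of $u_1$), triangle-freeness forces $r\in\{s,t\}$ in case~(i)---because $s$ and $t$ are already adjacent and a third common neighbour would complete a triangle---and forces $r=s$ in case~(ii).

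For the forward directions, in case~(i) I assume $w_2\equiv(t,a_{s,t})$ and aim to show that every straddling $u_3=(s,\omega_3,s)$ collapses to $u_1$ (the $r=t$ case being symmetric). The alternating fragment $w_2=(t,s,t,s,\dots,t,s)$ of length $m_{s,t}$ sits inside $u_3$, and matching this fragment against the $A(s)$-syllable decomposition $\omega_3=(a_{r_1,s},\dots,a_{r_m,s})$ of $\omega_3$ forces exactly one syllable $a_{r_i,s}$ to equal $a_{t,s}$ and to be aligned with it. Triangle-freeness, which gives $\Link(s)\cap\Link(t)=\emptyset$, then determines the syllables to the left and to the right of this $a_{t,s}$ and forces $u_3=u_1$. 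In case~(ii), assuming without loss of generality $u_1\equiv(s,s)$, the only non-empty proper suffix of $u_1$ is $(s)$, so $w_2\equiv(s)$ is forced; a straddling $u_3=(s,\omega_3,s)$ would have to start at position~$1$ of $w$, hence its second letter would be $s$, but no element of $A(s)$ starts with $s$, ruling out such a $u_3$.

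For the converses I would show that a straddling forbidden subword inevitably appears when $w_2$ does not have the prescribed form. In case~(i), $w_2$ starts with $t$ and ends with $s$, so $|w_2|\geq 2$; triangle-freeness further bounds $|w_2|\leq m_{s,t}$, and when $|w_2|<m_{s,t}$ the amalgamation extends the alternating $s,t$-pattern enough to contain a properly shifted occurrence of $(s,a_{t,s},s)$ or $(t,a_{s,t},t)$. In case~(ii), if neither $u_i$ equals $(s,s)$ the matching constraint forces the second-to-last letter of $u_1$ and the second letter of $u_2$ to be a common element $r\in\Link(s)$, and the alternating $s,r$-pattern carried across the amalgamation produces a forbidden subword $(r,a_{s,r},r)$ straddling $w_2$. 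The main obstacle I expect is the careful syllable-alignment step inside a hypothetical straddling $u_3$ in the forward direction of~(i) and the parallel production of the shifted $(r,a_{s,r},r)$ in the converse of~(ii); both combine the parity information from the even Coxeter structure with triangle-freeness and form the combinatorial core of the argument.
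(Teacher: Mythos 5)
Your skeleton is the right one and matches the paper's: any forbidden subword of $w$ other than the two given occurrences must, by (U2), contain all of $w_2$ together with at least one letter of $w_1$ and one letter of $w_3$, and triangle-freeness then restricts the repeated letter $r$ of such a $u_3\equiv(r,\omega_3,r)$. Your converse of (i) and your forward direction of (ii) are essentially the paper's arguments. But your converse of (ii) does not work. The ``matching constraint'' you invoke --- that the second-to-last letter of $u_1$ must equal the second letter of $u_2$ --- only exists when $|w_2|\geq 3$; when $w_2\equiv(s)$ it identifies the last letter of $u_1$ with the first letter of $u_2$ and constrains nothing else. And in that case the implication you are trying to prove genuinely fails: in the right-angled Coxeter group on the path with vertices $p,s,q$ and edges $\{p,s\},\{s,q\}$, take $u_1\equiv(s,p,s)$, $u_2\equiv(s,q,s)$, $w_2\equiv(s)$, so $w\equiv(s,p,s,q,s)$. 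Here $U(s)$ consists of $(s,s)$ and the words $(s,\omega,s)$ with $\omega$ a nonempty alternating word in $p,q$, and $U(p)=\{(p,p),(p,s,p)\}$, $U(q)=\{(q,q),(q,s,q)\}$; checking subwords, $u_1$ and $u_2$ are the only forbidden subwords of $w$, yet neither equals $(s,s)$. So no argument can close this direction as stated. (You could not have known this, but the paper's own proof of the ``if part'' of (ii) only treats $|w_2|>1$ and silently skips exactly this configuration, so the defect is in the statement, not merely in your attempt; the third forbidden word does appear when the two adjacent syllables agree, e.g.\ $(p,s,p)$ inside $(s,p,s,p,s)$, but not when they differ.)

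Two further points. First, your claim that in case (ii) triangle-freeness forces $r=s$ for a straddling $u_3$ is not justified: containing the letter $s$ only gives $r\in\{s\}\cup\Link(s)$, and indeed the third forbidden word that actually arises in case (ii), when it arises, has $r\in\Link(s)$ (as in $(p,s,p)$ above). This happens not to damage your forward direction of (ii), since there a straddling $u_3$ must begin at the first letter of $w$ and hence contain the prefix $(s,s)$, which already contradicts (U2). Second, in the forward direction of (i) your target ``$u_3$ collapses to $u_1$'' is not quite the right one --- a straddling $u_3$ ends strictly after $u_1$ does, so what you must show is that no straddling $u_3$ exists --- and the syllable-alignment step you defer is where all the work lies. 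The paper's route is shorter: for $r=s$, the letter of $u_2$ immediately following the prefix $(t,a_{s,t})$ is the first letter $q$ of a second syllable $a_{q,t}$ of $\omega_2$ (if there is no second syllable, $u_3$ would properly contain $u_2$, contradicting (U2)); geodesicity of $\omega_2$ over $A(t)$ gives $q\notin\{s,t\}$ and $q\in\Link(t)$, while $q$ being a letter of $u_3\equiv(s,\dots,s)$ gives $q\in\Link(s)$, so $\{s,t,q\}$ spans a triangle. I recommend adopting that argument rather than attempting the full alignment analysis.
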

\begin{proof}
The diagram in Figure \ref{fig:wordw} depicts the relation between $w$ and the subwords involved in the Lemma.
\begin{figure}[h]

\begin{center}
\setlength{\unitlength}{3947sp}%
\begingroup\makeatletter\ifx\SetFigFont\undefined%
\gdef\SetFigFont#1#2#3#4#5{%
  \reset@font\fontsize{#1}{#2pt}%
  \fontfamily{#3}\fontseries{#4}\fontshape{#5}%
  \selectfont}%
\fi\endgroup%
\begin{picture}(4732,1583)(1164,-3383)
\thinlines
{\color[rgb]{0,0,0}\put(3064,-3136){\line( 1, 0){823}}
}%
{\color[rgb]{0,0,0}\put(4004,-3136){\line( 1, 0){117}}
\put(4121,-3136){\line( 1, 0){118}}
\put(4239,-3136){\line( 1, 0){117}}
\put(4356,-3136){\line( 1, 0){118}}
\put(4474,-3136){\line( 1, 0){117}}
\put(4591,-3136){\line( 1, 0){118}}
\put(4709,-3136){\line( 1, 0){117}}
\put(4826,-3136){\line( 1, 0){118}}
\put(4944,-3136){\line( 1, 0){117}}
\put(5061,-3136){\line( 1, 0){118}}
\put(5179,-3136){\line( 1, 0){117}}
\put(5296,-3136){\line( 1, 0){118}}
\put(5414,-3136){\line( 1, 0){117}}
\put(5531,-3136){\line( 1, 0){118}}
\put(5649,-3136){\line( 1, 0){117}}
\put(5766,-3136){\line( 1, 0){118}}
}%
{\color[rgb]{0,0,0}\put(3064,-3371){\line( 1, 0){2820}}
}%
{\color[rgb]{0,0,0}\put(3299,-2901){\line( 1, 0){2350}}
}%
{\color[rgb]{0,0,0}\put(5766,-2901){\line( 1, 0){118}}
}%
{\color[rgb]{0,0,0}\put(3064,-2901){\line( 1, 0){118}}
}%
{\color[rgb]{0,0,0}\put(1185,-2667){\line( 1, 0){4699}}
}%
{\color[rgb]{0,0,0}\put(1420,-2432){\line( 1, 0){2349}}
}%
{\color[rgb]{0,0,0}\put(3887,-2432){\line( 1, 0){117}}
}%
{\color[rgb]{0,0,0}\put(1185,-2432){\line( 1, 0){117}}
}%
{\color[rgb]{0,0,0}\put(1185,-2197){\line( 1, 0){117}}
\put(1302,-2197){\line( 1, 0){118}}
\put(1420,-2197){\line( 1, 0){117}}
\put(1537,-2197){\line( 1, 0){118}}
\put(1655,-2197){\line( 1, 0){117}}
\put(1772,-2197){\line( 1, 0){118}}
\put(1890,-2197){\line( 1, 0){117}}
\put(2007,-2197){\line( 1, 0){117}}
\put(2124,-2197){\line( 1, 0){118}}
\put(2242,-2197){\line( 1, 0){117}}
\put(2359,-2197){\line( 1, 0){118}}
\put(2477,-2197){\line( 1, 0){117}}
\put(2594,-2197){\line( 1, 0){118}}
\put(2712,-2197){\line( 1, 0){117}}
\put(2829,-2197){\line( 1, 0){118}}
\put(2947,-2197){\line( 1, 0){117}}
}%
{\color[rgb]{0,0,0}\put(3182,-2197){\line( 1, 0){822}}
}%
{\color[rgb]{0,0,0}\put(1185,-1962){\line( 1, 0){2819}}
}%
\put(3182,-2138){\makebox(0,0)[lb]{\smash{{\SetFigFont{9}{10.8}{\rmdefault}{\mddefault}{\updefault}{\color[rgb]{0,0,0}$w_2\in u_1\sqcap u_2$}%
}}}}
\put(3182,-2629){\makebox(0,0)[lb]{\smash{{\SetFigFont{9}{10.8}{\rmdefault}{\mddefault}{\updefault}{\color[rgb]{0,0,0}$w$}%
}}}}
\put(2130,-2389){\makebox(0,0)[lb]{\smash{{\SetFigFont{9}{10.8}{\rmdefault}{\mddefault}{\updefault}{\color[rgb]{0,0,0}$\omega_1\in A(s)^*$}%
}}}}
\put(1416,-2148){\makebox(0,0)[lb]{\smash{{\SetFigFont{9}{10.8}{\rmdefault}{\mddefault}{\updefault}{\color[rgb]{0,0,0}$w_1$}%
}}}}
\put(5262,-3082){\makebox(0,0)[lb]{\smash{{\SetFigFont{9}{10.8}{\rmdefault}{\mddefault}{\updefault}{\color[rgb]{0,0,0}$w_3$}%
}}}}
\put(2471,-1919){\makebox(0,0)[lb]{\smash{{\SetFigFont{9}{10.8}{\rmdefault}{\mddefault}{\updefault}{\color[rgb]{0,0,0}$u_1\in U$}%
}}}}
\put(3081,-2859){\makebox(0,0)[lb]{\smash{{\SetFigFont{9}{10.8}{\rmdefault}{\mddefault}{\updefault}{\color[rgb]{0,0,0}$t$}%
}}}}
\put(5772,-2864){\makebox(0,0)[lb]{\smash{{\SetFigFont{9}{10.8}{\rmdefault}{\mddefault}{\updefault}{\color[rgb]{0,0,0}$t$}%
}}}}
\put(3887,-2389){\makebox(0,0)[lb]{\smash{{\SetFigFont{9}{10.8}{\rmdefault}{\mddefault}{\updefault}{\color[rgb]{0,0,0}$s$}%
}}}}
\put(1179,-2389){\makebox(0,0)[lb]{\smash{{\SetFigFont{9}{10.8}{\rmdefault}{\mddefault}{\updefault}{\color[rgb]{0,0,0}$s$}%
}}}}
\put(4239,-2853){\makebox(0,0)[lb]{\smash{{\SetFigFont{9}{10.8}{\rmdefault}{\mddefault}{\updefault}{\color[rgb]{0,0,0}$\omega_2\in A(t)^*$}%
}}}}
\put(3123,-3077){\makebox(0,0)[lb]{\smash{{\SetFigFont{9}{10.8}{\rmdefault}{\mddefault}{\updefault}{\color[rgb]{0,0,0}$w_2\in u_1\sqcap u_2$}%
}}}}
\put(4233,-3322){\makebox(0,0)[lb]{\smash{{\SetFigFont{9}{10.8}{\rmdefault}{\mddefault}{\updefault}{\color[rgb]{0,0,0}$u_2\in U$}%
}}}}
\end{picture}%

\caption{The word $w$ and the subwords $u_1$, $u_2$, $w_1$, $w_2$, $w_3$, $\omega_1$ and $\omega_2$.}\label{fig:wordw}
\end{center}

\end{figure}

(i) We first prove the {\it only if} part. Assume that $w_2\equiv(t,a_{s,t})$. By (U2), if there is a third forbidden subword $u_3$, it must  contain a suffix of $u_1$ longer than  $w_2$ (but shorter than $u_1$) and a prefix of $u_2$ longer than $w_2$ (but shorter than $u_2$). %So we can assume that both $\omega_1$ and $\omega_2$ are non-empty.
Suppose that $u_3=(r,\omega_3,r)$. As $u_3$ contains  $w_{2}$, we have that  $s,t\in \Star(r).$ Also, $t,r\in \Star(s)$ and $s,r\in \Star (t).$ Since $(G,S)$ is triangle-free and $t\neq s$, either $t=r$ or $s=r$.

We can assume $r=s$, the case $r=t$ is similar. Suppose that $u_1 \equiv (...,a_{p,s},a_{t,s},s)$ and $u_2 \equiv (t,a_{s,t},a_{q,t},...)$. Since such $u_3=(s,\dots,s)$ contains a prefix of $u_2$ longer than $w_2$, $q\in \Star(s)$. Since $\omega_2$ is a geodesic word over $A(t)$, Theorem \ref{thm:centralizers}(iii) implies that $q\neq s$, $q\neq t$, and $q \in \Link(t)$.  Then  $s, t \in \Star(q)$,  and we found a triangle, which is a contradiction of the hypothesis.

We now prove the {\it if} part. Since $w_2$ is non-empty, $u_1\equiv(\dots, a_{t,s},s)$, $u_2\equiv(t,a_{s,t},\dots)$, 
\begin{equation}\label{eq:intersec1}
w\equiv(s,\dots, \lefteqn{\overbrace{\phantom{t,s,\dots,s,t,\dots,t}}^{a_{t,s}}}t,s,\dots,
\underbrace{s,t,\dots,t,s,\dots, t,s}_{a_{s,t}},\dots, t)
\end{equation}
and, as suggested by \eqref{eq:intersec1},  there is a third forbidden subword in $w$ if $w_2$ is shorter than $(t, a_{s,t}) $. If $w_2$ is longer, then $u_2=(t, a_{s,t},a_{r,t},\dots,)$ and $r$ appears in $w_2$. Since $\omega_2$ is a geodesic word on $A(t)$, by Theorem \ref{thm:centralizers}(iii), $r\neq s$ and $r\neq t$ and $r\in \Link(t)$. Since $r$ appears in $w_2$, a subword of $u_1$, we have that $r\in \Link(s)$,  and we found a triangle, which leads to contradiction.

(ii) We first prove the {\it only if} part. By (U2), if there is a third forbidden subword $u_3$, it must  contain a suffix of $u_1$ longer than $(s)$ (but shorter than $u_1$) and a prefix of $u_2$ longer than $(s)$ (but shorter than $u_2$). Then, since $w_2$ is a proper subword of $u_1$ and $u_2$, $u_1\not\equiv(s,s)$ and $u_2\not\equiv(s,s)$.

We now prove the {\it if} part. As $w_2$ is non-empty, if $w_2$ has length greater than 1, then $u_1\equiv(\dots, a_{t,s},s)$, $u_2\equiv(s,a_{t,s},\dots)$, and 
\begin{equation}\label{eq:intersec2} 
w\equiv(s,\dots, \lefteqn{\overbrace{\phantom{t,s,\dots,s,t,\dots,t}}^{a_{t,s}}}t,s,\dots,
\underbrace{s,t,\dots,t,s,\dots, t,s}_{a_{s,t}},\dots, t)
\end{equation}
and, as suggested by \eqref{eq:intersec2}, there is third forbidden subword. 
\end{proof}

\begin{Def}\label{Def:seq}
For a sequence $(u_i)_{i=1}^m$ in $U$ we define the following properties
\begin{enumerate}
\item[(R1)] if $u_i\equiv (s,\dots,s)$ and $u_{i+1}\equiv(s,\dots, s)$, then $u_i$ or $u_{i+1}$ is $(s,s)$.
\item[(R2)] if $u_i\equiv (s,\dots,s)$,  $u_{i+1}\equiv(t,\dots, t)$ and $s\neq t$ then $u_i\equiv(s,\dots,a_{t,s},s)$ and $u_{i+1}\equiv(t,a_{s,t},\dots,t)$. 
\item[(R3)] if $u_{i-1}\equiv(t,\dots,t),$ $u_i\equiv (s,\dots,s)$,  $u_{i+1}\equiv(t,\dots, t)$ and $s\neq t$ then $|u_i|> 2+ 2|a_{t,s}|$. \end{enumerate}
\end{Def}
In the next two lemmas we will show that $(w,(u_i)_{i=1}^m,(z_i)_{i=1}^{m-1})$ is a rigid chain if and only if  $(u_i)_{i=1}^m$ satisfy (R1), (R2) and (R3).

\begin{Lem}\label{Lem:rigidsequence1}
Let $(G,S)$ be triangle-free, even Coxeter system.  
Let $(u_i)_{i=1}^m$ be a sequence in $U$ satisfying {\rm (R1),(R2)} and {\rm (R3)}. Then there exits a unique  chain  having $(u_i)_{i=1}^m$ as sequence of forbidden words. Moreover, this chain is rigid.
\end{Lem}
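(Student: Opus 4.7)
The plan is to construct the required chain from $(u_i)_{i=1}^m$ by defining each overlap $z_i$ explicitly, verify the three defining conditions of a chain, and then establish rigidity by induction on $m$; uniqueness will follow from Lemma~\ref{lem:intersect}. For each $i$, let $u_i \equiv (s,\ldots,s)$ and $u_{i+1} \equiv (t,\ldots,t)$. If $s=t$, condition (R1) forces one of $u_i, u_{i+1}$ to equal $(s,s)$, and Lemma~\ref{lem:intersect}(ii) then uniquely dictates $z_i=(s)$. If $s\neq t$, condition (R2) makes the suffix of length $m_{s,t}$ of $u_i$ and the prefix of the same length of $u_{i+1}$ both equal to the alternating word $(t,s,t,s,\ldots,t,s)$, and Lemma~\ref{lem:intersect}(i) then gives $z_i=(t,a_{s,t})$. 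Define $w \equiv u_1 *_{z_1} u_2 *_{z_2} \cdots *_{z_{m-1}} u_m$.

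For the chain conditions, each $z_i$ is non-empty by construction, and $z_1 \neq u_1$ because either $|z_1|=1<|u_1|$ or $z_1$ starts with a different letter than $u_1$. The inequality $|z_i|+|z_{i+1}|\leq|u_{i+1}|$ is checked by case-splitting on the outer letters $(r,s,t)$ of $(u_i,u_{i+1},u_{i+2})$: when $r=s=t$ it reduces to $2\leq|u_{i+1}|$; when exactly one of $r=s$ or $s=t$ holds it reduces to $|u_{i+1}|\geq 1+m_{s,t}$ (or $1+m_{r,s}$), which follows from (R2) since $u_{i+1}$ must contain the corresponding $a_{\cdot,s}$ internally; when $r=t\neq s$ it becomes $|u_{i+1}|>2m_{s,t}$, precisely the content of (R3); and when $r,s,t$ are pairwise distinct, (R2) applied to both pairs forces $u_{i+1}$ to begin with $s,a_{r,s}$ and end with $a_{t,s},s$, and since $a_{r,s}\neq a_{t,s}$ are distinct generators of $A(s)$, these segments do not overlap and $|u_{i+1}|\geq m_{r,s}+m_{s,t}=|z_i|+|z_{i+1}|$.

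Rigidity and uniqueness are obtained together by induction on $m$. The base cases $m=1,2$ are immediate (rank $2$ is Lemma~\ref{lem:intersect} together with the construction of $z_1$). For $m\geq 3$, assume the rank $m-1$ chain built from $(u_1,\ldots,u_{m-1})$ is rigid with associated word $w'$, so $w=w'*_{z_{m-1}}u_m$. One must show that every forbidden subword $v$ entirely inside the suffix of $w$ following $u_{m-2}$ equals $u_m$. Because $u_{m-1}$ starts inside $u_{m-2}$, any such $v$ starts strictly past the start of $u_{m-1}$. If $v$ lies entirely in the positions of $u_m$, then (U2) yields $v=u_m$; if $v$ lies entirely in $u_{m-1}$, it would be a proper subword of $u_{m-1}$, contradicting (U2); otherwise $v$ is a subword of $u_{m-1}*_{z_{m-1}}u_m$ straddling the two, so Lemma~\ref{lem:intersect}---whose hypotheses for the pair $u_{m-1},u_m$ reduce to (R1) or (R2)---restricts $v\in\{u_{m-1},u_m\}$, and the location of its start rules out $v=u_{m-1}$. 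Uniqueness of the chain is then automatic: any alternative $z_i$ would, by Lemma~\ref{lem:intersect}, create an extra forbidden subword inside $u_i*_{z_i}u_{i+1}$ and break the rigidity just proved. The main subtlety lies in the rigidity step's bookkeeping at the junction of $u_{m-1}$ and $u_m$, where the length inequalities verified in the previous paragraph are precisely what prevent an unforeseen forbidden subword from spanning that junction.
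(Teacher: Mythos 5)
Your proposal is correct and follows essentially the same route as the paper: you extract the unique overlaps $z_i$ from Lemma \ref{lem:intersect} using (R1)/(R2), verify $z_1\neq u_1$ and $|z_i|+|z_{i+1}|\leq |u_{i+1}|$ by the same case split on the outer letters (with (R3) handling the $r=t\neq s$ case), and conclude rigidity. The only difference is that you spell out the inductive rigidity check that the paper leaves to the reader, which is a welcome addition rather than a deviation.
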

\begin{proof}
By Lemma  \ref{lem:intersect}  a sequence $(u_i)_{i=1}^m$ in $U$ satisfing (R1) and (R2) also satisfies that, for all $i$, $u_i$ and $u_{i+1}$ has a unique non-empty intersection $z_i$ such that $z_i$ is a proper subword of $u_i$ and $u_{i+1}$, and $u_{i}*_{z_i}u_{i+1}$ contains only $u_i$ and $u_{i+1}$ as forbidden subwords. Then the sequence $(z_i)_{i=1}^{m-1}$ is uniquely determined.

Since $z_1$ is a proper subword of $u_1$,  $z_1\neq u_1$. To check that $|z_i|+|z_{i+1}|\leq|u_{i+1}|$, suppose that $u_{i-1}\equiv(r,\dots,r)$, $u_{i}\equiv(s,\dots,s)$ and $u_{i+1}\equiv(t,\dots,t)$. 

If $u_i\equiv(s,s)$, then $r=s$, $t=s$, $z_{i-1}=(s)$ is the first letter of $u_i$ and $z_{i}=(s)$ the last letter of $u_i$. Then $|z_{i-1}|+|z_{i}|\leq|u_{i}|$.

If $u_i\not\equiv(s,s)$, and  $r=s$,  by (R1) and  Lemma  \ref{lem:intersect}, $z_{i-1}\equiv(s)$ is the first letter of $u_i$ and $z_{i}$ is a proper subword of $u_i$, then $|z_{i-1}|+|z_{i}|\leq|u_{i}|$.

If $u_i\not\equiv(s,s)$, and  $t=s$, we argue as in the previous paragraph.

If $u_i\not\equiv(s,s)$, and  $r\neq s$ and $t\neq s$  by (R2) and  Lemma  \ref{lem:intersect}, $z_{i-1}\equiv(a_{r,s},r)$ and $z_{i}\equiv(t,a_{s,t})$. If $r\neq t$, then $u_i\equiv (s, a_{r,s},\dots, a_{t,s},s)$ and then $|z_{i-1}|+|z_{i}|\leq|u_{i}|$. If $t=r$, the result follows from (R3).

This completes the proof that there is a unique chain having $(u_i)_{i=1}^{m}$ as sequence of forbidden subwords. We left to the reader to check that this chain is rigid.
\end{proof}

We now prove the converse.
\begin{Lem}\label{Lem:rigidsequence2}
Let $(G,S)$ be a triangle-free, even Coxeter system.  
If a chain $(w,(u_i)_{i=1}^{m},(z_i)_{i=1}^{m-1})$ of rank $m$ is a rigid then $(u_i)_{i=1}^m$ satisfies {\rm (R1), (R2)} and {\rm (R3)}. 
\end{Lem}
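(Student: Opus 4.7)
The plan is to reduce each of the three conditions to an application of Lemma~\ref{lem:intersect} together with the chain inequalities. For every $i\in\{1,\dots,m-1\}$, a direct consequence of the definition of a rigid chain is that the two-word amalgamation $u_i*_{z_i}u_{i+1}$ contains no forbidden subword other than $u_i$ and $u_{i+1}$. Combined with the chain conditions $z_1\neq u_1$, $|z_{i-1}|+|z_i|\leq|u_i|$ (for $i\geq 2$) and non-emptiness of each $z_j$, the intersection $z_i$ is also a proper subword of both $u_i$ and $u_{i+1}$; this places us exactly in the hypothesis of Lemma~\ref{lem:intersect}.

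With that set-up, (R1) is the \emph{only if} direction of Lemma~\ref{lem:intersect}(ii) applied to the pair $(u_i,u_{i+1})$ when both start and end with the same letter $s$: the only way $u_i*_{z_i}u_{i+1}$ has no extra forbidden subword is that $u_i\equiv(s,s)$ or $u_{i+1}\equiv(s,s)$. In parallel, (R2) is the \emph{only if} direction of Lemma~\ref{lem:intersect}(i) when the endpoints are different letters $s\neq t$: it forces $z_i\equiv(t,a_{s,t})\equiv(a_{t,s},s)$, which simultaneously says that $u_i$ ends with $(a_{t,s},s)$ and $u_{i+1}$ begins with $(t,a_{s,t})$.

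For (R3), with $u_{i-1}\equiv(t,\dots,t)$, $u_i\equiv(s,\dots,s)$, $u_{i+1}\equiv(t,\dots,t)$ and $s\neq t$, I would apply (R2) to both consecutive pairs $(u_{i-1},u_i)$ and $(u_i,u_{i+1})$: this forces $u_i$ to begin with $(s,a_{t,s})$ and to end with $(a_{t,s},s)$, while the overlaps satisfy $|z_{i-1}|=|z_i|=|a_{t,s}|+1$. The chain inequality $|z_{i-1}|+|z_i|\leq|u_i|$ then gives $|u_i|\geq 2+2|a_{t,s}|$. The main obstacle of the argument lies in excluding equality: if $|u_i|=2+2|a_{t,s}|$, then the prefix $(s,a_{t,s})$ and the suffix $(a_{t,s},s)$ of $u_i$ are disjoint and cover $u_i$ entirely, so $u_i\equiv(s,a_{t,s},a_{t,s},s)$. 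Since $u_i$ has the form $(s,\omega_i,s)$ with $\omega_i$ a geodesic word over the alphabet $A(s)$ (by the description of $U$ in Lemma~\ref{lem:langforbwords}), $(a_{t,s},a_{t,s})$ would be a geodesic word in the Coxeter group $C(s)$; but $a_{t,s}$ is an involution in $C(s)$ by Theorem~\ref{thm:centralizers}(i), so $(a_{t,s},a_{t,s})$ represents the identity and cannot be geodesic. This contradiction yields the strict inequality $|u_i|>2+2|a_{t,s}|$ demanded by (R3).
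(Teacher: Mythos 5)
Your derivations of (R1) and (R2) from Lemma~\ref{lem:intersect}, and your treatment of (R3) --- using $|z_{i-1}|+|z_i|\leq|u_i|$ together with the observation that $u_i\equiv(s,a_{t,s},a_{t,s},s)$ is impossible because $(a_{t,s},a_{t,s})$ is not geodesic over $A(s)$ --- coincide with what the paper does. The problem is your opening claim, on which everything else rests: that it is ``a direct consequence of the definition of a rigid chain'' that each amalgamation $u_i*_{z_i}u_{i+1}$ contains no forbidden subword other than $u_i$ and $u_{i+1}$. This is not what Definition~\ref{def:chain} gives you. For $i\geq 2$, rigidity only controls forbidden subwords of the \emph{suffix of $w$ following $u_{i-1}$}, and that suffix begins strictly inside $u_i$ (at the first letter after the overlap $z_{i-1}$). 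The amalgamation $u_i*_{z_i}u_{i+1}$ begins at the first letter of $u_i$, i.e.\ inside $z_{i-1}$. A hypothetical extra forbidden subword $u'$ of $u_i*_{z_i}u_{i+1}$ must straddle $u_i$ and $u_{i+1}$ by (U2); if it starts within $z_{i-1}$ and ends beyond $u_i$, it lies neither in the controlled suffix nor in the word of the rank-$(i)$ prefix subchain, so neither clause of the rigidity definition rules it out. Only the pair $(u_1,u_2)$ is genuinely direct, since rank-$2$ rigidity speaks about the whole word $u_1*_{z_1}u_2$.

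This is exactly the point the paper's inductive proof is built to handle: assuming $(u_i)_{i=1}^{m-1}$ already satisfies (R1)--(R3), Lemma~\ref{lem:intersect} pins down $z_{m-2}$ as $(s)$ or $(s,a_{t,s})$, which guarantees that $u''_{m-1}\coloneqq(s,u'_{m-1})$ (with $u'_{m-1}$ the suffix of $u_{m-1}$ past the overlap with $u_{m-2}$) is itself a forbidden word; the amalgamation $u''_{m-1}*_{z_{m-1}}u_m$ is then, up to its first letter, the suffix of $w$ following $u_{m-2}$, so its cleanliness \emph{does} follow from rigidity, and Lemma~\ref{lem:intersect} applied to the pair $(u''_{m-1},u_m)$ transfers (R1)/(R2) back to $(u_{m-1},u_m)$. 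You need to either reproduce this bridging step (or some substitute for it) or argue separately that no forbidden word can start inside $z_{i-1}$ and end beyond $u_i$; as written, the first sentence of your proof asserts without justification the one thing that actually requires work.
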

\begin{proof}

We proceed by induction. By definition chains of rank $1$ are rigid chains. By Lemma \ref{lem:intersect}, chains of rank 2 are rigid if and only they satisfy (R1) and (R2) and $w\equiv u_1*_{z_1}u_2$. 

Suppose that by induction we have proven the result for chains of rank $m-1$.  If $(w,(u_i)_{i=1}^{m},(z_i)_{i=1}^{m-1})$ is rigid then 
\begin{enumerate}
\item the prefix of $w$ containing $(u_i)_{i=1}^{m_1}$ is rigid, which, by induction, is equivalent to saying that $(u_i)_{i=1}^{m-1}$ satisfies (R1),(R2) and (R3);
\item the suffix $v_1$ of $w$ following $u_{m-2}$ contains no forbidden subword except $u_m$. 

Assume that $u_{m-1}\equiv(s,\dots, s).$ 

By the induction hypothesis,  there is no forbidden subword in the amalgamation of $u_{m-2}*_{z_{m-2}}u_{m-1}$ but $u_{m-2}$ and $u_{m-1}$, and then, by Lemma \ref{lem:intersect}, either $z_{m-2}\equiv(s) $ or $z_{m-2}\equiv(s,a_{t,s})$. In both cases $(s,u'_{m-1})\in U$, where  $u'_{m-1}$ is the biggest suffix of $u_{m-1}$ not containing $z_{m-1}$. Let $u''_{m-1}\equiv(s,u'_{m-1})$. Since we are in a chain,   $u''_{m-1}$ and $u_m$ have non-trivial intersection $z_{m-1}$ and the amalgamation $u''_{m-1}*_{z_{m-1}}u_{m}$ contains no forbidden subword except for $u''_{m-1}$ and $u_m$ (since the chain is rigid).

Then by Lemma \ref{lem:intersect}, $u''_{m-1},u_m$ satisfy (R1) and (R2), and in particular $u_{m-1},u_m$ satisfy (R1) and (R2) and $z_{m-1}$ is unique.

By the induction hypothesis $(u_i)_{i=1}^{m-1}$ satisfy (R3). It only remains to show that  if $u_{m-2}\equiv(t,\dots,t)$ and $u_{m}\equiv(t,\dots,t)$, $t\neq s$, then $|u_{m-1}|>2+2|a_{t,s}|$.  We have shown that $z_{m-1}$ and $z_{m-2}$ are determined by $(u_i)_{i=1}^m$ and in this case $z_{m-2}\equiv(a_{s,t},t)\equiv(s,a_{t,s})\equiv z_{m-1}$. Since $(w,(u_i)_{i=1}^{m},(z_i)_{i=1}^{m-1})$ is a chain $|z_{m-2}|+|z_{m-1}|\leq |u_{m-1}|$. As a forbidden word, $u_{m-1}$ can not be equal to  $(s,a_{t,s},a_{t,s},s)$ and hence  $|z_{m-2}|+|z_{m-1}|< |u_{m-1}|$. This implies that $(u_i)_{i=1}^{m}$ satisfies (R3).
\end{enumerate}
\end{proof}

%\begin{Que}
%Is the converse of this Corollary \ref{cor:criterium} also true. That is, if  $\mathcal{L}_1,\mathcal{L}_2\subseteq S^*$ are given by forbidden sets $U_1$ and $U_2$ such that $F_{\mathcal{L}_1}(z)=F_{\mathcal{L}_2}(z)$, is it true that  $|Q_n^{(m)}|=|P_{n}^{(m)}|$ for all $n$ and $m$. 
%\end{Que}

A labelled graph $\Gamma$ is {\it star-regular} if for every pair of vertices $u,v\in V\Gamma$, there is a isomorphism  of labelled graphs between $\Star(v)$ and $\Star(v).$
The main result of this section is the following:

\begin{Thm} \label{ThmevenCoxeter}
Let $(G,S)$ be an even Coxeter system with graph $\Gamma.$
Suppose that $\Gamma$ is triangle-free,  star-regular.
Then the geodesic growth of $G$ only depends on $|S|$ and the isomorphism class of the star of the vertices.

In particular, if $(G_1,S_1)$ and $(G_2,S_2)$ are triangle-free, star-regular, even Coxeter systems with $|S_1|=|S_2|$ and $\Star(v)\cong \Star(u)$, $v\in V\Gamma_1$, $u\in V\Gamma_2$, then $G_1$ and $G_2$ have the same geodesic growth. 
\end{Thm}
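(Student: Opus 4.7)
The plan is to invoke Theorem \ref{thm:criterium}, which reduces the calculation of the geodesic growth function to a count of the numbers $|Q_n^{(m)}|$ of rigid chains of rank $m$ and length $n$. By Lemmas \ref{Lem:rigidsequence1} and \ref{Lem:rigidsequence2}, together with the uniqueness of the overlaps $z_i$ established in their proofs, rigid chains are in bijection with sequences $(u_i)_{i=1}^{m}$ in $U$ satisfying the conditions (R1), (R2), (R3) of Definition \ref{Def:seq}; the length of the associated amalgamation is determined combinatorially by the sequence. So it suffices to show that the number of such sequences of prescribed rank and prescribed amalgamated length depends only on $|S|$ and the isomorphism class of $\Star(v)$.

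Since $\Gamma$ is triangle-free, each $\Star(v)$ is a star (centre plus leaves, with no edges among leaves). By star-regularity, there is a labelled star $\Sigma$ with distinguished centre $c$ such that for every $v \in V\Gamma$ there is an isomorphism of labelled graphs $\varphi_v : \Sigma \to \Star(v)$ with $\varphi_v(c)=v$. In particular, by Theorem \ref{thm:centralizers}(iii), the centralizer $C(v)$ as a Coxeter system on $\{v\}\cup A(v)$ is determined up to isomorphism by $\Sigma$ alone; this identifies, via $\varphi_v$, the set $U(v)$ of forbidden words at $v$ (together with lengths, starting/ending letters and the tagging of internal $a_{t,v}$'s by leaves of $\Sigma$) with an abstract set $\tilde U$ of ``forbidden words at $c$'' depending only on $\Sigma$.

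I would then count the sequences by a transfer-matrix argument. Form an auxiliary directed graph whose vertices are pairs (abstract forbidden word $\tilde u \in \tilde U$, leaf of $\Sigma$ encoding the starting letter of the previous word needed to check (R3)), and whose edges encode the transitions allowed by (R1), (R2), (R3): given $u_i$, the first letter $s_{i+1}$ of $u_{i+1}$ is forced either to equal $s_i$ (case (R1)) or to be the neighbour of $s_i$ corresponding, via $\varphi_{s_i}$, to the leaf of $\Sigma$ indexing the last letter of $\tilde u_i$ (case (R2)); in either case the admissible $u_{i+1}$ are parametrised abstractly by $\tilde U$ with the overlap pinned down by Lemma \ref{lem:intersect}, and (R3) is a purely abstract constraint on the triple $(\tilde u_{i-1},\tilde u_i,\tilde u_{i+1})$. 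Walks in this transfer graph of length $m-1$ therefore count (R1)-(R2)-(R3) sequences up to the free initial choice $s_1\in V\Gamma$, so $|Q_n^{(m)}|$ equals $|S|$ times a quantity depending only on $\Sigma$. Applying Theorem \ref{thm:criterium} concludes the proof, and the ``in particular'' statement follows by applying the first part to both $(G_1,S_1)$ and $(G_2,S_2)$.

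The main obstacle is the bookkeeping for how the overlap $z_i$ between $u_i$ and $u_{i+1}$ is matched on both sides: the tail of $u_i$ is described by abstract data in $\Sigma$ through $\varphi_{s_i}$, while the head of $u_{i+1}$ is described through $\varphi_{s_{i+1}}$, and these have to glue consistently. They do because both $\varphi$'s preserve the edge labels $m_{s_i,s_{i+1}}$ that determine the length and shape of $a_{s_{i+1},s_i}$ and $a_{s_i,s_{i+1}}$. This is precisely the point that forces star-regularity to be taken in the sense of labelled graph isomorphism, and where the triangle-free hypothesis is needed so that no further edges between leaves of the stars have to be tracked across successive transitions.
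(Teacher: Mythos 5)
Your proposal is correct and follows essentially the same route as the paper: reduce via Theorem \ref{thm:criterium} and Lemmas \ref{Lem:rigidsequence1}--\ref{Lem:rigidsequence2} to counting sequences satisfying (R1)--(R3), then use triangle-freeness (via Theorem \ref{thm:centralizers}) and star-regularity to show the count of such sequences of given rank and amalgamated length is local data times $|S|$. The paper phrases this last step as counting labelled paths in $\Gamma$ decorated with $0/1$-loops, whereas you phrase it as a transfer-matrix walk on abstract states built from the model star $\Sigma$; these are the same argument in different clothing, with yours making the factorization $|Q_n^{(m)}|=|S|\cdot(\text{function of }\Sigma)$ slightly more explicit.
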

\begin{proof}
Let $U=U(G,S)$ be the set of forbidden words, and $|Q^{(m)}_n|$ denote the number of rigid chains of length $n$ and rank $m$. According to Theorem \ref{thm:criterium}, we have to show that the $|Q^{(m)}_n|$ only depend on $|S|$ and $\Star(v)$. By Lemmas \ref{Lem:rigidsequence1} and \ref{Lem:rigidsequence2}, the rigid chains are codified by sequences of words in $U$ satisfying (R1), (R2) and (R3).

We modify $\ga$ by adding two loops to each vertex $s$ of $V\ga$ and  labelling them by $1_s$ and $0_s$. Denote this graph by $\widehat{\ga}.$ Then $\widehat{\ga}$ is still star-regular. The key idea of the proof is that the number of labelled paths $\gamma$ in $\widehat{\ga}$  of a given length that do not have two consequetive edges labelled by $1$ depends only on $|S|$ and $\Star(v)$.

%We will show that the rigid chains of rank $m+1$ correspond to a path of length $m$ in $\widehat{\ga}$. 

Suppose that $\gamma=(s_1,e_1,s_2,\dots,e_{m-1}, s_{m}),$ is a path where, for $i=1,\dots, m,$ $s_i$ is a vertex of $\widehat{\ga}$, $e_1,\dots,e_{m-1}$  are edges of  $\widehat{\ga}$ satisfying that for $i=1,\dots, m-2$, $e_i, e_{i+1}$ are not both labelled by 1.

We construct a sequence of forbidden words $(u_i)_{i=1}^{m}$, $u_i\equiv(s_i,p_i,\omega_i,q_i,s_i)$, associated to $\gamma$ with the following conditions on $p_i, \omega_i$ and $q_i$ to assure that $(u_i)_{i=1}^m$ satisfy (R1), (R2) and (R3). We will assume that $e_0$ and $e_{m}$ are labelled by $0$.
\begin{enumerate}
\item[(a)] If $e_i$ is labelled by $0$, then $p_i,\omega_i,q_i$ are  empty words, i.e $u_i=(s_i,s_i)$.
\item[(b)] If $e_i$ is not labelled by $0$:
	\begin{itemize}
	\item if $e_{i-1}$ is labelled by $0$, $p_i$ is the empty word; if $e_{i-1}$ is not labelled by $0$, then, since there can not be two consecutive edges labelled by $1$, we have that $s_{i-1}\neq s_i$ and we put $p_i\equiv a_{s_{i-1},s_i}.$
	\item if  $e_{i+1}$ is labelled by $0$,  $q_i$ is the empty word; if $e_{i+1}$ is not labelled by $0$, then, since there can not be two consecutive edges labelled by $1$, we have that $s_{i+1}\neq s_i$ and we put $q_i \equiv a_{s_{i+1},s_i}.$
	\end{itemize}
\item[(c)] If $e_i$ is not labelled by $0$, $\omega_i$ is a geodesic word in $A(s_i)$, such that $(p_i,\omega_i,q_i)$ is geodesic and if $s_{i-1}=s_{i+1}\neq s_i$, then $\omega_i$ is non-empty.
\end{enumerate}
Notice that (a) and (c) together with the fact that there are no two consecutive edges labelled 1, ensures that  $(u_i)_{i=1}^m$ satisfy (R1). Condition (b) and (c) ensures that  $(u_i)_{i=1}^m$ satisfy (R2). Condition (c) ensures (R3).

We notice that if we have a sequence of forbidden words $(u_i)_{i=1}^m$ satisfying (R1), (R2) and (R3), then it determines a path $\gamma=(s_1,e_1,\dots, e_{m-1},s_{m})$ in $\widehat{\ga}$ where $s_i$ is the first letter of $u_i$ and $e_i$ is and edge between $s_i$ and $s_{i+1}$ and if $s_i=s_{i+1}$, $e_i$ is labelled by $0$ if and only if  $u_{i-1}\equiv (s_i,s_i)$.

To show that the number of rigid chains of rank $m$ and length $n$ only depends on $|S|$ and $\Star(v)$, observe that for a rigid chains of rank $m$ and length $n$ are sequences  $(u_i)_{i=1}^m$ satisfying (R1), (R2) and (R3), and the rank and the length of the chain only depends on the $(u_i)_{i=1}^m$. In turn, such a sequences, depend on the paths described above and the choices for $\omega_i$ ($p_i$ and $q_i$ are fixed by the  labels of the edges in the paths). Since the graph is triangle-free, by Theorem \ref{thm:centralizers} (ii) the lengths of the $\omega_i$ as words over $S$ only depend on the length of $\omega_i$ as a geodesic word over $A(s)$ and the length of the $a_{t,s}$ $s,t\in S$. Since the graph is star-regular, the isomorphism from $\Star(t)$ to $\Star(s)$ induces a  bijection between elements of $A(s)$ and $A(t)$ that preserves the length as word over $S$ and that extends to a bijection from geodesic words over $A(s)$ to geodesic words over $A(t)$. Now the theorem follows because,  the number of labelled paths of a given length not having two consecutive ones only depend on $|S|$ and $\Star(v)$, and the possible lengths of $\omega_i$ only depend on the labels on the edges of the path.

\end{proof}

\begin{Ex}
Let $p,q$ be positive integers. Let $\ga_1$ be the graph consisting of 2 squares, with edges labelled alternatively $2p$ and $2q$. Let $\ga_2$ be the graph consisting of an octagon with edges labelled alternatively $2p$ and $2q$. Then the Coxeter groups defined by $\ga_1$ and $\ga_2$ are non-isomorphic (see, \cite[Theorem 5.1]{Bahls}) and have the same geodesic growth.
\begin{figure}[h]
\begin{center}
\setlength{\unitlength}{3947sp}%
\begingroup\makeatletter\ifx\SetFigFont\undefined%
\gdef\SetFigFont#1#2#3#4#5{%
  \reset@font\fontsize{#1}{#2pt}%
  \fontfamily{#3}\fontseries{#4}\fontshape{#5}%
  \selectfont}%
\fi\endgroup%
\begin{picture}(4041,2375)(841,-3983)
\put(863,-2215){\makebox(0,0)[lb]{\smash{{\SetFigFont{6}{7.2}{\rmdefault}{\mddefault}{\updefault}{\color[rgb]{0,0,0}$2q$}%
}}}}
{\color[rgb]{0,0,0}\thinlines
\put(3545,-3753){\circle*{74}}
}%
{\color[rgb]{0,0,0}\put(4816,-3234){\circle*{74}}
}%
{\color[rgb]{0,0,0}\put(4816,-2491){\circle*{74}}
}%
{\color[rgb]{0,0,0}\put(3033,-3234){\circle*{74}}
}%
{\color[rgb]{0,0,0}\put(3568,-1971){\circle*{74}}
}%
{\color[rgb]{0,0,0}\put(4296,-1971){\circle*{74}}
}%
{\color[rgb]{0,0,0}\put(3040,-2491){\circle*{74}}
}%
{\color[rgb]{0,0,0}\put(3033,-2489){\line( 0,-1){739}}
\put(3031,-3228){\line( 1,-1){523}}
\put(3553,-3752){\line( 1, 0){738}}
\put(4291,-3754){\line( 1, 1){523.500}}
\put(4816,-3232){\line( 0, 1){739}}
\put(4817,-2493){\line(-1, 1){522.500}}
\put(4296,-1969){\line(-1, 0){739}}
\put(3557,-1968){\line(-1,-1){522.500}}
}%
\put(3776,-3909){\makebox(0,0)[lb]{\smash{{\SetFigFont{6}{7.2}{\rmdefault}{\mddefault}{\updefault}{\color[rgb]{0,0,0}$2p$}%
}}}}
\put(4540,-3620){\makebox(0,0)[lb]{\smash{{\SetFigFont{6}{7.2}{\rmdefault}{\mddefault}{\updefault}{\color[rgb]{0,0,0}$2q$}%
}}}}
\put(3813,-1881){\makebox(0,0)[lb]{\smash{{\SetFigFont{6}{7.2}{\rmdefault}{\mddefault}{\updefault}{\color[rgb]{0,0,0}$2p$}%
}}}}
\put(4867,-2936){\makebox(0,0)[lb]{\smash{{\SetFigFont{6}{7.2}{\rmdefault}{\mddefault}{\updefault}{\color[rgb]{0,0,0}$2p$}%
}}}}
\put(2839,-2929){\makebox(0,0)[lb]{\smash{{\SetFigFont{6}{7.2}{\rmdefault}{\mddefault}{\updefault}{\color[rgb]{0,0,0}$2p$}%
}}}}
\put(3122,-3626){\makebox(0,0)[lb]{\smash{{\SetFigFont{6}{7.2}{\rmdefault}{\mddefault}{\updefault}{\color[rgb]{0,0,0}$2q$}%
}}}}
\put(3107,-2201){\makebox(0,0)[lb]{\smash{{\SetFigFont{6}{7.2}{\rmdefault}{\mddefault}{\updefault}{\color[rgb]{0,0,0}$2q$}%
}}}}
\put(4548,-2193){\makebox(0,0)[lb]{\smash{{\SetFigFont{6}{7.2}{\rmdefault}{\mddefault}{\updefault}{\color[rgb]{0,0,0}$2q$}%
}}}}
{\color[rgb]{0,0,0}\put(1904,-3827){\circle*{74}}
}%
{\color[rgb]{0,0,0}\put(1027,-3827){\circle*{74}}
}%
{\color[rgb]{0,0,0}\put(1911,-2929){\circle*{74}}
}%
{\color[rgb]{0,0,0}\put(1027,-2936){\circle*{74}}
}%
{\color[rgb]{0,0,0}\put(1919,-2639){\circle*{74}}
}%
{\color[rgb]{0,0,0}\put(1919,-1740){\circle*{74}}
}%
{\color[rgb]{0,0,0}\put(1027,-2616){\circle*{74}}
}%
{\color[rgb]{0,0,0}\put(1035,-1746){\circle*{74}}
}%
{\color[rgb]{0,0,0}\put(1027,-2639){\framebox(892,891){}}
}%
{\color[rgb]{0,0,0}\put(1027,-3827){\framebox(892,891){}}
}%
\put(1376,-1703){\makebox(0,0)[lb]{\smash{{\SetFigFont{6}{7.2}{\rmdefault}{\mddefault}{\updefault}{\color[rgb]{0,0,0}$2p$}%
}}}}
\put(1368,-2595){\makebox(0,0)[lb]{\smash{{\SetFigFont{6}{7.2}{\rmdefault}{\mddefault}{\updefault}{\color[rgb]{0,0,0}$2p$}%
}}}}
\put(1354,-2899){\makebox(0,0)[lb]{\smash{{\SetFigFont{6}{7.2}{\rmdefault}{\mddefault}{\updefault}{\color[rgb]{0,0,0}$2p$}%
}}}}
\put(1383,-3939){\makebox(0,0)[lb]{\smash{{\SetFigFont{6}{7.2}{\rmdefault}{\mddefault}{\updefault}{\color[rgb]{0,0,0}$2p$}%
}}}}
\put(1933,-3463){\makebox(0,0)[lb]{\smash{{\SetFigFont{6}{7.2}{\rmdefault}{\mddefault}{\updefault}{\color[rgb]{0,0,0}$2q$}%
}}}}
\put(856,-3426){\makebox(0,0)[lb]{\smash{{\SetFigFont{6}{7.2}{\rmdefault}{\mddefault}{\updefault}{\color[rgb]{0,0,0}$2q$}%
}}}}
\put(1933,-2238){\makebox(0,0)[lb]{\smash{{\SetFigFont{6}{7.2}{\rmdefault}{\mddefault}{\updefault}{\color[rgb]{0,0,0}$2q$}%
}}}}
{\color[rgb]{0,0,0}\put(4288,-3761){\circle*{74}}
}%
\end{picture}%

\end{center}
\caption{Two even Coxeter groups with the same geodesic growth.}
\end{figure}
\end{Ex}

Theorem \ref{ThmevenCoxeter} gives a generalization of our criterium for ragc's with the same geodesic growth. However, there are still many questions left regarding when two Coxeter groups have the same geodesic growth.

In this section the `eveness' and `triangle-freeness' were used to have particulary nice centralizers of generators, but there are no reasons for this condition to be necessary.  It is natural to ask the following:  Given a Coxeter system $(G,S)$ with $\ga(G,S)$ star-regular, does the geodesic growth of $G$ only depend on $|S|$ and the isomorphism class of $\Star(v)$?

It is worth mentioning that it is relevant in this context to consider the relation with the automatic generating set for racg's described in Section \ref{relation}. The greedy generating set \cite{Scott} generalizes the automatic generating an it is natural to expect that for an even Coxeter system $(G,S)$ with $\ga(G,S)$ star-regular, the spherical growth with respect to the greedy generating set of $G$ only depends on $|S|$ and the isomorphism class of $\Star(v)$.
%%%%%%
%Section%
%%%%%%

\bigskip
\appendix
\section{Appendix: Centralizers  in even Coxeter groups}\label{s:centralizers}

In this section we give the promised proof of Theorem \ref{thm:centralizers}. As we mention, this is basically the proof of  \cite[Theorem 4.10]{BahlsMihalik}. We need two well-known facts, the fist one is a useful corollary of Lemma \ref{lem:abelianized}:
\begin{Cor}\label{cor:length}
Let $(G,S)$ be a Coxeter system, $w\in G$  and $s\in S$. Then $|ws|_S=|w|_S\pm 1$.
\end{Cor}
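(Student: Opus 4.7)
The plan is to combine the obvious two-sided triangle inequality with a parity argument coming from the sign homomorphism of Lemma~\ref{lem:abelianized}.

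First I would observe the easy bounds: since $w$ has a representative word of length $|w|_S$, concatenating $s$ gives a word of length $|w|_S+1$ representing $ws$, so $|ws|_S\leq |w|_S+1$. Conversely, since $s$ is an involution, $w=(ws)s$, and the same argument applied to $ws$ yields $|w|_S\leq |ws|_S+1$. Together these give $|ws|_S\in\{|w|_S-1,\,|w|_S,\,|w|_S+1\}$.

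The remaining work is to exclude the equality $|ws|_S=|w|_S$. Here I would invoke Lemma~\ref{lem:abelianized}: there is a group homomorphism $\varphi\colon G\to \mathbb{Z}/2\mathbb{Z}$ sending every element of $S$ to $1$. For any word $x_1\cdots x_n$ over $S$ representing an element $g\in G$, one has $\varphi(g)\equiv n\pmod 2$; in particular, applying this to a geodesic representative, $\varphi(g)\equiv |g|_S\pmod 2$. Hence $|ws|_S\equiv \varphi(ws)=\varphi(w)+1\equiv |w|_S+1\pmod 2$, so $|ws|_S$ and $|w|_S$ have opposite parities and cannot be equal.

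Combining the two observations yields $|ws|_S=|w|_S\pm 1$, as claimed. There is no real obstacle here; the only subtlety worth flagging is the use of Lemma~\ref{lem:abelianized} to guarantee that the parity map to $\mathbb{Z}/2\mathbb{Z}$ is well defined on $G$ (so that word length is a well-defined invariant modulo $2$), which is exactly what rules out the middle case.
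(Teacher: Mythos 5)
Your proof is correct and is exactly the argument the paper intends: the corollary is stated as a consequence of Lemma~\ref{lem:abelianized}, and the standard derivation is precisely the triangle-inequality bound $\bigl||ws|_S-|w|_S\bigr|\leq 1$ combined with the parity homomorphism $G\to\Z/2\Z$ to rule out equality. Nothing to add.
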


For a proof of the second fact see for example \cite[Proposition 1.1.1]{BjornerBrenti}.
\begin{Lem}\label{lem:prefix}
Let $(G,S)$ be a Coxeter sytem with relation matrix $(m_{s,t}).$
Let $s,t\in S$, $s\neq t.$ The order of $st$ in $G$ is $m_{s,t}$.
In particular, for all $t,s\in S$, the prefix of $(st)^{m_{s,t}}$ of length $m_{s,t}/2-1$ is geodesic in $S$.
\end{Lem}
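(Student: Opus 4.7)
The proof will treat the two assertions in sequence. The first---that $st$ has order exactly $m_{s,t}$ in $G$---is the classical fact that every rank-two parabolic subgroup of a Coxeter group is dihedral of the expected order. My plan is to invoke the Tits geometric representation. Equip $\R^S$ with the symmetric bilinear form $B$ defined by $B(e_s,e_s)=1$ and $B(e_s,e_t) = -\cos(\pi/m_{s,t})$ for $s\neq t$ (interpreting this as $-1$ when $m_{s,t}=0$). For each $s\in S$ set $\rho_s(x) = x - 2B(x,e_s)e_s$, a $B$-reflection in the hyperplane $e_s^{\perp}$. The classical theorem (as proved, e.g., in \cite[Ch.~4]{BjornerBrenti}) is that $s\mapsto \rho_s$ extends to a faithful homomorphism $\rho\colon G \to GL(\R^S)$. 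Restricted to the $2$-plane $\mathrm{span}(e_s,e_t)$, on which $B$ is positive semi-definite of signature appropriate to the angle $\pi/m_{s,t}$, the composition $\rho_s\rho_t$ acts as a rotation through $2\pi/m_{s,t}$, hence has order exactly $m_{s,t}$. Faithfulness of $\rho$ then transports this back to $G$, yielding the first claim.

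For the in-particular statement, my plan is to argue by induction together with Corollary \ref{cor:length} and the Deletion Condition. Let $w_k \equiv (s,t,s,t,\dots)$ be the alternating prefix of $(st)^{m_{s,t}}$ of length $k$. I want to show $|w_k|_S = k$ for every $k$ in the claimed range. The base case $k=0$ is trivial. For the inductive step, Corollary \ref{cor:length} gives $|w_k|_S = |w_{k-1}|_S \pm 1$; if the sign were $-1$, the Deletion Condition applied to the non-geodesic word $w_k$ would exhibit two suppressible letters and hence identify $w_k$ with a shorter alternating word $w_j$ in $G$, $j<k$. But by the first part $|\langle s,t\rangle_G| = 2m_{s,t}$ and the alternating words of length $<m_{s,t}$ starting with $s$ (respectively $t$) represent distinct elements of the dihedral group $\langle s,t\rangle$ - together with those starting with $t$ they enumerate the whole group. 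Hence no such identification is possible within the stated range, the sign must be $+1$, and $w_k$ is geodesic in $(G,S)$.

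The main obstacle is Step~1: establishing faithfulness of the Tits representation. This rests on the decomposition of the root system into positive and negative roots and is a foundational theorem that I would invoke as a black box rather than reprove. Once it is available, Step~2 is a routine application of the two tools already supplied by the paper, namely Corollary \ref{cor:length} and the Deletion Condition, restricted to the dihedral parabolic subgroup.
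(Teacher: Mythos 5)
The paper does not prove this lemma at all: it is stated as a well-known fact and delegated to the literature (``For a proof of the second fact see for example \cite[Proposition 1.1.1]{BjornerBrenti}''), so any comparison is between your reconstruction and the standard textbook argument rather than an argument in the paper. Your reconstruction is essentially that standard argument and is correct in outline: the Tits form restricted to $\mathrm{span}(e_s,e_t)$ makes $\rho_s\rho_t$ a rotation of order $m_{s,t}$ (for $m_{s,t}$ finite), and the dihedral normal-form count then forces the alternating prefixes to be geodesic. One genuine simplification you are missing: faithfulness of $\rho$ is \emph{not} needed here and is a much deeper theorem than the lemma. Since the order of $\rho(st)$ divides the order of $st$, it suffices that $\rho$ be a well-defined homomorphism (i.e.\ that the $\rho_s$ satisfy the defining relations, which is exactly the rotation computation together with the fact that $\rho_s,\rho_t$ fix the $B$-orthogonal complement of the $2$-plane pointwise) and that $\rho_s\rho_t$ have order $m_{s,t}$; combined with the relation $(st)^{m_{s,t}}=1$ this pins the order down. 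So the ``main obstacle'' you flag is not an obstacle.

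Two small imprecisions in your second step, neither fatal. First, the word produced by the Deletion Condition from the alternating word $w_k$ need not itself be alternating (e.g.\ deleting interior letters of $ststst$ can leave $sstt$), so before invoking distinctness you must reduce the length-$(k-2)$ representative to alternating normal form by cancelling repeated letters; only then do you land on some $w_j$ with $j\le k-2$ and contradict the injectivity of the normal forms. (In fact the Deletion Condition is dispensable: $|w_k|_S=k-2$ already gives a shorter representative to reduce.) Second, the alternating words of length $<m_{s,t}$ starting with $s$ or $t$ give $2m_{s,t}-1$ elements, not the whole group of order $2m_{s,t}$ --- the longest element is missing --- but distinctness, which is what you actually use, still follows from the counting argument once one notes that every element of $\langle s,t\rangle$ is represented by \emph{some} alternating word of length $\le m_{s,t}$. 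Finally, your rotation computation silently assumes $m_{s,t}$ finite; when $m_{s,t}=0$ the restricted transformation is unipotent rather than a rotation, though the ``in particular'' clause is vacuous in that case.
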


The proof of Theorem \ref{thm:centralizers} is mostly based in the following lemma.

\begin{Lem}\label{lem:fact1}   
Let $(G,S)$ be an even Coxeter system and let $s\in S$.
Let $a_1,\dots, a_n\in A(s)$, suppose that $(a_1,\dots,a_n)$ is geodesic as a word on $A(s)$, and let $t\in \Link(s)$. 
\begin{enumerate}
\item[\normalfont{(a)}] If $(a_1,\dots,a_n,t)$ is  geodesic as a word on $S,$ then $(a_1,\dots,a_n,a_{t,s})$ is also geodesic on $S$.
\item[\normalfont{(b)}] If $(a_1,\dots, a_n,t)$ is not a geodesic as a word on $S,$ then there exists $1\leq i\leq n$ such that  $t$ commutes with $a_{i}\cdots a_n$.
\end{enumerate}
\end{Lem}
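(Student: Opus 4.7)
I would prove both parts simultaneously by induction on $n$, leveraging the Exchange Condition together with Lemma~\ref{lem:abelianized}, which sharpens the Deletion Condition for even Coxeter systems by forcing the two deleted letters to be equal. For the base case $n=0$, part (a) reduces to showing that $a_{t,s}$ is geodesic in $S$, which follows from Lemma~\ref{lem:prefix} applied inside the dihedral subgroup $\langle s,t\rangle$; part (b) is vacuous. For the inductive step, assume the lemma holds for sequences of length strictly less than $n$; iterating part (a) through the alternating letters of $a_n=a_{t_n,s}$ then ensures that $(a_1,\ldots,a_n)$ is itself geodesic in $S$, because any failure would, via the inductive form of (b), contradict $(a_1,\ldots,a_n)$ being geodesic in $A(s)$.

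For \emph{part (b)}, suppose $(a_1,\ldots,a_n,t)$ is not geodesic in $S$. Since $(a_1,\ldots,a_n)$ is geodesic, Corollary~\ref{cor:length} gives $|a_1\cdots a_n t|_S=|a_1\cdots a_n|_S-1$. Applying the Exchange Condition to the geodesic expression $(a_1,\ldots,a_n)$ yields a position whose removal gives a word representing $a_1\cdots a_n t$, and by Lemma~\ref{lem:abelianized} the deleted letter must equal $t$. Since each $a_j=a_{t_j,s}=(t_j,s,t_j,\ldots,t_j)$ contains the letter $t$ only when $t_j=t$, the deleted letter lies inside some $a_i$ with $t_i=t$. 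Tracking the consequence of this single deletion through the group relations—the two $s$-neighbours of the removed $t$ become adjacent and cancel, then the newly adjacent $t$'s cancel, and so on—shows that moving the trailing $t$ leftwards past $a_i\cdots a_n$ leaves that suffix invariant, which is precisely the claimed commutation.

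For \emph{part (a)}, starting from the geodesic word $(a_1,\ldots,a_n,t)$ I would append the letters of $a_{t,s}=(t,s,t,s,\ldots,t)$ one at a time and argue by a secondary induction that each extension remains geodesic in $S$. A hypothetical failure when appending $\alpha\in\{s,t\}$ would, by the Exchange Condition combined with Lemma~\ref{lem:abelianized}, produce a letter equal to $\alpha$ in the current word whose deletion represents the same element; a case split on whether this letter sits among the already-appended letters of the $a_{t,s}$-prefix or inside some $a_j$ leads, in the first case, to further cancellations contradicting the geodesicity of $(a_1,\ldots,a_n,t)$, and in the second case, via the inductive form of (b) applied to a shorter sequence, to a commutation that contradicts $(a_1,\ldots,a_n)$ being geodesic in $A(s)$. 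The \emph{main obstacle} is exactly this combinatorial case analysis: tracing how a single deletion propagates through the alternating $a_{t,s}$-pattern interlocked with the $A(s)$-block structure of $(a_1,\ldots,a_n)$, where the dihedral behaviour of $\langle s,t\rangle$ and the fact that every element of $A(s)$ commutes with $s$ supply the essential rigidity.
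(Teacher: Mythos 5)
Your overall strategy (Deletion/Exchange Condition sharpened by Lemma \ref{lem:abelianized}, Corollary \ref{cor:length} for parity, base case from Lemma \ref{lem:prefix}) is the same toolkit the paper uses, but the proposal stops short of the two steps that actually carry the proof, and in part (b) the mechanism you describe is not correct. After the Exchange Condition and the abelianization argument locate the deleted letter as an occurrence of $t$ inside some $a_i$ with $t_i=t$, the whole content of (b) is to show that this occurrence is the \emph{last} letter of $a_i$: only then does the exchange identity read $t\,a_{i+1}\cdots a_n=a_{i+1}\cdots a_n\,t$. Your ``cascading cancellation'' picture is backwards: the word obtained by the deletion has length $|\pi(a_1\cdots a_nt)|$ and is therefore already geodesic, so no further cancellation occurs. (The observation that deleting an \emph{interior} $t$ of $a_i\equiv(t,s,t,\dots,t)$ would create an $s\,s$ subword can be turned into an argument ruling out interior occurrences, but that is not how you use it.) More importantly, the first letter of $a_i$ is not ruled out by any such local cancellation --- its left neighbour is the last letter of $a_{i-1}$, not an $s$ --- and if the deleted letter were the first $t$ of $a_i$ the exchange identity would give a commutation of $t$ with $(s,t,\dots,t)\,a_{i+1}\cdots a_n$, which is not the claimed conclusion. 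The paper eliminates this case by a separate parity argument: it prepends $s$ (which commutes with every $a_j$), notes $(s,a_i,\dots,a_n)$ is geodesic, and shows that cancelling against the first $t$ of $a_i$ would force $|\pi((s,a_i,\dots,a_n,t))|\leq|\pi((s,a_i,\dots,a_n))|-3$, contradicting Corollary \ref{cor:length}. Nothing in your sketch plays this role.

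For part (a), you correctly identify the letter-by-letter extension and the two-way case split as ``the main obstacle,'' but that obstacle \emph{is} the proof, and neither branch of your split closes as described. In the branch where the offending letter lies inside some $a_j$, you invoke ``the inductive form of (b) applied to a shorter sequence,'' but (b) concerns appending a single generator of $\Link(s)$ to a word of the form $(a_1,\dots,a_m)$, whereas here you are appending a letter to $(a_1,\dots,a_n,t,s,t,\dots)$, which is not of that form; and the contradiction you would reach is with the hypothesis that $(a_1,\dots,a_n,t)$ is geodesic on $S$, not with geodesicity on $A(s)$. The paper instead runs a minimal-counterexample argument: it takes the shortest suffix $b$ of $a_1$ making $(b,a_2,\dots,a_n,a_{t,s},s)$ non-geodesic, the longest prefix $c$ of $(a_{t,s},s)$ keeping $(b,a_2,\dots,a_n,c)$ geodesic, and applies Corollary \ref{cor:forbidden} to force the first letter of $b$ to equal $s$, whence $(a_1,s)$ would be non-geodesic --- a contradiction. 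Your sketch contains no substitute for this pincer, so as written the proof of (a) is also incomplete.
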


\begin{proof}
(a) First note that, by Lemma \ref{lem:prefix}, the statement is true for $n=0$. That is, for every $t\in \Link(s)$,  $a_{t,s}$ is  geodesic as a word on $S$. 

Suppose that $n$ is the length of the minimal counterexample of (a), we will derive a contradiction. That is, $(a_1,\dots,a_n,t)$ is  geodesic as a word on $S,$ but $(a_1,\dots,a_n,a_{t,s})$ is not. Then, by minimality, the word $\word \equiv(a_2,\dots,a_n,a_{t,s})$ is geodesic. Moreover, if $(w,s)\equiv (a_2,\dots,a_{t,s},s)$ is not geodesic, by the Deletion Condition, the last $s$ must cancel with some $s$ in $(a_2, \dots, a_n)$, but in this case $|\pi(\word s)|<|\pi(\word) |-1$, which contradicts Corollary \ref{cor:length}.

Let $b$ be the shortest suffix of $a_1$ such that $\word_1\equiv(b,a_2,\dots,a_n,a_{t,s},s)$ is not geodesic as word on $S$. Let $c$ be the largest prefix of $(a_{t,s},s)$ such that $(b,a_2,\dots,a_n,c)$ is geodesic. Since $(a_1,\dots, a_n,t)$ is geodesic, $c$ is non-empty. Also $c\not\equiv (a_{t,s},s).$ Let $d$ be the the prefix of $a_{t,s}$ of length $|c|+1$. Then by Corollary \ref{cor:forbidden}, $(b,a_2,\dots,a_n,d)$ is not geodesic but we obtain a geodesic word by deleting the first letter of $b$ and the last letter of $d$, which moreover are the same. Since $(b,a_2,\dots,a_n,d)$ is not geodesic, but  $\word$ is, $d\equiv(a_{t,s},s)$, and the first letter of $b$ is $s$. But $s$ commutes with all the $a_i$, and hence, $(b,s)$ is not geodesic and $(a_1,s)$ is not geodesic, which lead to a contradiction.

(b)  Since $(a_1,\dots, a_{n},t)$ is not geodesic as a word on $S$, by the Deletion Condition, there is $a_i$, $i\leq n$ and a letter in $a_i\equiv (t_i,s,\dots,s,t_i)$ which cancels with $t$. By Lemma \ref{lem:abelianized}, $t_i=t$, and thus $a_i\equiv a_{t,s}$. By Corollary \ref{cor:length}, $|\pi((a_i,\dots, a_n,t))|=|\pi((a_i,\dots, a_n))|-1,$ and hence $t$ must cancel either with the first or the last letter of $a_i$.  We will show that it cancels with the last. If $|a_i|=1$ the result is true. Assume that $|a_i|\geq 3.$ Since $(s,a_i,\dots,a_n)$ is also geodesic, if $t$ cancels with the first $t$ of $a_i$, then $|\pi((s,a_i,\dots,a_n,t))|\leq |\pi((s,a_i,\dots,a_n))|-3$, a contradiction. Thus $t$ cancels with the last letter of $a_i$, that is $t a_{i+1}\cdots a_n=a_{i+1}\cdots a_n t$.
\end{proof}

\begin{Lem}\label{lem:fact3}
Let $(G,S)$ be an even Coxeter system, $s\in S$ and  $a_1,\dots, ,a_{n+1}\in A(s)$. Suppose that $(a_1,\dots,a_n)$ is geodesic as a word in $S$ but  $(a_1,\dots,a_{n+1})$ is not. Then there exists $i\leq n$ such that $a_{n+1}=a_i$ and $a_{n+1}$ commutes with $a_{i+1}\cdots a_n$. 
\end{Lem}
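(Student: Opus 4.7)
The plan is to reduce the statement to Lemma \ref{lem:fact1}(b) by replacing the appended factor $a_{n+1}$ with its first letter. Write $a_{n+1} \equiv a_{t,s}$ for some $t \in \Link(s)$, so that the first letter of $a_{n+1}$ is $t$.

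First I would observe that $(a_1,\dots,a_n,t)$ cannot be geodesic on $S$. Indeed, if it were, then Lemma \ref{lem:fact1}(a), applied to the geodesic word $(a_1,\dots,a_n)$ in $A(s)$, would force $(a_1,\dots,a_n,a_{t,s}) \equiv (a_1,\dots,a_{n+1})$ to be geodesic on $S$, contradicting the hypothesis. Hence $(a_1,\dots,a_n,t)$ is not geodesic, and Lemma \ref{lem:fact1}(b) produces an index $i\leq n$ such that $t$ commutes with $a_i\cdots a_n$. Inspecting the proof of Lemma \ref{lem:fact1}(b) (which in fact establishes more than the stated conclusion: via Lemma \ref{lem:abelianized} the cancelling letter of $a_i$ is forced to be $t$, and via Corollary \ref{cor:length} it must be the \emph{last} letter of $a_i$), one reads off the refinements $a_i\equiv a_{t,s}$ and
$$t\cdot a_{i+1}\cdots a_n \;=\; a_{i+1}\cdots a_n\cdot t.$$
This already yields the first conclusion of the lemma, namely $a_{n+1}=a_i$.

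It remains to upgrade the commutation from $t$ to the whole of $a_{t,s}$. For this I would invoke Theorem \ref{thm:centralizers}(i): since each $a_j\in A(s)\subseteq C(s)$, the product $a_{i+1}\cdots a_n$ commutes with $s$. Together with the commutation with $t$ just obtained, $a_{i+1}\cdots a_n$ centralises the entire dihedral subgroup $\langle s,t\rangle$, and therefore commutes with every group element expressible as a word in $\{s,t\}$. Since $a_{n+1}\equiv a_{t,s}=(t,s,t,\dots,t)$ is such a word, it follows that $a_{n+1}$ commutes with $a_{i+1}\cdots a_n$, completing the proof.

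The main obstacle is the slight mismatch between what Lemma \ref{lem:fact1}(b) formally states (commutation of $t$ with $a_i\cdots a_n$) and the stronger internal information we need from its proof (the identification $a_i\equiv a_{t,s}$, together with the fact that the cancellation occurs at the end of $a_i$ rather than in its interior). Once that refined reading is in hand, passing from commutation with the single generator $t$ to commutation with the full alternating word $a_{t,s}$ is routine thanks to the centraliser property $A(s)\subseteq C(s)$.
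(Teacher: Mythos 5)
Your proposal is correct and follows essentially the same route as the paper: apply the contrapositive of Lemma \ref{lem:fact1}(a) to see that $(a_1,\dots,a_n,t)$ is not geodesic, invoke Lemma \ref{lem:fact1}(b) (reading off from its proof that $a_i\equiv a_{t,s}$ and that $t$ commutes with $a_{i+1}\cdots a_n$), and then combine with the commutation $s\,a_{i+1}\cdots a_n=a_{i+1}\cdots a_n\,s$ to conclude. Your explicit remark that the statement of Lemma \ref{lem:fact1}(b) must be supplemented by information internal to its proof is accurate and matches what the paper does implicitly.
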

\begin{proof}
Let $t$ be the first letter in $a_{n+1}$. By Lemma \ref{lem:fact1}(a) $(a_1,\dots, a_{n},t)$ is not geodesic. By  \ref{lem:fact1}(b), there exits $i\leq n$ such that $t$ commutes with $a_i\cdots a_n$. Since  $s a_{i+1}\cdots a_n=a_{i+1}\cdots a_n s$ the lemma follows.
\end{proof}

\begin{proof}[Proof of Theorem \ref{thm:centralizers}]
(i) is a restatement of  Brink's result in terms of generators, and it can be found in \cite[Theorem 2.6]{Bahls}. As noticed in  \cite[Theorem 4.10]{BahlsMihalik} this also follows  easily from Lemma \ref{lem:fact1}, Lemma \ref{lem:fact3} and the Deletion Condition.

(ii) is the content of the unpublished  \cite[Theorem 4.10]{BahlsMihalik} and we prove it here for completeness.

Clearly $s$ commutes with all elements of $A(s)$. Then a geodesic word on $A(s)\cup \{s\}$ has at most one $s$, and it can be at any position. We can restrict to show that  a geodesic word on $A(s)$ is also geodesic as word on $S$. 
%We need an auxiliary result.
%We now finish the proof of (ii).
 By Lemma \ref{lem:prefix} any word of length $1$ in $A(s)$ is geodesic on $S$. We argue by induction, suppose that $\word\equiv(a_1,\dots,a_n)$ is  geodesic as a word on $A(s)$, then by induction hypothesis $(a_1,\dots,a_{n-1})$ is geodesic as a word on $S$. If $\word$ is not geodesic as a word on $S$, by Lemma \ref{lem:fact3}, $\word$ is not geodesic on $A(s),$ a contradiction. This completes the proof of (ii).

(iii) By (ii) and the Deletion condition $(C(s),A(s)\cup\{s\})$ is a Coxeter system. We only have to show that  for all $t,r\in \Link(s)$, $r\neq s$ and  all $n\geq 1$ $(a_{t,s}a_{r,s})^n\neq 1$. We argue by contradiction, i.e. there exist some $t,r,n$ such that $(a_{t,s}a_{r,s})^n=1,$ in particular, there exits $m$ such that $(a_{t,s}a_{r,s})^m$ is geodesic in $S$.

If $(a_{t,s}a_{r,s})^ma_{t,s}$ is not geodesic as a word on $S$, then by Lemma \ref{lem:fact1}(a) $(a_{t,s}a_{r,s})^mt$ is not geodesic. By Lemma \ref{lem:fact1}(b) there exists some suffix $(a_{i,s},\dots,a_{r,s})$ that commutes with $t$. Then $(a_{i,s},\dots,a_{r,s})$ is a geodesic word representing some element in $C(t)$, in particular, by (i), $\supp((a_{i,s},\dots,a_{r,s}))\subseteq \Star(t).$ But since $\Star(s)$ spans a tree in $\ga$, $r\notin \Star(t)$, a contradicton.

If $(a_{t,s}a_{r,s})^ma_{t,s}$ is  geodesic in $S$ but $(a_{t,s}a_{r,s})^{m+1}$ is not, a similar argument implies that there is an edge joining $r,t$ in $\ga$, a contradiction. 
\end{proof}

\section*{Acknowledgments}

\footnotesize

The authors would like to thank John Meier and Jon McCammond for early pointers regarding the main question discussed in this paper, and would like to thank Sa\v sa Radomirovic and Claas R\"over for helpful discussions.

Both authors were partially supported by the Marie Curie Reintegration Grant 230889. The first-named author is also supported by  MCI (Spain) through
project MTM2011-25955 and EPSRC through project EP/H032428/1.

\bigskip

\textsc{Y. Antolin, University of Southampton,
Building 54, 
Highfield, Southampton,
SO17 1BJ, United Kingdom}

\emph{E-mail address}{:\;\;}\texttt{Y.Antolin-Pichel@soton.ac.uk}

\medskip

\textsc{L. Ciobanu,
Mathematics Department,
University of Fribourg,
Chemin du Muse\'e 23,
CH-1700 Fribourg, Switzerland
}

\emph{E-mail address}{:\;\;}\texttt{laura.ciobanu@unifr.ch}


\begin{thebibliography}{1}




\bibitem{Bahls}
Patrick Bahls
\newblock{\it The isomorphism problem in Coxeter groups},
\newblock Imperial College Press, London, 2005. xiv+176 pp.

\bibitem{BahlsMihalik}
Patrick Bahls and Michael Mihalik,
\newblock {\it Centralizers of Parabolic Subgroups of even Coxeter groups},
\newblock Preprint.

\bibitem{BjornerBrenti}
Anders Bjorner and Francesco Brenti,
\newblock {\it Combinatorics of Coxeter groups},
\newblock Graduate Texts in Mathematics, 231. Springer, New York, 2005. xiv+363 pp.


\bibitem{Bridsonetc}
Martin Bridson, Jos\'{e} Burillo, Murray Elder and Zoran Suni\'{c},
\newblock {\it On groups whose geometric growth is polynomial}
\newblock  Internat. J. Algebra Comput., to appear. 

\bibitem{Brink}
Birgitte Brink,
\newblock {\it On centralizers of reflections in Coxeter groups},
\newblock Bull. London Math. Soc. {\bf 28} (1996), no. 5, 465--470. 


\bibitem{Charney}
Ruth Charney,
\newblock{\it Artin groups of finite type are biautomatic},
\newblock Math. Ann. {\bf 292} (1992), 671--683.

\bibitem{CharneyDavis}
Ruth Charney and Michael Davis,
\newblock {\it Reciprocity of growth functions of Coxeter groups},
\newblock Geom. Dedicata {\bf 39} 3 (1991), 373--378.

\bibitem{CharneyMeier}
Ruth Charney and John Meier.
\newblock{\it The language of geodesics for Garside groups},
\newblock Math. Z. ~{\bf 248}, (2004), 495--509.

\bibitem{chiswell}  Chiswell, I.M.,
\newblock {\it The growth series of a graph product},
\newblock Bull.~London Math.~Soc.~{\bf 26} (1994),  268--272. 



%\bibitem{Davisbook}
%Michael Davis,
%\newblock The Geometry and Topology of Coxeter Groups.

\bibitem{DromsServatius}
C.~Droms and H.~Servatius.
\newblock{\it The Cayley graphs of Coxeter and Artin groups},
\newblock Proc. of the AMS, 118, 3, (1993), 693--698.

\bibitem{Epsteinetc}
D.B.A. Epstein, J.W. Cannon, D.F. Holt, S.~Levy, M.S. Paterson, and W.P.
  Thurston.
\newblock {\it Word Processing and Group Theory},
\newblock Jones and Bartlett, 1992.

%\bibitem{Hallbook}
%Marshall Hall Jr.
%\newblock {\it Combinatorial Theory},
%\newblock Blaisdell Publishing company, 1967. 

\bibitem{JJJ}
J.~Loeffler, J.~ Meier and J.~Worthington.
\newblock{\it Graph products and {C}annon pairs},
\newblock Internat. J. Algebra Comput. {\bf12} (2002), 6, 747--754.

\bibitem{GS2009}
R.~Glover and R.~Scott.
\newblock{\it Automatic growth series for right angled Coxeter groups},
\newblock  Involve, 2:4 (2009), 370--384.

%\bibitem{G90} Elizabeth Green, Graph products of groups, Ph D thesis,
%University of Leeds, 1990.

\bibitem{GN}
Rostislav Grigorchuck and Tatiant Nagnibeda,
\newblock {\it Complete growth functions of hyperbolic groups},
\newblock Invent. Math. {\bf 130} (1997), no. 1, 159--188.

\bibitem{HoltRees}
 D.F. Holt and S. Rees,
 \newblock{Artin groups of large type are shortlex automatic with regular geodesics},
\texttt{http://arxiv.org/abs/1003.6007}, to appear in Proc. London Math. Soc.


\bibitem{hu}
Hopcroft, J.~and Ullman, J.D.,
Introduction to automata theory, languages, and computation,
Addison-Wesley Series in Computer Science, 
Addison-Wesley Publishing Co., Reading, Mass., 1979.

\bibitem{neushap}  Neumann, W.~and Shapiro, M.,
{\it Automatic structures, rational growth, and 
geometrically finite hyperbolic groups},
Invent.~Math.~{\bf 120} (1995),  259--287. 



\bibitem{NibloReeves98}
Graham Niblo and Laurence Reeves,
\newblock {\it The geometry of cube complexes and the complexity of their fundamental groups},
\newblock Topology {\bf 37} (1998), no. 3, 621-- 633.

%\bibitem{Steimberg}
%R. Steinberg,
%\newblock {\it Endomorphisms of linear algebraic groups.}
%\newblock Mem. Amer. Math. Soc., 80, 1968.




\bibitem{Scott}
Richard Scott,
\newblock {\it Rationality and reciprocity for the greedy normal form of a Coxeter group},
\newblock Trans. Amer. Math. Soc. {\bf 363} (2011), no. 1, 385--415.
\end{thebibliography}
\end{document}